\theoremstyle{plain}
\newtheorem{defi}{Definition}
\newtheorem{thm}{Theorem}[section]
\newtheorem{lem}{Lemma}[section]
\newtheorem{rem}{Remark}[section]
\newtheorem{theoA}{Theorem}
\newtheorem{propA}[theoA]{Proposition}
\newtheorem{lemA}[theoA]{Lemma}
\renewcommand{\l}{\left}
\renewcommand{\r}{\right}
\renewcommand{\o}[1]{\overline{#1}}
\renewcommand{\epsilon}{\varepsilon}
\newcommand{\Id}{\mathrm{Id}}
\newcommand{\Tr}{\mathrm{Tr}}
\DeclareRobustCommand{\p}[1]{%
  \mathpalette\do@cev{#1}%
}
\newcommand{\do@cev}[2]{%
  \fix@cev{#1}{+}%
  \reflectbox{\(\m@th#1\vec{\reflectbox{\)\fix@cev{#1}{-}\m@th#1#2\fix@cev{#1}{+}\(}}\)}%
  \fix@cev{#1}{-}%
}
\newcommand{\fix@cev}[2]{%
  \ifx#1\displaystyle
    \mkern#23mu
  \else
    \ifx#1\textstyle
      \mkern#23mu
    \else
      \ifx#1\scriptstyle
        \mkern#22mu
      \else
        \mkern#22mu
      \fi
    \fi
  \fi
}
\newcommand{\R}{\mathbb{R}}
\newcommand{\calF}{\mathcal{F}}
\newcommand{\G}{\mathcal{G}}
\newcommand{\calE}{\mathcal{E}}
\newcommand{\E}{\mathbb{E}}
\renewcommand{\P}{\mathbb{P}}
\newcommand{\ind}{\mathds{1}}
\renewcommand{\em}{}
\title{A multi-dimensional version of Lamperti's relation and the Matsumoto-Yor processes}
\author{Thomas \textsc{Gerard}, Valentin \textsc{Rapenne}, Christophe \textsc{Sabot} and Xiaolin \textsc{Zeng}}
\begin{document}

\maketitle

\begin{abstract}
{The distribution of a one-dimensional drifted Brownian motion conditioned on its first hitting time to 0 is the same as a three-dimensional Bessel bridge. By applying the time change in Lamperti's relation to this result, Matsumoto and Yor \cite{MY01} showed a relation between Brownian motions with opposite drifts. In two subsequent papers \cite{matyor3,matyor2}, they established a geometric lifting of the process 2M-B in Pitman's theorem, known as the Matsumoto--Yor processs. They also established an equality in law involving Inverse Gaussian distribution and its reciprocal (as processes), known as the Matsumoto--Yor property, by conditioning some exponential Wiener functional.

In \cite{SabZenEDS}, Sabot and Zeng generalized the result on drifted Brownian motion conditioned on its first hitting. More precisely, they introduced a family of Brownian semimartingales with interacting drifts, for which when conditioned on the vector \(\tau\) (the hitting times to \(0\) of each component), their joint law is the same as for independent three-dimensional Bessel bridges. The distribution of \(\tau\) is a generalization of Inverse Gaussian distribution in multi-dimension and it is related to a random potential \(\beta\) that appears in the study of the Vertex Reinforced Jump Process.

The aim of this paper is to generalize some results in \cite{MY01,matyor3,matyor2} in the context of interacting Brownian martingales. We apply a Lamperti-type time change to the previous family of interacting Brownian motions and we obtain a multi-dimensional opposite drift theorem. Moreover, we also give a multi-dimensional counterpart of the Matsumoto--Yor process and it's intertwining relation with interacting geometric Brownian motions.}
\end{abstract}

\section{Introduction}
We start with a discussion on several classical properties of one-dimensional Brownian motion, for which we provide their multi-dimensional counterparts. Let \(\theta>0\) and \(\eta\geq 0\) be fixed and let \((B(t))_{t\geq 0}\) be a standard \(1\)-dimensional Brownian motion. Further, let us consider the drifted brownian motion
\[(X(t))_{t\geq 0}=(\theta +B(t)-\eta t)_{t\geq 0}.\]
Then, by \cite{W74,V91}, the distribution of the first hitting time \(\tau\) of \(0\) by \(X\) has a density which is known explicitly and conditionally on \(\tau\), \((X(t))_{0\leq t\leq \tau}\) is a \(3\)-dimensional Bessel bridge. We will give more details in Proposition \ref{prop-1d-bmhit}.

Another important result is the Matsumoto-Yor opposite drift theorem~\cite{MY01}. This theorem concerns a Brownian motion with negative drift \(-\mu\) which, when conditioned on some exponential functional of its sample path, can be represented as a Brownian motion with opposite drift \(\mu\), with an additional explicit corrective term. A version of this result is stated in Theorem \ref{thm:opp.dr}, in the special case where \(\mu=\frac{1}{2}\). One proof of the theorem relies on applying Lamperti's relation to the classic result on hitting times of Brownian motion. Lamperti's relation, presented in Proposition \ref{prop:lamperti}, provides a way to write any Bessel process with index \(\mu\) as the exponential of a time-changed Brownian motion with drift \(\mu\).

In \cite{matyor1,matyor3,matyor2}, Matsumoto and Yor proved some fundamental results concerning exponential functional of the Brownian motion. More precisely, if \(B\) is a standard Brownian motion and \(\mu\in\R\), let us define for every \(t>0\),
\[e_t^{(\mu)}=\exp(B_t+\mu t), \hspace{0.1 cm }A_t^{(\mu )}=\int_0^t(e_s^{(\mu)})^2ds  \text{ and }Z_t^{(\mu)}=A_t^{(\mu)}/e_t^{(\mu)}.\]
Then, the conditional distribution of \(e_t^{(\mu)}\) knowing \(Z_t^{(\mu)}=z\) is known explicitely and is a generalized Inverse Gaussian distribution.  Moreover, \(Z\) is a diffusion with respect to a filtration which is strictly smaller than the filtration of \(B\). These results imply a property of intertwinnings between the processes \(e\) and \(Z\).  While somehow mysterious at first sight, the Matsumoto-Yor properties have deep generalizations in the context of Lie groups and solvable polymers (see e.g. \cite{oconnel-Yor,oconnel,BBOI,BBOII,Chhaibi,Bourgade_review}), the classical Matsumoto-Yor properties being related to $sl_2(\R)$. In this paper we present a generalization of Matsumoto-Yor properties of a different type, based on lattice interactions rather than on general Lie groups.

In \cite{SabZenEDS}, Sabot and Zeng gave a multivariate version of the drifted Brownian motion \(X\) presented above. This multivariate version concerns a family of Brownian motions indexed by a finite graph with interacting drifts, defined as the solution of a system of stochastic differential equations (SDEs). They proved that the inverse of the hitting times of \(0\) for this family have the law of a random potential, which we denote \(\beta\), introduced by Sabot, Tarrès and Zeng in \cite{SabTarZen} and generalized in \cite{Let19}. That random potential is closely related to the supersymmetric hyperbolic sigma model studied by Disertori, Spencer and Zirnbauer in \cite{DisSpeZir} and \cite{Disertori2010}, and is central in the analysis of the Vertex Reinforced Jump Process in \cite{SabTar}, \cite{SabTarZen} and \cite{SabZen}. (See also \cite{Lupu2019a,Bauerschmidt2019b,Bauerschmidt2019,Collevecchio2018,Chang2019,Merkl2019a} for related discussions in statistical mechanics and random operators.) This paper and \cite{SabZenEDS} show that by many aspects, the distribution of the hitting times of these drifted interacting Brownian motions can be interpreted as a multivariate version of the Inverse Gaussian distribution.

The goal of this paper is two-fold:
\begin{enumerate}
\item Our first goal is to obtain a multi-dimensional version of the opposite drift theorem, by applying an analogue of Lamperti's time change to the family of interacting Brownian motions given in Theorem \ref{thm:SDE.X}. Difficulties arise in applying a time change to the interaction term, since the time change is different on each coordinate of the process. We can overcome this problem in two different ways : either by using the mixture representation given in Theorem \ref{thm:SDE.X}, and applying the time change to each independent 3-dimensional Bessel bridge ; or by using a form of strong Markov property verified by these interacting Brownian motions (c.f. Theorem \ref{thm:Markov.X} or Theorem 2 in \cite{SabZenEDS}).
\item
Another goal of this paper is to prove a multidimensional version of the conditional Matsumoto--Yor property for \(\mu=-\frac{1}{2}\). The proof of this property uses the multidimensional version of the opposite drift theorem. We also provide some intertwinnings and identities in law which generalize one-dimensional results of Matsumoto and Yor in \cite{matyor1}, \cite{matyor2} and \cite{matsumoto2003interpretation}. 
\end{enumerate}
\paragraph{Organisation of the paper:}  Section \ref{sec:opp.dr.lamp} recall some results for one-dimensional Brownian motion. In section \ref{subsec:recalls}, we list previous results of Sabot, Zeng and Tarrès on the random potential \(\beta\) and its realization as the limit of interacting Brownian motions. In Section \ref{subsec:multid}, we give multi-dimensional counterparts of the results of section \ref{sec:opp.dr.lamp}.  Two open questions are discussed in Section \ref{subsec:twoo}. The remaining sections are devoted to the proofs.
\section{Context and statement of the results}
\label{sec2}
\subsection{Results in dimension one}\label{sec:opp.dr.lamp}
We first recall a classical result about hitting time of drifted Brownian motion.
\begin{propA}
\label{prop-1d-bmhit}
Let \(\theta>0\) and \(\eta\geq 0\) be fixed, and let \(B=(B(t))_{t\ge 0}\) be a standard one-dimensional Brownian motion. We define the  Brownian motion \(X=(X(t))_{t\ge 0}\) with drift \(\eta\) by
\[X(t)=\theta + B(t) - \eta t,\ t\ge 0.\]
If \(\tau\) is the first hitting time of \(0\) by \(X\), i.e.
\begin{equation}
\label{equation-T0}
\tau=\inf\{t\ge 0,\ B(t)+\theta-\eta t=0\},
\end{equation}
then the distribution of \(\tau\) is given by  an Inverse Gaussian distribution \(IG( \frac{\theta}{\eta},\theta^2)\), its density is
\begin{equation}\label{T.dist}
\frac{\theta}{\sqrt{2\pi t^3}}\exp\l( -\frac{1}{2}  \frac{(\theta - \eta t)^2}{t}  \r)\ind_{\{t\geq 0\}}dt.
\end{equation}
Moreover, conditionally on \(\tau\), \(\big( X(t) \big)_{0\leq t\leq \tau}\) has the same distribution as a \(3\)-dimensional Bessel bridge from \(\theta\) to \(0\) on the time interval \([0,\tau]\).
\end{propA}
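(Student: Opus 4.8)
The plan is to derive Proposition~\ref{prop-1d-bmhit} from the driftless case $\eta=0$ by an absolute-continuity argument, and to treat the driftless case through the classical description of the three-dimensional Bessel process as the Doob $h$-transform of Brownian motion killed at $0$ by the harmonic function $h(x)=x$. First I would reduce to $\eta=0$. Let $\P_0$ be the law under which $X$ is a standard Brownian motion started at $\theta$, and $\P_\eta$ the law of $X$ in the statement, both carried by the canonical filtration $(\calF_t)$ of $X$. Girsanov's theorem gives $\frac{d\P_\eta}{d\P_0}\big|_{\calF_t}=\exp\l(-\eta(X(t)-\theta)-\tfrac12\eta^2 t\r)$. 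Applying optional stopping at $\tau$ (a routine localization, using $\P_0(\tau<\infty)=\P_\eta(\tau<\infty)=1$) together with $X(\tau)=0$ on $\{\tau<\infty\}$, one obtains
\[
\frac{d\P_\eta}{d\P_0}\Big|_{\calF_\tau}=\exp\l(\eta\theta-\tfrac12\eta^2\tau\r).
\]
Since the right-hand side depends on $\calF_\tau$ only through $\tau$, this simultaneously shows that the $\P_\eta$-law of $\tau$ is the $\P_0$-law of $\tau$ reweighted by $\exp(\eta\theta-\tfrac12\eta^2 t)$, and that the conditional law of $(X(s))_{0\le s\le\tau}$ given $\tau$ is \emph{the same} under $\P_\eta$ and under $\P_0$. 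Under $\P_0$ the reflection principle (or optional stopping with $\exp(\lambda X(t)-\tfrac12\lambda^2t)$) gives the classical first-passage density $\rho(v,x)=\dfrac{x}{\sqrt{2\pi v^3}}\,e^{-x^2/(2v)}$ for the hitting time of $0$ from $x>0$; in particular $\tau$ has density $\rho(\cdot,\theta)$ under $\P_0$. Reweighting and simplifying $-\tfrac{\theta^2}{2t}+\eta\theta-\tfrac{\eta^2t}{2}=-\tfrac12\tfrac{(\theta-\eta t)^2}{t}$ gives exactly \eqref{T.dist}, which one checks is the density of $IG(\theta/\eta,\theta^2)$; that it has total mass $1$ (equivalently $\P_\eta(\tau<\infty)=1$) is a Gaussian integral.

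It then remains to identify, for $\eta=0$, the conditional law of $(X(s))_{0\le s\le\tau}$ given $\tau=t$. By the Markov property of $X$, for bounded $F,g$ and every $u>0$,
\[
\E_0\l[F\bigl((X(s))_{s\le u}\bigr)\ind_{\{u<\tau\}}\,g(\tau)\r]=\int_0^\infty g(t)\,\E_0\l[F\bigl((X(s))_{s\le u}\bigr)\ind_{\{u<\tau\}}\,\rho\bigl(t-u,X(u)\bigr)\r]dt,
\]
so that, conditionally on $\tau=t$, the restriction to $[0,u]$ of $(X(s))_{s\le\tau}$ has density $\rho(t-u,X(u))/\rho(t,\theta)$ with respect to $\P_0$ restricted to $\{u<\tau\}$. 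On the other hand, the Bessel$(3)$ process started at $\theta$ has law $\frac{X(u)}{\theta}\ind_{\{u<\tau\}}\,d\P_0$ on $\calF_u$, and the Bessel$(3)$ bridge from $\theta$ to $0$ on $[0,t]$ multiplies this on $\calF_u$ by $\widehat q_{t-u}(X(u))/\widehat q_t(\theta)$, where $\widehat q_v(x)=\lim_{y\to0}y^{-2}q_v(x,y)$ and $q_v(x,y)=\tfrac{y}{x}\bigl(p_v(x-y)-p_v(x+y)\bigr)$ is the Bessel$(3)$ transition density ($p_v$ the heat kernel); a Taylor expansion yields $\widehat q_v(x)=\sqrt{2/(\pi v^3)}\,e^{-x^2/(2v)}$. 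A direct computation then shows
\[
\frac{X(u)}{\theta}\cdot\frac{\widehat q_{t-u}(X(u))}{\widehat q_t(\theta)}=\frac{\rho(t-u,X(u))}{\rho(t,\theta)},
\]
so the two laws agree on $\calF_u$ for every $u<t$, hence coincide; this is precisely the asserted Bessel-bridge description, and combined with the first paragraph it finishes the case of general $\eta\ge 0$.

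I expect the substantive point to be this last step: making rigorous the conditioning on the null event $\{\tau=t\}$ and the pinning construction for the Bessel$(3)$ process — in particular, justifying that agreement of the two path laws on all $\calF_u$, $u<t$, forces equality of the bridge laws, and fixing the correct entrance-law normalization $\widehat q_v$ at the endpoint $0$ (which is entrance-but-not-exit for Bessel$(3)$). By contrast, Girsanov, optional stopping and the reflection-principle density are routine. As an alternative, the last step can be replaced by an appeal to Williams' time-reversal theorem for Brownian motion killed at $0$, at the cost of importing that result.
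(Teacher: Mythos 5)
Your proof is correct and genuinely more detailed than what the paper offers. The paper dispatches Proposition~\ref{prop-1d-bmhit} in one line: the density \eqref{T.dist} is called ``a classical easy computation'', and the Bessel-bridge description is obtained simply by citing Williams' path decomposition (\cite{W74}~Theorem~3.1, or \cite{RY13}~p.~255, Theorem~(3.11)). You instead give a self-contained derivation: first, the Girsanov tilt reduces everything to $\eta=0$, and because the Radon--Nikodym density on $\calF_\tau$ is a function of $\tau$ alone, it transparently shows that the \emph{conditional} law of the path given $\tau$ is unchanged as $\eta$ varies --- a clean structural observation the paper does not spell out; second, you identify that conditional law at $\eta=0$ as the BES$(3)$ bridge by combining the Doob $h$-transform with $h(x)=x$, the entrance-law normalization $\widehat q_v(x)=\lim_{y\to 0}y^{-2}q_v(x,y)=\sqrt{2/(\pi v^3)}\,e^{-x^2/(2v)}$, and the Markov-property factorization of the hitting density. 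The algebraic identity $\frac{X(u)}{\theta}\cdot\frac{\widehat q_{t-u}(X(u))}{\widehat q_t(\theta)}=\frac{\rho(t-u,X(u))}{\rho(t,\theta)}$ checks out, as do your formulas for $q_v$, $\widehat q_v$ and the first-passage density. What your route buys is independence from Williams' theorem, at the cost of carrying out the bridge/entrance-law bookkeeping that you correctly flag as the only non-routine step (conditioning on the null event $\{\tau=t\}$ and the limit $y\to 0$ at the entrance boundary); what the paper's citation buys is brevity. You yourself note the Williams time-reversal shortcut as an alternative --- that is in fact the route the paper takes.
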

The first part is a classical easy computation, the last statement is a consequence of Williams' decomposition, see \cite{W74}~Theorem 3.1 or \cite{RY13}~p. 255, Theorem (3.11). In particular, when \(\eta=0\), \textit{i.e.} \(X\) is a Brownian motion without drift, then \eqref{T.dist} is the density of inverse Gamma, i.e. it has the distribution of \(\frac{1}{2\gamma}\), where \(\gamma\) is a Gamma random variable with parameter \((\frac{1}{2},\theta^2)\). 

Now, let us present a version of the Matsumoto-Yor opposite drift theorem from \cite{MY01}, in the specific case where the drift \(\mu\) is \(\frac{1}{2}\), with an additional boundary term depending on \(\eta\).

\begin{theoA}\label{thm:opp.dr}[Theorem 2.2 and Proposition 3.1 in \cite{MY01}]
Let \(\theta>0\) and \(\eta\geq 0\) be fixed, and let \(B\) be a standard one-dimensional Brownian motion. We define the process \(\rho\) as the solution of the following SDE :
\begin{equation}\label{eq-rho-u}
\begin{aligned}
\rho(u)= \log(\theta) + B(u) - \frac{1}{2}u -\int_0^u \eta e^{\rho(v)}dv
\end{aligned}
\end{equation}
for \(u\geq 0\). Define \(T(u)=\int_0^u e^{2\rho(v)}dv\).
\begin{itemize}
\item[(i)] We have 
\[
T(u)\xrightarrow[u\to\infty]{a.s.} T^{\infty},
\]
where \(T^{\infty}\) has the same distribution as \(\tau\) in Proposition \ref{prop-1d-bmhit}: it is distributed according to 
\[
\frac{\theta}{\sqrt{2\pi t^3}}\exp\l( -\frac{1}{2}  \frac{(\theta - \eta t)^2}{t}  \r)\ind_{\{t\geq 0\}}dt.
\]
\item[(ii)] Conditionally on \(T^{\infty}\), there exists a standard one-dimensional Brownian motion \(B^*\) such that for \(u\geq 0\),
\begin{equation}\label{eq-rho-u-cond}
\begin{aligned}
\rho(u)=\log(\theta)+B^*(u)+\frac{1}{2}u+\log\left( \frac{T^{\infty}-T(u)}{T^{\infty}} \right) .
\end{aligned}
\end{equation}
\end{itemize}
\end{theoA}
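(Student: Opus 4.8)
The plan is to transport Proposition~\ref{prop-1d-bmhit} through the Lamperti-type time change $u\mapsto T(u)=\int_0^u e^{2\rho(v)}\,dv$, running it forward to prove (i) and backward to prove (ii). First, for (i), I would note that \eqref{eq-rho-u} has a global solution: the interaction term $-\eta e^{\rho(v)}$ is $\le 0$, so it cannot cause an explosion to $+\infty$, and it is negligible near $-\infty$, so there is no explosion to $-\infty$ either; moreover $\rho(u)\le\log\theta+B(u)-u/2\to-\infty$ a.s., whence $e^{2\rho(u)}\le\theta^2e^{2B(u)-u}$ is integrable on $[0,\infty)$ and $T(u)\uparrow T^\infty:=\int_0^\infty e^{2\rho(v)}\,dv<\infty$ a.s. Writing \eqref{eq-rho-u} in integral form and applying It\^o's formula to $e^{\rho}$,
\[
e^{\rho(u)}=\theta+\int_0^u e^{\rho(v)}\,dB(v)-\eta\int_0^u e^{2\rho(v)}\,dv ,
\]
I recognize the second integral as $T(u)$ and, by the Dambis--Dubins--Schwarz theorem, the first as $\tilde B(T(u))$ for a standard Brownian motion $\tilde B$ (extended to $[0,\infty)$). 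Since $T$ is a continuous increasing bijection of $[0,\infty)$ onto $[0,T^\infty)$, the process $X(t):=e^{\rho(T^{-1}(t))}$ satisfies $X(t)=\theta+\tilde B(t)-\eta t$ on $[0,T^\infty)$, stays positive there and tends to $0$ as $t\uparrow T^\infty$; hence $T^\infty$ is exactly the first hitting time of $0$ by the drifted Brownian motion $\theta+\tilde B(\cdot)-\eta(\cdot)$. Proposition~\ref{prop-1d-bmhit} then yields the claimed Inverse Gaussian law, which is (i), and further tells us that, conditionally on $T^\infty$, the path $(X(t))_{0\le t\le T^\infty}$ is a $3$-dimensional Bessel bridge from $\theta$ to $0$.

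For (ii) I would run the same change of variables backward on this bridge. Observe first that $T$ is a measurable functional of $X$: differentiating shows $\frac{d}{du}\int_0^{T(u)}X(s)^{-2}\,ds=1$, so $T$ is the inverse of $t\mapsto u(t):=\int_0^t X(s)^{-2}\,ds$ and $\rho(u)=\log X(T(u))$. I would therefore condition on $\tau=T^\infty=t_0$ and work with the Bessel$(3)$ bridge from $\theta$ to $0$ on $[0,t_0]$ furnished by Proposition~\ref{prop-1d-bmhit}. On $[0,t_0)$ this bridge is a Brownian semimartingale whose drift is obtained from the Bessel$(3)$ drift by the space--time Doob transform with $h(t,x)\propto(t_0-t)^{-3/2}\exp(-x^2/2(t_0-t))$ (the $x$-dependence of $q^{(3)}_{t_0-t}(x,0^+)$), so that
\[
dX(t)=d\beta(t)+\Big(\tfrac1{X(t)}-\tfrac{X(t)}{t_0-t}\Big)\,dt,\qquad X(0)=\theta,
\]
for a Brownian motion $\beta$. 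Applying It\^o's formula to $\log X$ and the time change $dt=X(T(u))^2\,du$ (composing with $T$, the inverse of $u(\cdot)$, which increases to $+\infty$ as $t\uparrow t_0$ since $X(t)$ vanishes like $\sqrt{t_0-t}$ there), I obtain
\[
d\rho(u)=dB^*(u)+\tfrac12\,du-\frac{X(T(u))^2}{t_0-T(u)}\,du,
\]
where $B^*(u):=\int_0^u X(T(v))^{-1}\,d\beta(T(v))$ has quadratic variation $u$ and hence is a standard Brownian motion by L\'evy's characterization. Finally, from $X(T(u))^2=T'(u)$ and $T^\infty=t_0$ one gets $\frac{X(T(u))^2}{t_0-T(u)}=-\frac{d}{du}\log(T^\infty-T(u))$; integrating from $0$, where $\rho(0)=\log\theta$ and $T(0)=0$, gives exactly \eqref{eq-rho-u-cond}.

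The main obstacle is the bookkeeping that ties the two directions of the time change together with the conditioning: one has to verify that the quantity $T^\infty$ produced from $\rho$ coincides with the hitting time $\tau$ of Proposition~\ref{prop-1d-bmhit} and with the bridge length $t_0$, that the additive functional $u(\cdot)$ is onto $[0,\infty)$, and that $\beta$, and then $B^*$, are genuine Brownian motions in the relevant conditional filtration — the last point requiring, as usual for bridges, that one works on the open interval $[0,t_0)$ and uses the semimartingale decomposition of the Bessel$(3)$ bridge there. By contrast, extracting the $h$-transform drift from the near-zero asymptotics of the Bessel$(3)$ kernel, and the It\^o and time-change manipulations themselves, are routine.
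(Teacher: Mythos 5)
Your argument is correct and uses exactly the same ingredients the paper does: the Lamperti time change, It\^o's formula on $e^\rho$ and $\log X$, the Dambis--Dubins--Schwarz/L\'evy identification of the time-changed martingale as Brownian motion, and Proposition~\ref{prop-1d-bmhit}'s Bessel$(3)$ bridge decomposition. The only difference is organizational: you start from a solution $\rho$ of \eqref{eq-rho-u} and recover the drifted Brownian motion $X$ by inverting the time change, whereas the paper's sketch (and its Lemma~\ref{lem:lim.u}, whose computation yields precisely your display for part (ii)) runs the time change forward from $X$ and appeals to pathwise uniqueness of \eqref{eq-rho-u} to identify the law of $\rho$.
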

Note that Theorem \ref{thm:opp.dr} holds for any \(\mu >0\), except that the law of \(T^{\infty}\) needs to be adjusted accordingly.
One proof of Theorem \ref{thm:opp.dr} relies on applying a time change to the  result on hitting times of the Brownian motion introduced in Proposition \ref{prop-1d-bmhit}. We sketch this proof now. The relevant time change is the one that appears in Lamperti's relation, presented below (see e.g. \cite{RY13} p.452) :

\begin{propA}\label{prop:lamperti}[Lamperti's relation]
Let \((\rho(u))_{u\geq 0}\) be a drifted Brownian motion with drift \(\mu\in\R\). For \(u \geq 0\), define
\[
T(u)=\int_0^u \exp(2\rho(v))dv.
\]
Then there exists a Bessel process \((X(t))_{t\geq 0}\) with index \(\mu\), starting from \(1\), such that for \(u\geq 0\),
\[
e^{\rho(u)}=X(T(u)).
\]
\end{propA}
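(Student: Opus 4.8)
The plan is to \emph{define} the candidate Bessel process by inverting the time change, and then to verify via It\^o's formula that it satisfies the defining SDE of a Bessel process with index $\mu$. First I would observe that since $T(u)=\int_0^u e^{2\rho(v)}\,dv$ is strictly increasing and continuous with $T(0)=0$, it admits a continuous strictly increasing inverse $A=T^{-1}$ defined on $[0,T^\infty)$ where $T^\infty=\lim_{u\to\infty}T(u)\in(0,\infty]$. I would then set $X(t)=e^{\rho(A(t))}$ for $t\in[0,T^\infty)$, so that by construction $X(0)=e^{\rho(0)}=1$ and $e^{\rho(u)}=X(T(u))$ for all $u\ge 0$, which is the desired identity; the remaining work is to identify the law of $X$.

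The key computation is to apply It\^o's formula to $X(t)=e^{\rho(A(t))}$. Since $A'(t)=1/T'(A(t))=e^{-2\rho(A(t))}=X(t)^{-2}$, the time-changed Brownian motion $\beta(t):=\int_0^{A(t)} e^{\rho(v)}\,dB(v)$ is, by the Dambis--Dubins--Schwarz theorem, a standard Brownian motion (its quadratic variation is $\int_0^{A(t)}e^{2\rho(v)}\,dv=T(A(t))=t$). Writing $Y(u)=e^{\rho(u)}$, It\^o's formula and the SDE $d\rho(u)=dB(u)+\mu\,du$ give $dY(u)=Y(u)\,dB(u)+(\mu+\tfrac12)Y(u)\,du$; substituting $u=A(t)$ and using $dA(t)=X(t)^{-2}\,dt$ yields
\[
dX(t)=d\beta(t)+\Bigl(\mu+\tfrac12\Bigr)\frac{1}{X(t)}\,dt
     =d\beta(t)+\frac{2\mu+1-1}{2}\cdot\frac{1}{X(t)}\,dt
     =d\beta(t)+\frac{\delta-1}{2}\,\frac{dt}{X(t)},
\]
with $\delta=2\mu+1$, which is exactly the SDE for a $\delta$-dimensional Bessel process, i.e.\ a Bessel process of index $\mu$, started at $X(0)=1$. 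On the event $\{T^\infty<\infty\}$ (which can occur when $\mu\le 0$), one checks that $X(t)\to 0$ as $t\uparrow T^\infty$ and extends $X$ by $0$ (resp.\ reflects/absorbs according to the standard convention for Bessel processes), so that $X$ is defined on all of $[0,\infty)$.

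The main obstacle is the careful handling of the time change and of the point $t=T^\infty$: one must verify that $\beta$ is genuinely a Brownian motion on the \emph{full} half-line (not merely up to $T^\infty$) — which on $\{T^\infty<\infty\}$ requires enlarging the probability space to adjoin an independent Brownian motion past the explosion time of the clock — and that the extended process $X$ still solves the Bessel SDE globally and does not instantaneously leave $0$ in a way incompatible with the Bessel process for the given index. For $\mu\ge 0$ one has $T^\infty=\infty$ a.s.\ and these complications disappear, so I would treat that case first and then indicate the modification for $\mu<0$. Everything else is a routine application of It\^o's formula and the Dambis--Dubins--Schwarz theorem.
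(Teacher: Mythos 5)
The paper does not actually prove Proposition~\ref{prop:lamperti}: it cites it as a classical fact from \cite{RY13}, p.~452, and only uses the (inverse) computation later (e.g.\ in the proof of Lemma~\ref{lem:lim.u}, where a Bessel bridge is transformed into a time-changed Brownian motion via It\^o's formula). Your argument is the standard textbook proof in the forward direction (time-change plus Dambis--Dubins--Schwarz plus It\^o), so it is entirely consistent with what the authors have in mind and with the computations they carry out elsewhere in the paper.

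One concrete error in your write-up: the drift coefficient you derive, $\bigl(\mu+\tfrac12\bigr)X(t)^{-1}$, corresponds to a Bessel process of \emph{dimension} $\delta = 2\mu+2$, not $\delta = 2\mu+1$ as you wrote; indeed $\tfrac{\delta-1}{2}=\mu+\tfrac12$ forces $\delta=2\mu+2$, and the arithmetic step $\mu+\tfrac12=\tfrac{2\mu+1-1}{2}$ is wrong (the right-hand side equals $\mu$). Your final conclusion is nevertheless correct, since a Bessel process of dimension $\delta=2\mu+2$ has index $\nu=\tfrac{\delta}{2}-1=\mu$, which is what the statement requires; only the stated value of $\delta$ and the intermediate equality need fixing. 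The remarks about enlarging the probability space when $T^\infty<\infty$ (i.e.\ $\mu<0$) and about the behaviour at the explosion time are appropriate and correctly flagged as the only delicate points.
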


To prove Theorem \ref{thm:opp.dr} using this time change,  we use the same notations as in the introduction. Fix \(\theta>0\), \(\eta>0\), and let \(X(t)=\theta+ B(t) +\eta t\) where \(B\) is a standard Brownian motion. Let \(U(t)=\int_0^t \frac{1}{X(s)^2}ds\), and denote \(T=U^{-1}\). Note that \(T\) is the analogue of the time change featured in Lamperti's relation, where \(X\) plays the role of a drifted Bessel process with index \(-\frac{1}{2}\).

If \(\rho(u)=\log(X(T(u)))\), then by Itô formula, \(\rho\) has the distribution as the solution of Equation \eqref{eq-rho-u}. Moreover, when \(u\to\infty\), by definition \(T(u)\to \tau\), where \(\tau\) is the first hitting time of \(0\) by \(X\). By Proposition \ref{prop-1d-bmhit}, conditionally on \(\tau\), \(X\) has the distribution of a \(3\)-dimensional Bessel bridge, \textit{i.e.} a Bessel bridge with index \(\frac{1}{2}\), using this fact in applying Itô formula, we found that the conditional law of \(\rho\)  equals the law of the solution to Equation \eqref{eq-rho-u-cond}.

\begin{rem}
In \cite{MY01}, the opposite drift theorem is stated in a different form, where \(\eta=0\) and the drift \(\mu\) can be different from \(\frac{1}{2}\). Its proof still relies on applying Lamperti's relation, but this time to a result concerning hitting times of Bessel processes with any index \(-\mu\) (see \cite{L18} and \cite{PY81}).
\end{rem}

Now, let us recall the conditional Matsumoto-Yor property  \cite{matyor1,matyor3,matyor2}. For every \(\alpha\in\R\), \(K_{\alpha}\) is the modified Bessel function of the second kind with index \(\alpha\).
\begin{theoA}\label{thm:matyorcond}
Let \(\mu\in\R\). Let \(B\) be a Brownian motion. For every \(t>0\), define
\[e_t^{(\mu)}=\exp(B_t+\mu t), \hspace{0.1 cm }A_t^{(\mu )}=\int_0^t(e_s^{(\mu)})^2ds \text{ and }Z_t^{(\mu)}=A_t^{(\mu)}/e_t^{(\mu)}.\]
Let \((\mathcal{Z}_t^{(\mu)})_{t\geq 0})\) be the natural filtration associated with the process \((Z_t^{(\mu)})_{t\geq 0}\).
\begin{enumerate}[(i)]
\item For every \(t>0\), \[\mathcal{Z}_t^{(\mu)}\subsetneqq\sigma(B_s,s\leq t).\]
\item  There exists a Brownian motion \(\tilde{B}\) such that \((Z_t^{(\mu)})_{t\geq 0}\) is solution of the SDE
\[dZ_t^{(\mu)}=Z_t^{(\mu)}d\tilde{B}_t+\left(\frac{1}{2}-\mu\right)Z_t^{(\mu)}dt+\frac{K_{1+\mu}}{K_{\mu}}\left(\frac{1}{Z_t^{(\mu)}} \right)dt.\]
\item \label{iii-MY} For any \(t>0\),
\[\P(e_t^{(\mu)}\in dx|\mathcal{Z}_t^{(\mu)},Z_t^{(\mu)}=z)=\frac{x^{\mu-1}}{2K_{\mu}(1/z)}\exp\left(-\frac{1}{2z}\left( \frac{1}{x}+x\right) \right)dx.\]
\end{enumerate}
\end{theoA}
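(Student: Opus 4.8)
The plan is to deduce all three statements from the opposite drift theorem (Theorem \ref{thm:opp.dr}), just as the introduction suggests, and then read off the conditional law, the filtration comparison, and the SDE for $Z$ in turn. The key observation is that the process $\rho$ in \eqref{eq-rho-u} is, by Lamperti's relation, a logarithmic time change of a drifted Brownian/Bessel-type process, and that $T(u)=\int_0^u e^{2\rho(v)}\,dv$ is exactly the exponential functional $A^{(\mu)}$ after reparametrization. Concretely, I would set $\mu=-\tfrac12$ and $\eta=0$: then \eqref{eq-rho-u} with $\theta=1$ reads $\rho(u)=B(u)-\tfrac12 u$, so $e^{\rho(u)}=e^{(\mu)}_u$ with $\mu=-\tfrac12$, $T(u)=A^{(\mu)}_u$, and the ratio appearing in \eqref{eq-rho-u-cond}, namely $e^{\rho(u)}\cdot\frac{T^\infty}{T^\infty-T(u)}\cdot(\text{const})$, is up to constants $Z^{(\mu)}_u=A^{(\mu)}_u/e^{(\mu)}_u$; I will make this dictionary precise. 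The general $\mu$ case then follows by the remark after Theorem \ref{thm:opp.dr} that the opposite drift theorem holds for any $\mu$ once the law of $T^\infty$ is adjusted (this amounts to replacing the Inverse Gaussian / inverse Gamma law of $\tau$ by the appropriate hitting-time law of a Bessel process of index $-\mu$, cf.\ the cited remark), so I may either argue directly for all $\mu$ or prove $\mu=-\tfrac12$ and indicate the modifications.

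For part \eqref{iii-MY}, the strategy is: by Theorem \ref{thm:opp.dr}(i), $T(u)\to T^\infty$ with $T^\infty$ distributed as the relevant hitting time (inverse Gamma when $\mu=-\tfrac12$, $\eta=0$); by part (ii) of that theorem, conditionally on $T^\infty$ the process $\rho$ has an explicit law in which $e^{\rho(u)}$ is expressed through $B^*$ and $T^\infty-T(u)$. The point is that the conditional law of $e^{\rho(u)}$ given the pair $(T^\infty, \text{path of } T(\cdot) \text{ up to } u)$ is degenerate once we also know $T^\infty$ and $T(u)$, but the law of $e^{\rho(u)}$ given only $T(u)=$ (a fixed value, equivalently $Z$-information) is obtained by integrating out $T^\infty$ against its conditional density given $T(u)$. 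So I would: (a) write the joint density of $(e^{\rho(u)}, T^\infty)$ — or equivalently of $(e^{(\mu)}_t, A^{(\mu)}_t)$ — using the explicit hitting-time density \eqref{T.dist} together with the Bessel-bridge description, or alternatively using known formulas for the joint law of $(e^{(\mu)}_t, A^{(\mu)}_t)$; (b) change variables to $(x,z)=(e^{(\mu)}_t, A^{(\mu)}_t/e^{(\mu)}_t)$; (c) integrate out to get the marginal density of $z$, recognizing the normalizing constant $2K_\mu(1/z)$ via the integral representation $K_\mu(a)=\tfrac12\int_0^\infty x^{\mu-1}\exp(-\tfrac a2(x+1/x))\,dx$; the conditional density is then the ratio, which is precisely the claimed GIG density. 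I expect the appearance of $K_\mu$ to be clean because that Bessel-function integral is exactly the $x$-integral that shows up.

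For part (ii), the SDE for $Z^{(\mu)}$, I would compute $dZ^{(\mu)}_t$ by Itô's formula from $Z^{(\mu)}_t = A^{(\mu)}_t e^{-(B_t+\mu t)}$, using $dA^{(\mu)}_t=(e^{(\mu)}_t)^2\,dt$ and $d e^{(\mu)}_t = e^{(\mu)}_t\,dB_t+(\mu+\tfrac12)e^{(\mu)}_t\,dt$; this gives $dZ^{(\mu)}_t = -Z^{(\mu)}_t\,dB_t + (\tfrac12-\mu)Z^{(\mu)}_t\,dt + e^{(\mu)}_t\,dt$, and the bracket shows $dZ^{(\mu)}_t = -Z^{(\mu)}_t\,dB_t+(\tfrac12-\mu)Z^{(\mu)}_t\,dt + e^{(\mu)}_t\,dt$. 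The nontrivial step is to replace the non-adapted-looking term $e^{(\mu)}_t\,dt$ by its $\mathcal Z^{(\mu)}$-conditional expectation and show that the result is still a $\mathcal Z^{(\mu)}$-martingale problem: using part \eqref{iii-MY}, $\E[\,e^{(\mu)}_t \mid \mathcal Z^{(\mu)}_t\,]=\E[1/x \text{ under GIG}]\cdot(\ldots)$, and the identity $\int x^{\mu}\exp(-\tfrac1{2z}(x+1/x))\,dx / \int x^{\mu-1}\exp(\ldots)\,dx = K_{1+\mu}(1/z)/K_\mu(1/z)$ produces exactly the drift $\frac{K_{1+\mu}}{K_\mu}(1/Z^{(\mu)}_t)$; then an innovation-type argument (Lévy's characterization applied to the $\mathcal Z^{(\mu)}$-compensated martingale part, which is a continuous martingale of the right bracket $\int (Z^{(\mu)})^2$) yields the Brownian motion $\tilde B$. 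For part (i), the strict inclusion $\mathcal Z^{(\mu)}_t \subsetneq \sigma(B_s,s\le t)$: the inclusion is clear since $Z$ is a functional of $B$; strictness follows because, by part \eqref{iii-MY}, the conditional law of $e^{(\mu)}_t$ given $\mathcal Z^{(\mu)}_t$ is non-degenerate (a genuine GIG), so $e^{(\mu)}_t$, hence $B_t$, is not $\mathcal Z^{(\mu)}_t$-measurable. The main obstacle I anticipate is the rigorous passage from the conditional-on-$T^\infty$ description in Theorem \ref{thm:opp.dr}(ii) to the joint density used in step (a) of part \eqref{iii-MY}, i.e.\ justifying the change of variables and the Fubini/disintegration needed to extract the marginal of $z$ and identify the $K_\mu$ normalization — once that joint density is pinned down, parts (i), (ii), (iii) all follow by the computations sketched above.
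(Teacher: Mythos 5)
Note first that the paper does not actually prove Theorem~\ref{thm:matyorcond}: it is recalled from \cite{matyor1,matyor3,matyor2} and used as an input to the multidimensional Theorem~\ref{thm:md.matyorcond}, whose proof explicitly invokes parts (ii) and (iii) of Theorem~\ref{thm:matyorcond}. So there is no in-paper argument to compare against; I assess your sketch on its own.

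Your Itô computation $dZ^{(\mu)}_t=-Z^{(\mu)}_t\,dB_t+\bigl(\tfrac12-\mu\bigr)Z^{(\mu)}_t\,dt+e^{(\mu)}_t\,dt$ and the identification of the drift via $\int_0^\infty x^{\mu}e^{-\frac1{2z}(x+1/x)}dx\big/\int_0^\infty x^{\mu-1}e^{-\frac1{2z}(x+1/x)}dx=K_{1+\mu}(1/z)/K_\mu(1/z)$ are correct (you wrote $\E[1/x]$ where you mean $\E[x]$), and deducing (i) from (iii) is fine. The genuine gap is in (iii). The statement is a conditional law given the \emph{whole} $\sigma$-algebra $\mathcal Z^{(\mu)}_t$, i.e.\ the entire past of $Z$; what a joint-density/change-of-variables argument delivers is the conditional law given only the single value $Z^{(\mu)}_t=z$. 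The fact that these coincide — that $e^{(\mu)}_t$ is conditionally independent of $\mathcal Z^{(\mu)}_t$ given $Z^{(\mu)}_t$ — is precisely the nontrivial ``$2M-X$ analogue'' content of the theorem, and your plan does not address it. Your route through Theorem~\ref{thm:opp.dr} is in fact circular at exactly this point: conditionally on $T^\infty$ one has $\rho(u)=\log\theta+B^*(u)+\tfrac12 u+\log\frac{T^\infty-T(u)}{T^\infty}$, hence $Z(u)=T(u)e^{-\rho(u)}=\theta Z^*(u)$ with $Z^*=T^*/e^*$ built from $B^*$ alone, while $e^{\rho(t)}$ is a function of $e^*(t)$, $Z(t)$ and $T^\infty$; disintegrating against $\mathcal Z_t=\mathcal Z^*_t$ therefore requires the conditional law of $e^*(t)$ given $\mathcal Z^*_t$, which is Theorem~\ref{thm:matyorcond}(iii) for the opposite-drift process — the very statement under proof. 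Matsumoto and Yor close this loop with a time-inversion argument (this is the content of \cite{matyor3}) establishing the Markov/diffusion property of $Z^{(\mu)}$ before any density computation; that ingredient is missing from your sketch, and without it neither (iii) in the stated form nor the SDE in (ii) (whose drift uses $\E[e^{(\mu)}_t\mid\mathcal Z^{(\mu)}_t]$) is obtained. Finally, your treatment of general $\mu$ by appeal to ``the remark after Theorem~\ref{thm:opp.dr}'' is a pointer to \cite{L18,PY81} rather than a proof.
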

Note that the conditional law in \ref{iii-MY} is a generalized Inverse Gaussian distribution. Moreover, in \cite{matyor2}, Matsumoto and Yor showed that Theorem \ref{thm:matyorcond} implies the following property of intertwinnings.
\begin{theoA}\label{thm:intertwinning1d}
Let \(\mu\in\R\). Let \((\tilde{P}^{(\mu)}_t)_{t\geq 0}\) be the semigroup of \(e^{(\mu)}\) and let \((\tilde{Q}^{(\mu)}_t)_{t\geq 0}\) be the semigroup of the diffusion \(Z^{(\mu)}\).
Then, for every \(t\geq 0\),
\[\tilde{Q}_t\tilde{K}^{(\mu)}=\tilde{K}^{(\mu)}\tilde{P}_t\]
where for every measurable function \(g\) from \(\R\) into \(\R_+\),
\[\tilde{K}^{(\mu)}(g)(z)=\int_{0}^{+\infty}g(x)\frac{x^{\mu-1}}{2K_{\mu}(1/z)}\exp\left(-\frac{1}{2z}\left( \frac{1}{x}+x\right) \right)dx.\]
\end{theoA}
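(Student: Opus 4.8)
The plan is to deduce the intertwining from the two structural facts already contained in Theorem~\ref{thm:matyorcond}: that $Z^{(\mu)}$ is a time-homogeneous Markov process with respect to its own filtration $(\mathcal Z_t^{(\mu)})_{t\ge 0}$, with semigroup $\tilde Q^{(\mu)}$ (part~(ii)), and that
\[
\E\big[g(e_t^{(\mu)})\,\big|\,\mathcal Z_t^{(\mu)}\big]=\tilde K^{(\mu)}(g)\big(Z_t^{(\mu)}\big)\qquad\text{for every bounded measurable }g\ge 0,
\]
which is exactly part~(iii), since the displayed conditional density there is the kernel defining $\tilde K^{(\mu)}$. I will also use that $e^{(\mu)}$ is itself a Markov process — being the image of the drifted Brownian motion $t\mapsto B_t+\mu t$ under the homeomorphism $x\mapsto e^{x}$ — with semigroup $\tilde P^{(\mu)}$, and that $\tilde K^{(\mu)}$ is a genuine Markov kernel (the generalized Inverse Gaussian density integrates to one), so that both $\tilde K^{(\mu)}$ and $\tilde P_t^{(\mu)}$ map bounded measurable functions to bounded measurable functions.

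Fix $s>0$, $t\ge 0$ and a bounded measurable $g\ge 0$, and evaluate $\E\big[g(e_{s+t}^{(\mu)})\mid\mathcal Z_s^{(\mu)}\big]$ in two different ways. On the one hand, conditioning first on the larger $\sigma$-field $\mathcal Z_{s+t}^{(\mu)}\supseteq\mathcal Z_s^{(\mu)}$, part~(iii) of Theorem~\ref{thm:matyorcond} gives $\E[g(e_{s+t}^{(\mu)})\mid\mathcal Z_{s+t}^{(\mu)}]=\tilde K^{(\mu)}(g)(Z_{s+t}^{(\mu)})$, and then the Markov property of $Z^{(\mu)}$ yields
\[
\E\big[g(e_{s+t}^{(\mu)})\,\big|\,\mathcal Z_s^{(\mu)}\big]=\tilde Q_t^{(\mu)}\!\big(\tilde K^{(\mu)}g\big)\big(Z_s^{(\mu)}\big).
\]
On the other hand, since $Z_u^{(\mu)}=A_u^{(\mu)}/e_u^{(\mu)}$ is a functional of $(e_v^{(\mu)})_{v\le u}$, we have $\mathcal Z_s^{(\mu)}\subseteq\sigma(e_u^{(\mu)},\,u\le s)$; conditioning first on this last $\sigma$-field, the Markov property of $e^{(\mu)}$ gives $\E[g(e_{s+t}^{(\mu)})\mid\sigma(e_u^{(\mu)},u\le s)]=(\tilde P_t^{(\mu)}g)(e_s^{(\mu)})$, and applying part~(iii) of Theorem~\ref{thm:matyorcond} to the bounded measurable function $\tilde P_t^{(\mu)}g$ yields
\[
\E\big[g(e_{s+t}^{(\mu)})\,\big|\,\mathcal Z_s^{(\mu)}\big]=\tilde K^{(\mu)}\!\big(\tilde P_t^{(\mu)}g\big)\big(Z_s^{(\mu)}\big).
\]
Comparing the two right-hand sides, the functions $\tilde Q_t^{(\mu)}\tilde K^{(\mu)}g$ and $\tilde K^{(\mu)}\tilde P_t^{(\mu)}g$ coincide at $Z_s^{(\mu)}$ almost surely.

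To turn this into an identity of functions on all of $(0,\infty)$ — hence of the kernels $\tilde Q_t^{(\mu)}\tilde K^{(\mu)}$ and $\tilde K^{(\mu)}\tilde P_t^{(\mu)}$, since $g$ is arbitrary — I would invoke that for every $s>0$ the law of $Z_s^{(\mu)}$ has a density that is strictly positive on $(0,\infty)$ (the diffusion of part~(ii) is non-degenerate in the interior). This gives equality Lebesgue-almost everywhere, and a routine continuity argument (Feller regularity of $\tilde P^{(\mu)}$ and $\tilde Q^{(\mu)}$ together with the smooth dependence of $\tilde K^{(\mu)}$ on $z$, taking $g$ continuous with compact support, which suffices) upgrades it to every $z\in(0,\infty)$.

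I expect the only genuinely delicate point to be this last passage from ``almost sure equality at $Z_s^{(\mu)}$'' to ``equality of the two kernels everywhere'', i.e.\ the support and regularity statements need to be justified with some care; the two-way conditioning itself is essentially formal once Theorem~\ref{thm:matyorcond} is available. A more computational alternative would be to verify the infinitesimal intertwining $\mathcal L_{Z}\tilde K^{(\mu)}=\tilde K^{(\mu)}\mathcal L_{e}$ at the level of generators, with $\mathcal L_{e}=\tfrac12 x^{2}\partial_x^{2}+(\mu+\tfrac12)x\,\partial_x$ the generator of $e^{(\mu)}$ and $\mathcal L_{Z}=\tfrac12 z^{2}\partial_z^{2}+\big((\tfrac12-\mu)z+\tfrac{K_{1+\mu}}{K_{\mu}}(1/z)\big)\partial_z$ read off from part~(ii); this reduces to the standard contiguity relations for the modified Bessel functions $K_{\alpha}$, but it trades the support discussion for differentiation under the integral sign, so I would keep the probabilistic argument as the main line.
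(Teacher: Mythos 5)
Your proof is correct and follows the same two-way conditioning argument the paper uses to prove the multi-dimensional analogue, Theorem \ref{thm:intertwinnings}; the paper states Theorem \ref{thm:intertwinning1d} as a cited result from Matsumoto and Yor and does not give a separate one-dimensional proof. Your extra discussion of upgrading the almost-sure equality at $Z_s^{(\mu)}$ to a pointwise identity of kernels addresses a step the paper's proof of Theorem \ref{thm:intertwinnings} leaves implicit, and the alternative generator-level verification you sketch is a reasonable fallback.
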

The aim of this article is now to obtain a multi-dimensional version of Theorem \ref{thm:opp.dr} , \ref{thm:matyorcond} and \ref{thm:intertwinning1d}. In particular, we are able to apply the time change from Lamperti's relation to the forthcoming Theorem \ref{thm:SDE.X}.
\subsection{Brownian motions with interacting drifts and the random \(\beta\) potential}
\label{subsec:recalls}
Let \(\G=(V,E)\) be a finite, connected, and non-oriented graph, endowed with conductances \((W_e)_{e\in E}\in(\R_+^*)^E\). For \(i,j\in V\), we denote by \(W_{i,j}=W_{\{i,j\}}\) if \(\{i,j\}\in E\), otherwise \(W_{i,j}=0\). Note that it is possible to have \(W_{i,i}>0\). For \(\beta\in\R^V\), we define \(H_\beta=2\beta-W\), where \(W\) is the graph adjacency matrix \(W=(W_{i,j})_{i,j\in V}\), and \(\beta\) denotes here abusively the diagonal matrix with diagonal coefficients \((\beta_i)_{i\in V}\); in particular, \(H_\beta\) is a \(V\times V\) matrix.

{\bf Notation:} For a vector \(x=(x_i)_{i\in V}\in \mathbb{R}^V\), we sometimes simply write \(x\) for the diagonal matrix with diagonal coefficients \(x_i\), there will be no ambiguity thanks to the context.
\begin{propA}[Theorem 4 in \cite{SabTarZen}, Theorem 2.2 in \cite{Let19}]
\label{prop:nuwtheta}
For all \(\theta\in(\R_+^*)^V\) and \(\eta\in(\R_+)^V\), the measure \(\nu_V^{W,\theta,\eta}\) defined by
\[\nu_V^{W,\theta,\eta}(d\beta)=\ind_{H_\beta>0}\l(\frac{2}{\pi}\r)^{|V|/2}\exp\l(-\frac{1}{2}\langle\theta,H_\beta\theta\rangle-\frac{1}{2}\langle\eta,(H_\beta)^{-1}\eta\rangle+\langle\eta,\theta\rangle\r)\frac{\prod_{i\in V}\theta_i}{\sqrt{|H_\beta|}}d\beta\]
is a probability distribution. Moreover, for all \(i\in V\), the random variable \(\frac{1}{2\beta_i-W_{i,i}}\) has Inverse Gaussian distribution with parameter \((\frac{\theta_i}{\eta_i+\sum_{j\neq i}W_{i,j}\theta_j},\theta_i^2)\). Furthermore, for every \((\lambda_i)\in\R_+^V\), 
\[\int\exp\left(-\sum\limits_{i\in V}\lambda_i\beta_i\right)d\nu^{W,\theta,\eta}(\beta)=e^{-\frac{1}{2}\langle\sqrt{\theta^2+\lambda},W\sqrt{\theta^2+\lambda}\rangle+\frac{1}{2}\langle\theta,W\theta\rangle+\langle\eta,\theta-\sqrt{\theta^2+\lambda}\rangle}\times \prod\limits_{i\in V}\frac{\theta_i}{\sqrt{\theta_i^2+\lambda_i}} .\]
\end{propA}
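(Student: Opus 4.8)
The three assertions share a common core---that $\nu_V^{W,\theta,\eta}$ is a probability measure---and the plan is to establish that and then deduce the other two quickly. I would begin by reducing the Laplace transform to the probability-measure statement. Since $H_\beta=2\beta-W$ we have $\langle\theta,H_\beta\theta\rangle=2\sum_i\beta_i\theta_i^2-\langle\theta,W\theta\rangle$, so for $\lambda\in\R_+^V$ and $\tilde\theta_i:=\sqrt{\theta_i^2+\lambda_i}$,
\[
e^{-\sum_i\lambda_i\beta_i}\,e^{-\frac12\langle\theta,H_\beta\theta\rangle}=e^{-\frac12\langle\tilde\theta,H_\beta\tilde\theta\rangle}\,e^{-\frac12\langle\tilde\theta,W\tilde\theta\rangle+\frac12\langle\theta,W\theta\rangle}.
\]
Substituting this into the definition of $\nu_V^{W,\theta,\eta}$ gives $e^{-\sum_i\lambda_i\beta_i}\,\nu_V^{W,\theta,\eta}(d\beta)=C\,\nu_V^{W,\tilde\theta,\eta}(d\beta)$ with $C=\frac{\prod_i\theta_i}{\prod_i\tilde\theta_i}\exp\!\big(-\frac12\langle\tilde\theta,W\tilde\theta\rangle+\frac12\langle\theta,W\theta\rangle+\langle\eta,\theta-\tilde\theta\rangle\big)$; as $\tilde\theta\in(\R_+^*)^V$ this is again a member of the family, so once total mass $1$ is known for every member, integrating yields exactly the stated Laplace transform (and $\lambda=0$ recovers total mass $1$).

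Next I would prove the probability-measure statement by induction on $|V|$. The base case $|V|=1$ is classical: the substitution $s=2\beta_i-W_{i,i}$ turns $\nu_{\{i\}}^{W,\theta,\eta}$ into the density \eqref{T.dist}, which integrates to $1$ by Proposition \ref{prop-1d-bmhit}, equivalently $\int_0^\infty s^{-1/2}e^{-as-b/s}ds=\sqrt{\pi/a}\,e^{-2\sqrt{ab}}$ for $a,b\ge0$. For the inductive step, fix $i_0\in V$, set $V'=V\setminus\{i_0\}$, and write $H_\beta$ in block form with respect to $\{i_0\}$ and $V'$. On $\{H_\beta>0\}$ the block $H'_\beta:=(H_\beta)_{V',V'}$---which is exactly $H_\beta$ for $\G$ restricted to $V'$---is automatically positive definite, and being symmetric positive definite with non-positive off-diagonal entries it is an $M$-matrix, so $(H'_\beta)^{-1}\ge0$ entrywise. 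Using the Schur-complement formulas for $|H_\beta|$, for $\langle\theta,H_\beta\theta\rangle$ and for $\langle\eta,H_\beta^{-1}\eta\rangle$, and integrating $\beta_{i_0}$ out via the substitution $s=2\beta_{i_0}-W_{i_0,i_0}-W_{i_0,V'}(H'_\beta)^{-1}W_{V',i_0}$ together with the same one-dimensional integral, the marginal of $\nu_V^{W,\theta,\eta}$ on the coordinates in $V'$ should come out to be $\nu_{V'}^{W',\theta',\eta'}$, where $W',\theta'$ are the restrictions of $W,\theta$ to $V'$ and $\eta'_j=\eta_j+\theta_{i_0}W_{i_0,j}$ for $j\in V'$ (note $\eta'\ge0$). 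Fubini plus the inductive hypothesis then complete the induction.

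The Inverse Gaussian marginal follows by iterating this marginalization down to the single vertex $\{i\}$: restricting $W$ never alters the conductances among surviving vertices, so $\theta_i$ and $W_{i,i}$ stay fixed while $\eta_i$ accumulates $\sum_{j\ne i}W_{i,j}\theta_j$ regardless of the order of removal, whence the law of $\beta_i$ is $\nu_{\{i\}}^{W_{i,i},\theta_i,\eta_i+\sum_{j\ne i}W_{i,j}\theta_j}$, i.e. $1/(2\beta_i-W_{i,i})\sim IG\big(\theta_i/(\eta_i+\sum_{j\ne i}W_{i,j}\theta_j),\theta_i^2\big)$ by the base case. The main obstacle is the linear algebra in the inductive step: one must check that the three quadratic terms involving $(H'_\beta)^{-1}$ (one arising from the Schur complement inside $\langle\theta,H_\beta\theta\rangle$, two from $\langle\eta,H_\beta^{-1}\eta\rangle$) complete the square to $-\frac12\langle\eta',(H'_\beta)^{-1}\eta'\rangle$, that the residual linear terms reassemble as $\langle\eta',\theta'\rangle$, and crucially that the sign in the factor $e^{-2\sqrt{ab}}=e^{-\theta_{i_0}\tilde\eta}$ produced by the $\beta_{i_0}$-integral (with $\tilde\eta=\eta_{i_0}+W_{i_0,V'}(H'_\beta)^{-1}\eta_{V'}$) is the correct one---which holds precisely because $\tilde\eta\ge0$, the single point where the $M$-matrix positivity, hence the hypotheses $W\ge0$ and $\eta\ge0$, is genuinely used.
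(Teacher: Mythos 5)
This Proposition is not proved in the paper at all---it is stated as a recalled background result, cited to Theorem~4 in Sabot--Tarr\`es--Zeng \cite{SabTarZen} and Theorem~2.2 in Letac--Wesolowski \cite{Let19}. So there is no ``paper proof'' to compare against; your attempt supplies a proof from scratch.

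Your argument is correct and self-contained. The three pillars all check out: the reduction of the Laplace transform to the total-mass statement via the tilting $\theta\mapsto\tilde\theta=\sqrt{\theta^2+\lambda}$ follows immediately from $\langle\theta,H_\beta\theta\rangle=2\sum_i\beta_i\theta_i^2-\langle\theta,W\theta\rangle$; the $|V|=1$ base case reduces, after the substitution $s=2\beta_i-W_{i,i}$, to the density of the reciprocal of the Inverse Gaussian, i.e.\ the normalization $\int_0^\infty s^{-1/2}e^{-as-b/s}\,ds=\sqrt{\pi/a}\,e^{-2\sqrt{ab}}$; and I verified the inductive step explicitly. Writing $a=2\beta_{i_0}-W_{i_0,i_0}$, $b=W_{V',i_0}$, $H'=H'_\beta$, $s=a-b^T(H')^{-1}b$, $\tilde\eta=\eta_{i_0}+b^T(H')^{-1}\eta_{V'}$, the block inverse gives $\langle\eta,H_\beta^{-1}\eta\rangle=\tilde\eta^2/s+\eta_{V'}^T(H')^{-1}\eta_{V'}$; the $\beta_{i_0}$-integral produces $e^{-\theta_{i_0}\tilde\eta}$; and a direct expansion shows that
\[
-\theta_{i_0}\tilde\eta-\tfrac12\theta_{i_0}^2 b^T(H')^{-1}b+\theta_{i_0}b^T\theta_{V'}-\tfrac12\theta_{V'}^TH'\theta_{V'}-\tfrac12\eta_{V'}^T(H')^{-1}\eta_{V'}+\eta_{i_0}\theta_{i_0}+\eta_{V'}^T\theta_{V'}
\]
equals $-\tfrac12\theta_{V'}^TH'\theta_{V'}-\tfrac12\eta'^T(H')^{-1}\eta'+\eta'^T\theta_{V'}$ with $\eta'=\eta_{V'}+\theta_{i_0}b\ge0$, so the marginal is indeed $\nu_{V'}^{W',\theta',\eta'}$. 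You also correctly isolate the only place where positivity of $W$ and $\eta$ enters: the $M$-matrix property of $H'$ gives $(H')^{-1}\ge0$ entrywise, hence $\tilde\eta\ge0$, which is exactly what makes $e^{-2\sqrt{ab}}=e^{-\theta_{i_0}\tilde\eta}$ the decaying branch. The accumulation $\eta_i\mapsto\eta_i+\sum_{j\ne i}W_{i,j}\theta_j$ under iterated marginalization (using the symmetry of $W$ and the fact that $\theta,W$ are never modified on surviving vertices) correctly yields the stated Inverse Gaussian marginal.

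One small stylistic caveat: the sentence ``the substitution $s=2\beta_i-W_{i,i}$ turns $\nu_{\{i\}}$ into the density \eqref{T.dist}'' is off by the further change of variables $t=1/s$; you clearly know this, since you immediately restate it as the Bessel-type integral, but it would be cleaner to say ``into the density of the reciprocal of \eqref{T.dist}.''
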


For \(t\in(\R_+)^V\), we also denote by \(K_t\) the matrix \(K_t=\Id-tW\), where \(t\) still denotes the diagonal matrix with coefficients \((t_i)_{i\in V}\). Note that if \(t\in(\R_+^*)^V\), we have \(K_t=t H_{\frac{1}{2t}}\), where \(\frac{1}{2t}=\l(\frac{1}{2t_i}\r)_{i\in V}\). Finally, for \(t\in(\R_+)^V\) and \(T\in(\o{\R_+})^V\), we define the vector \(t\wedge T=(t_i\wedge T_i)_{i\in V}\). 

\begin{theoA}\label{thm:SDE.X}[Lemma 1 and Theorem 1 in \cite{SabZenEDS}]
Let \(\theta\in(\R_+^*)^V\) and \(\eta\in(\R_+)^V\) be fixed, and let \((B_i(t))_{i\in V,t\geq 0}\) be a standard \(|V|\)-dimensional Brownian motion.
\begin{itemize}
\item[(i)] The following stochastic differential equation (SDE) has a unique pathwise solution :
\begin{equation}\label{SDE:X}
X_i(t)=\theta_i+\int_0^t\ind_{s<\tau_i}dB_i(s) -\int_0^t\ind_{s<\tau_i}((W\psi)(s)+\eta)_i ds \tag{\(E_V^{W,\theta,\eta}(X)\)}
\end{equation}
for \(i\in V\) and \(t\geq 0\), where for \(i\in V\), \(\tau_i\) is the first hitting time of \(0\) by \(X_i\), and for \(t\geq 0\),
\[
\psi(t)=K_{t\wedge \tau}^{-1}(X(t)+(t\wedge \tau)\eta).
\]

\item[(ii)] If \((X_i)_{i\in V}\) is solution of \eqref{SDE:X}, the vector \(\l(\frac{1}{2 \tau_i}\r)_{i\in V}\) has distribution \(\nu_V^{W,\theta,\eta}\), and conditionally on \((\tau_i)_{i\in V}\), the paths \((X_i(t))_{0\leq t\leq \tau_i}\) are independent \(3\)-dimensional Bessel bridges.
\end{itemize}
\end{theoA}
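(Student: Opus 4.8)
The plan is to prove the two parts together: strong well-posedness of \eqref{SDE:X} will follow by combining local strong existence/uniqueness with an explicit global solution, and that global solution will \emph{be} the mixture described in (ii), so the distributional statements come essentially for free, the classical one-dimensional picture (Proposition \ref{prop-1d-bmhit} and the Bessel-bridge SDE) supplying the single-coordinate input. For the local theory, observe that on the open set of configurations where \(K_{t\wedge\tau}\) is invertible the drift \(-((W\psi)(s)+\eta)_i\) is a smooth, hence locally Lipschitz, functional of the stopped path \(s\mapsto X(s\wedge\tau)\) — the quantities \(s\wedge\tau_i\) and the events \(\{s<\tau_i\}\) being adapted — so a standard fixed-point argument gives a unique strong solution up to the first time \(\zeta\) at which \(K_{t\wedge\tau}\) degenerates or a coordinate explodes, the coordinates freezing successively at the \(\tau_i\). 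By pathwise uniqueness it then suffices to produce one solution defined for all time.

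That solution is the mixture: sample \(\beta\sim\nu_V^{W,\theta,\eta}\) (a probability measure by Proposition \ref{prop:nuwtheta}), put \(\tau_i=1/(2\beta_i)\in(0,\infty)\), and, conditionally on \(\tau\), let \(\widehat X_i\) on \([0,\tau_i]\) be independent \(3\)-dimensional Bessel bridges from \(\theta_i\) to \(0\), extended by \(0\) afterwards. On \(\{H_\beta>0\}\) one has \(K_\tau=\tau H_\beta\), and more generally every \(K_{t\wedge\tau}\) is invertible (a short linear-algebra check: for \(0<s\le\tau\), \(s^{-1}-W\) differs from \(H_\beta>0\) by a nonnegative diagonal matrix), so \(\widehat X\) stays on the good set for all time. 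The substance is to show that \(\widehat X\) solves \eqref{SDE:X} in its own filtration \(\mathcal F_t=\sigma(\widehat X_s,\,s\le t)\). In the larger filtration that also sees \(\beta\), coordinate \(i\) carries the Bessel-bridge drift \(\frac{1}{\widehat X_i(t)}-\frac{\widehat X_i(t)}{\tau_i-t}\) on \([0,\tau_i)\); projecting onto \(\mathcal F_t\) replaces this by its \(\mathcal F_t\)-conditional expectation \(\frac{1}{\widehat X_i(t)}-\widehat X_i(t)\,\E[(\tau_i-t)^{-1}\mid\mathcal F_t]\), so everything reduces to computing the conditional law of \(\beta\) (equivalently \(\tau\)) given the paths up to time \(t\). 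By Bayes this posterior has density proportional to \(\nu_V^{W,\theta,\eta}\) times the likelihoods of the observed partial Bessel bridges; using the explicit form of \(\nu_V^{W,\theta,\eta}\) (e.g.\ via the Laplace-transform identity in Proposition \ref{prop:nuwtheta}) the posterior should stay in the same family with updated parameters, the resolvent \(K_{t\wedge\tau}^{-1}\) appearing as the relevant precision matrix. Pushing this through ought to collapse the projected drift to exactly \(-((W\psi)(t)+\eta)_i\) with \(\psi(t)=K_{t\wedge\tau}^{-1}(\widehat X(t)+(t\wedge\tau)\eta)\), which is the claim. Carrying out this conditional-law computation — watching the \(1/\widehat X_i\) terms cancel and \(K^{-1}\) emerge as the posterior resolvent — is the step I expect to be the main obstacle.

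Granting it, part (i) is complete (local uniqueness plus the global solution \(\widehat X\)) and part (ii) is immediate, since by construction \(\widehat X\) has the stated conditional-Bessel-bridge law and \((1/(2\tau_i))_i=\beta\sim\nu_V^{W,\theta,\eta}\), and \(\widehat X\) is the solution of \eqref{SDE:X}. As an alternative route to the distributional part once a global strong solution is available, one can use Girsanov: under the measure \(\widetilde\P\) killing the drift, the \(X_i\) are independent Brownian motions from \(\theta_i\) stopped on first hitting \(0\), hence, by Proposition \ref{prop-1d-bmhit} with \(\eta=0\), conditionally on \(\tau\) they are independent \(3\)-dimensional Bessel bridges, with the \(\tau_i\) independent and each having the \(\eta=0\) density \(\frac{\theta_i}{\sqrt{2\pi t^3}}\exp(-\theta_i^2/(2t))\); one then checks, by an Itô and matrix computation handling \(d\log\det K_{t\wedge\tau}\) and the successive freezings, that \(d\P/d\widetilde\P\) simplifies to a function of \(\tau\) alone whose product with these densities is the density of \(\nu_V^{W,\theta,\eta}\), and the \(\sigma(\tau)\)-measurability of \(d\P/d\widetilde\P\) then yields the conditional Bessel-bridge statement as well.
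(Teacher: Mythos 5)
Your architecture is right and essentially matches the cited source: Theorem~\ref{thm:SDE.X} is proved in \cite{SabZenEDS} precisely by the mixture-plus-Girsanov strategy you sketch as an ``alternative,'' and the present paper's second proof of Theorem~\ref{thm:SDE.rho}~(i) reproduces that structure (mixture of quenched Bessel bridges, pass to a reference measure under which the coordinates are Brownian, compute the Radon--Nikodym derivative \(D\) by integrating out \(\beta\), then Girsanov again). Your primary route, filtering/innovation onto the observation filtration \(\mathcal F_t\), is a morally equivalent repackaging: computing \(\E[(\tau_i-t)^{-1}\mid\mathcal F_t]\) and computing \(D(t)=\int\prod_i E_i(t)^{-1}\,\nu_V^{W,\theta,\eta}(d\beta)\) are two faces of the same Bayes computation. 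Your local-Lipschitz argument for existence/uniqueness on the good set, and the check that \(K_{t\wedge\tau}\) stays invertible along the mixture because \((t\wedge\tau)^{-1}-W\succeq H_\beta\succ 0\), are both sound.

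The genuine gap is that the step you yourself flag as ``the main obstacle'' is the whole theorem, and the way you propose to close it risks circularity. You write that ``the posterior should stay in the same family with updated parameters'' and that the \(1/\widehat X_i\) terms ``ought to'' cancel with \(K^{-1}\) emerging as the posterior resolvent --- but that conjugacy of the \(\nu_V\) family under conditioning on \(\mathcal F_t\) \emph{is} the content of the strong Markov property (Theorem~\ref{thm:Markov.X}), which in \cite{SabZenEDS} is \emph{deduced from} Theorem~\ref{thm:SDE.X}~(ii), not assumed. To prove Theorem~\ref{thm:SDE.X} you must establish the conjugacy directly by a change of variables in the \(\beta\)-integral: with \(T(t)=t\wedge\tau\) replaced by its current value, one substitutes \(\widetilde\beta_i=\beta_i/\phi_i(t)\) with \(\phi_i=1-\beta_i/\beta_i^{(t)}\), uses the factorization \(K_{1/2\beta}=\widetilde K^{(t)}K^{(t)}\) and the identity \(\langle\widetilde\eta,(\widetilde H)^{-1}\widetilde\eta\rangle=\langle\eta,H_\beta^{-1}\eta\rangle-\langle\eta,(H^{(t)})^{-1}\eta\rangle\) (Lemma~\ref{lem:tech}), tracks the Jacobian \(\prod\phi_i^{-2}\) against \(\sqrt{|H_\beta|}\), and recognizes the remaining integral as \(\int\nu_V^{\widetilde W,\widetilde\theta,\widetilde\eta}=1\). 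Without carrying this out, neither the cancellation of the \(1/\widehat X_i\) terms nor the appearance of \(K_{t\wedge\tau}^{-1}\) is established, so the proposal stops short of proving that the constructed \(\widehat X\) actually satisfies \eqref{SDE:X}. Likewise in the Girsanov variant, you assert that \(d\P/d\widetilde\P\) ``simplifies to a function of \(\tau\) alone''; that is true, but it is exactly the same integral identity, and it is the load-bearing claim rather than a verification. In short: right plan, right alternative, but the one computation that distinguishes \(\nu_V^{W,\theta,\eta}\) from an arbitrary mixing law is left entirely open.
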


To obtain an analogue of the opposite drift theorem as in Section \ref{sec:opp.dr.lamp}, we want to apply the time change from Lamperti's relation to solutions \((X_i)_{i\in V}\) of \eqref{SDE:X}. A problem will arise in the interaction term, since the time change will be different on every coordinate of \(X\). To solve this, we will use a form of strong Markov property verified by solutions of \eqref{SDE:X}, which is a consequence of Theorem \ref{thm:SDE.X}(ii). This Markov property will be true with respect to multi-stopping times, defined as follows. 

\begin{defi}\label{defi:multistop}
Let \(X\) be a multi-dimensional càdlag process indexed by \(V\). A random vector \(T=(T_i)_{i\in V} \in \overline{\R_+}^V\) is called a multi-stopping time with respect to \(X\) if for all \(t\in\R_+^V\), the event \(\cap_{i\in V}\{T_i\leq t_i\}\) is \(\mathcal{F}^X_t\)-measurable, where 
\[
\mathcal{F}^X_t=\sigma \big( (X_i(s))_{0\leq s\leq t_i}, i\in V \big).
\]
In this case, we denote by \(\mathcal{F}^X_T\) the \(\sigma\)-algebra of events anterior to \(T\), \textit{i.e.}
\[
\mathcal{F}^X_T=\l\{ A\in\mathcal{F}^X_\infty, \forall t\in\R_+^V, A\cap\{T_i\leq t_i\}\in\mathcal{F}^X_{t} \r\}
\]
\end{defi}

Let us now formulate the strong Markov property for solutions of \eqref{SDE:X}.

\begin{theoA}\label{thm:Markov.X}[Theorem 2 (iv) in \cite{SabZenEDS}]
Let \(X\) be a solution of \eqref{SDE:X}, and \(T=(T_i)_{i\in V}\) be a multi-stopping time with respect to \(X\). Define the shifted process \(Y\) by
\[
Y_i(t)=X_i(T_i+t)
\]
for \(i\in V\) and \(t\geq 0\). Moreover, we denote
\[
\widetilde{W}^{(T)}=W\l(K_{T\wedge \tau}\r)^{-1},\widetilde{\eta}^{(T)}=\eta+\widetilde{W}^{(T)}\big((T\wedge \tau)\eta\big), \text{ and } X(T)=(X_i(T_i))_{i\in V}.
\]
On the event \(\cap_{i\in V}\{T_i<\infty\}\), conditionally on \(T\) and \(\mathcal{F}^X_T\), the process \(Y\) has the same distribution as the solution of \(\big(E_V^{\widetilde{W}^{(T)},X(T),\widetilde{\eta}^{(T)}}(X)\big)\).
\end{theoA}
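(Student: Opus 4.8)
The plan is to prove the statement first for a \emph{deterministic} multi-time $T=t\in\R_+^V$, and then to bootstrap to an arbitrary multi-stopping time by a dyadic discretisation. For the deterministic case I would work not with the defining SDE of Theorem~\ref{thm:SDE.X} directly, but with its mixture representation, Theorem~\ref{thm:SDE.X}(ii): the law of a solution $X$ is obtained by first sampling $\tau$ so that $\bigl(\tfrac{1}{2\tau_i}\bigr)_{i\in V}\sim\nu_V^{W,\theta,\eta}$, and then, conditionally on $\tau$, running independent $3$-dimensional Bessel bridges from $\theta_i$ to $0$ on $[0,\tau_i]$. Since the target SDE $\bigl(E_V^{\widetilde{W}^{(t)},X(t),\widetilde{\eta}^{(t)}}(X)\bigr)$ is characterised by the analogous mixture, it suffices to establish, conditionally on $\mathcal{F}^X_t$ and on the ($\mathcal{F}^X_t$-measurable) set $J=\{i:t_i\ge\tau_i\}$ of already-absorbed coordinates: (a) that the vector $\bigl(\tfrac{1}{2(\tau_i-t_i)}\bigr)_{i\notin J}$ has law $\nu^{\widetilde{W}^{(t)},X(t),\widetilde{\eta}^{(t)}}$ for the graph $V\setminus J$ with the restricted parameters (the $J$-coordinates of the shifted process being frozen at $0$); and (b) that, conditionally on $\tau$ as well, the shifted paths $\bigl(X_i(t_i+u)\bigr)_{0\le u\le \tau_i-t_i}$, $i\notin J$, are independent $3$-dimensional Bessel bridges from $X_i(t_i)$ to $0$. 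Statement (b) is exactly the Markov property of the Bessel bridge at a fixed time, which is classical.

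\textbf{The deterministic identity (a).} Write the joint law of $\bigl(\tau,(X_i|_{[0,t_i]})_{i\in V}\bigr)$ as the product of the $\nu_V^{W,\theta,\eta}$-density, in the variable $\beta=\bigl(\tfrac{1}{2\tau_i}\bigr)_{i\in V}$, with the Bessel-bridge path densities. By the Markov property of the $3$-dimensional Bessel process, each bridge factor (the density of the Bessel bridge $\theta_i\to 0$ on $[0,\tau_i]$, evaluated on the path restricted to $[0,t_i]$) splits, on $\{t_i<\tau_i\}$, as a factor depending only on $X_i|_{[0,t_i]}$ times the ratio $h\bigl(X_i(t_i),\tau_i-t_i\bigr)/h(\theta_i,\tau_i)$, where $h(x,v)=\tfrac{x}{\sqrt{2\pi v^3}}e^{-x^2/2v}$ is the $3$-dimensional Bessel hitting-time density; on $\{t_i\ge\tau_i\}$ it simply pins the value of $\tau_i$ and the frozen continuation. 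Hence the conditional law of $\tau$ given $\mathcal{F}^X_t$ depends on the observed paths only through $X(t)$, and is proportional (in $\tau$) to an explicit expression which one recognises as the $\nu^{\widetilde{W}^{(t)},X(t),\widetilde{\eta}^{(t)}}$-density in the variable $\bigl(\tfrac{1}{2(\tau_i-t_i)}\bigr)_{i\notin J}$. The algebraic core of this recognition is the matrix factorisation
\[
\widetilde{K}_{\,\tau-(t\wedge\tau)}\,K_{t\wedge\tau}=K_\tau,\qquad\text{where }\widetilde{K}_s:=\Id-s\,\widetilde{W}^{(t)},
\]
which follows in one line from $\widetilde{W}^{(t)}=W(K_{t\wedge\tau})^{-1}$; it yields $|K_\tau|=|\widetilde{K}_{\tau-(t\wedge\tau)}|\,|K_{t\wedge\tau}|$ (matching the $1/\sqrt{|H_\beta|}$ prefactors) and, together with the companion identities for $K_\tau^{-1}$, the telescoping of the quadratic forms $\langle\theta,H_\beta\theta\rangle$ and $\langle\eta,H_\beta^{-1}\eta\rangle$ in the exponent. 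Alternatively one may avoid densities altogether and simply verify, for the shifted parameters, the Laplace-transform identity of Proposition~\ref{prop:nuwtheta}.

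\textbf{Bootstrap to multi-stopping times.} Given a multi-stopping time $T$, set $T^{(n)}_i=2^{-n}\lceil 2^nT_i\rceil$ (keeping $+\infty$ where $T_i=+\infty$). Checking the defining events shows that each $T^{(n)}$ is again a multi-stopping time and that $\mathcal{F}^X_T\subseteq\mathcal{F}^X_{T^{(n)}}$. As $T^{(n)}$ takes values in the countable dyadic grid and each atom $\{T^{(n)}=t\}$ lies in $\mathcal{F}^X_t$, applying the deterministic case on each atom and summing gives the conclusion with $T$ replaced by $T^{(n)}$, using that $\widetilde{W}^{(t)}=\widetilde{W}^{(T^{(n)})}$, $X(t)=X(T^{(n)})$ and $\widetilde{\eta}^{(t)}=\widetilde{\eta}^{(T^{(n)})}$ on $\{T^{(n)}=t\}$. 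Finally I would let $n\to\infty$: since $T^{(n)}\downarrow T$ coordinatewise and $X$ is continuous, $X(T^{(n)})\to X(T)$, $\widetilde{W}^{(T^{(n)})}\to\widetilde{W}^{(T)}$, $\widetilde{\eta}^{(T^{(n)})}\to\widetilde{\eta}^{(T)}$ and $X_i(T^{(n)}_i+\cdot)\to X_i(T_i+\cdot)$ locally uniformly. Testing against a bounded continuous functional of the shifted path and a bounded $\mathcal{F}^X_T$-measurable (hence $\mathcal{F}^X_{T^{(n)}}$-measurable) random variable, and using the continuous dependence of the law of a solution of $\bigl(E_V^{W,\theta,\eta}(X)\bigr)$ on $(W,\theta,\eta)$ — immediate from the mixture, since $\nu_V^{W,\theta,\eta}$ depends weakly continuously on the parameters and Bessel bridges depend continuously on their length and endpoints — one passes to the limit and obtains the statement for $T$.

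\textbf{Where the difficulty lies.} The genuinely substantial step will be the explicit recognition in (a) of the conditional law of $\tau$ as the measure $\nu$ attached to the shifted parameters: it rests on the factorisation of $K_\tau$ displayed above and on the compatible telescoping of the exponent of $\nu_V^{W,\theta,\eta}$, and is exactly where the specific algebraic form of that measure is used. A secondary difficulty is the treatment of the already-absorbed coordinates $\{i:T_i\ge\tau_i\}$, which forces one to make sense of $\bigl(E_V^{W,\theta,\eta}(X)\bigr)$ with some starting coordinates equal to $0$ (equivalently, on the subgraph $V\setminus J$) and to run the argument conditionally on $J$. By contrast the dyadic discretisation of the last step is routine, modulo some care with the $\sigma$-algebras of events anterior to a multi-stopping time.
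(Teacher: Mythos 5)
The paper does not prove Theorem~\ref{thm:Markov.X}; it is recalled verbatim from \cite{SabZenEDS} (Theorem 2(iv) there), so there is no internal proof to compare you against. That said, your strategy matches the one used in \cite{SabZenEDS}: work first with a deterministic multi-time $t$, exploit the mixture representation of Theorem~\ref{thm:SDE.X}(ii), factor each Bessel-bridge density on $[0,t_i]$ through the ratio of hitting densities $h(X_i(t_i),\tau_i-t_i)/h(\theta_i,\tau_i)$, and recognise the resulting conditional law of $\tau$ as a $\nu$-measure in the shifted parameters; then pass to general multi-stopping times by dyadic discretisation. The algebraic pivot you single out, $\widetilde{K}_{\,\tau-(t\wedge\tau)}\,K_{t\wedge\tau}=K_\tau$ with the attendant telescoping of $|K_\tau|$ and of $\langle\theta,H_\beta\theta\rangle$, $\langle\eta,H_\beta^{-1}\eta\rangle$, is precisely Lemma~\ref{lem:tech} (Lemma~2 in \cite{SabZenEDS}), and you correctly locate it as the heart of the argument; the dyadic-approximation step and the verifications that $T^{(n)}$ is a multi-stopping time, that $\{T^{(n)}=t\}\in\mathcal{F}^X_t$ and that $\mathcal{F}^X_T\subseteq\mathcal{F}^X_{T^{(n)}}$ are routine and correctly sketched. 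The one point you should carry out rather than merely flag is the handling of the absorbed coordinates $J=\{i:T_i\ge\tau_i\}$: there $X_i(T_i)=0$, so $\nu_V^{W,\theta,\eta}$ cannot be taken at face value (it requires $\theta\in(\R_+^*)^V$), and the target SDE must be interpreted on $V\setminus J$ with parameters obtained by eliminating the $J$-block; the frozen coordinates still act on the living ones through the $J$-entries of $\widetilde{W}^{(T)}$ and $\widetilde{\eta}^{(T)}$, and checking that this elimination is compatible with the $\nu$-family (a Schur-complement/restriction property) is a genuine, if standard, piece of the proof rather than a purely notational convention.
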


\subsection{Main results : A multi-dimensional version of the opposite drift theorem, the conditional Matsumoto-Yor property and consequences}
\label{subsec:multid}
Let \((X_i)_{i\in V}\) be a solution of \eqref{SDE:X}. As in the usual case of Lamperti's relation, let us introduce the functional that will define the time change. For \(i\in V\) and \(t\geq 0\), we set 
\[
U_i(t)= \int_0^t \frac{\ind_{s<\tau_i}}{X_i(s)^2}ds.
\]
It turns out that for any \(i\in V\), \(\lim_{t\to \tau_i}U_i(t)=+\infty\) a.s. It will be proved in Lemma \ref{lem:lim.u}. Therefore, for all \(i\in V\), we can define \(T_i=(U_i|_{[0,\tau_i[})^{-1}\). In particular, for all \(u\geq 0\), \(T_i(u)<\tau_i\). Moreover \(\lim_{u\to\infty}T_i(u)=\tau_i\). Thus, in this time scale, it is natural to prefer the notation \(T_i^{\infty}:=\tau_i\) for every \(i\in V\).

We will show that the time-changed solution \(\big( X_i\circ T_i \big)_{i\in V}\) can be written as
\[
X_i(T_i(u))=e^{\rho_i(u)}
\]
for \(u\geq 0\), where \((\rho_i)_{i\in V}\) is solution of a new sytem of stochastic differential equations :

\begin{thm}\label{thm:SDE.rho}
\begin{itemize}
\item[(i)]
For \(i\in V\) and \(u\geq 0\), let us define \(\rho_i(u)=\log \big( X_i(T_i(u)) \big)\). Then \((\rho,T)\) is solution of the following system of SDEs :
\begin{equation}\label{SDE:rho}
\l\{\begin{aligned}
	\rho_i(v) &= \log(\theta_i) + \widetilde{B}_i(v) + \int_0^v \l(-\frac{1}{2}-e^{\rho_i(u)}\l(\widetilde{W}^{(u)}(e^{\rho(u)}+T(u)\eta)+\eta\r)_i\r)du,\\
	T_i(v) &= \int_0^v e^{2\rho_i(u)}du,
\end{aligned} \r. \tag{\(E_V^{W,\theta,\eta}(\rho)\)}
\end{equation}
for \(i\in V\) and \(v\geq 0\), where \((\widetilde{B}_i)_{i\in V}\) is a \(|V|\)-dimensional standard Brownian motion, \(e^{\rho(u)}\) denotes the vector \((e^{\rho_i(u)})_{i\in V}\), and 
\[
\widetilde{W}^{(u)}=W K_{T(u)}^{-1}=W \big( \Id-T(u)W \big)^{-1}.
\]
\item[(ii)] The equation \eqref{SDE:rho} admits a unique pathwise solution \(u\mapsto \big( \rho(u),T(u) \big)\), which is a.s. well defined on all of \(\R_+\).
\end{itemize}
\end{thm}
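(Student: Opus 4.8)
\emph{Overview.} I would prove (i) by It\^o's formula followed by the Lamperti time change, handling the interaction term by means of the strong Markov property of Theorem~\ref{thm:Markov.X}, and then deduce (ii) from (i) together with a standard local Lipschitz argument.

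\emph{It\^o computation and time change.} By Lemma~\ref{lem:lim.u} each $U_i$ is a continuous increasing bijection of $[0,\tau_i)$ onto $[0,\infty)$, so $T_i$ is a continuous increasing bijection $[0,\infty)\to[0,\tau_i)$ with $T_i(u)<\tau_i$ for all $u$ and $T_i'(u)=X_i(T_i(u))^2=e^{2\rho_i(u)}$, which gives the second line of \eqref{SDE:rho}. On $[0,\tau_i)$, It\^o's formula applied to $\log X_i$ together with \eqref{SDE:X} gives
\[
\log X_i(t)=\log\theta_i+\int_0^t\frac{\ind_{s<\tau_i}}{X_i(s)}\,dB_i(s)-\int_0^t\frac{\ind_{s<\tau_i}}{X_i(s)}\big((W\psi)(s)+\eta\big)_i\,ds-\frac12\int_0^t\frac{\ind_{s<\tau_i}}{X_i(s)^2}\,ds .
\]
Taking $t=T_i(u)$ and substituting $s=T_i(v)$: the last integral becomes $-\tfrac12U_i(T_i(u))=-\tfrac12u$, the drift becomes $-\int_0^ue^{\rho_i(v)}\big((W\psi)(T_i(v))+\eta\big)_i\,dv$, and the stochastic integral becomes $M_i(T_i(u))$, where $M_i=\int_0^{\cdot}\frac{\ind_{s<\tau_i}}{X_i(s)}dB_i(s)$ has $\langle M_i\rangle=U_i$ and, since $\langle B_i,B_j\rangle=0$, $\langle M_i,M_j\rangle=0$ for $i\neq j$; by Knight's theorem (the multidimensional Dambis--Dubins--Schwarz theorem) the processes $\widetilde B_i(u):=M_i(T_i(u))$ form a standard $|V|$-dimensional Brownian motion. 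Thus $(\rho,T)$ solves \eqref{SDE:rho} except that the drift of $\rho_i$ is a priori $-\tfrac12-e^{\rho_i(u)}\big((W\psi)(T_i(u))+\eta\big)_i$.

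\emph{Reconciling the interaction term --- the main obstacle.} It remains to show that for all $i,u$,
\[
\big((W\psi)(T_i(u))+\eta\big)_i=\big(\widetilde W^{(u)}\big(e^{\rho(u)}+T(u)\eta\big)+\eta\big)_i,\qquad \widetilde W^{(u)}=WK_{T(u)}^{-1},
\]
which is precisely the point where the coordinatewise time change must be tamed. For fixed $u$, since $\{T_j(u)\le t_j\}=\{U_j(t_j)\ge u\}\in\sigma(X_j(s):s\le t_j)$, the vector $(T_j(u))_{j\in V}$ is a multi-stopping time, and $T(u)\wedge\tau=T(u)$ because $T_j(u)<\tau_j$; Theorem~\ref{thm:Markov.X} then gives that the shifted process $Y$ defined by $Y_i(t):=X_i(T_i(u)+t)$ is, conditionally on $\mathcal F^X_{(T_j(u))_j}$, distributed as the solution of $\big(E_V^{\widetilde W^{(u)},X(T(u)),\widetilde\eta^{(u)}}(X)\big)$, where $X(T(u))=e^{\rho(u)}$ and $\widetilde\eta^{(u)}=\eta+\widetilde W^{(u)}(T(u)\eta)$. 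Identifying the finite-variation part of the semimartingale $Y_i$ from these two descriptions --- from \eqref{SDE:X} it is $-\int_0^{\cdot}\ind_{T_i(u)+v<\tau_i}((W\psi)(T_i(u)+v)+\eta)_i\,dv$, and from the shifted equation it is $-\int_0^{\cdot}\ind_{v<\tau^Y_i}((\widetilde W^{(u)}\psi^Y)(v)+\widetilde\eta^{(u)})_i\,dv$ with $\psi^Y(0)=Y(0)=e^{\rho(u)}$ --- and differentiating at $v=0$ gives the identity for that $u$; it then holds for all $u$ simultaneously a.s.\ by continuity in $u$. This is the delicate step: it requires care in passing from the distributional strong Markov statement to an identification of drifts (using the pathwise uniqueness in Theorem~\ref{thm:SDE.X}(i), so that $Y$ is a genuine weak solution of the shifted equation). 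Alternatively one may bypass it via the mixture representation of Theorem~\ref{thm:SDE.X}(ii), time-changing each conditionally independent $3$-dimensional Bessel bridge $X_i$ (which satisfies $dX_i=dB_i+(\tfrac1{X_i}-\tfrac{X_i}{\tau_i-t})dt$) and then removing the conditioning on $\tau$. Substituting the identity into the system from the previous step finishes (i).

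\emph{Proof of (ii).} On the open set $\mathcal O=\{(r,t)\in\R^V\times\R_+^V:\det K_t\neq0\}$ the coefficients of \eqref{SDE:rho}, namely $-\tfrac12-e^{r_i}\big(WK_t^{-1}(e^r+t\eta)+\eta\big)_i$ and $e^{2r_i}$, are smooth, hence locally Lipschitz; so \eqref{SDE:rho} enjoys pathwise uniqueness and has a unique maximal solution from $(\log\theta,0)\in\mathcal O$, up to its first exit from $\mathcal O$ or explosion of $|\rho|$. By (i), $(\log X(T(u)),(T_j(u))_j)$ is such a solution: it is defined for all $u\ge0$, has $\rho(u)$ finite, and stays in $\mathcal O$ since $T_j(u)<\tau_j$ and $K_t$ is invertible for $t\le\tau$ (by \cite{SabZenEDS}). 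Hence, by pathwise uniqueness, it is the maximal solution and has infinite lifetime, which proves that \eqref{SDE:rho} has a unique pathwise solution, a.s.\ defined on all of $\R_+$.
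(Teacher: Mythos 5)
Your It\^o computation and Lamperti time change are correct as far as they go, and your argument for (ii) is essentially the paper's (locally Lipschitz coefficients on the domain $\{K_t\text{ invertible}\}$, plus the global solution supplied by (i)). The gap is in the reconciliation step for (i). You assert the pointwise identity
\[
\big((W\psi)(T_i(u))+\eta\big)_i \;=\; \big(\widetilde W^{(u)}\big(e^{\rho(u)}+T(u)\eta\big)+\eta\big)_i ,
\]
but this is \emph{false} for $|V|\geq 2$. The left side is built from the vector $\big(X_j(T_i(u))\big)_{j\in V}$ and the scalar time $T_i(u)$ inserted (coordinatewise, capped by $\tau_j$) into $K$, whereas the right side is built from $e^{\rho(u)}=\big(X_j(T_j(u))\big)_{j\in V}$ and the vector $T(u)$; for $j\neq i$ one has $T_i(u)\neq T_j(u)$ and $X_j(T_i(u))\neq X_j(T_j(u))$ in general. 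A direct check on a two-vertex graph already shows the two sides disagree, and the paper explicitly flags this mismatch (``the vector $X(T_i(w))$ is different from $e^{\rho(w)}$'') before resorting to the small-time limit.

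The root cause is a filtration mismatch that the ``identify the finite-variation part and differentiate at $v=0$'' step glosses over. The It\^o drift you obtain from \eqref{SDE:X} is the drift of $\rho_i$ in the \emph{full} filtration of $B$, while the theorem concerns the drift in $\mathcal F^{(\rho,T)}_u=\mathcal F^X_{T(u)}$. Since $X_j(T_i(u))$ is not $\mathcal F^X_{T(u)}$-measurable when $T_j(u)<T_i(u)$, the two finite-variation parts are genuinely different processes, and you cannot equate them pathwise; you must take a conditional expectation given $\mathcal F^X_{T(u)}$, which is precisely what the paper's infinitesimal-generator computation does using Theorem~\ref{thm:Markov.X} together with the limit $v\to u^+$. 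For the same reason the Knight's-theorem step is incomplete: $\widetilde B_i=M_i\circ T_i$ is a Brownian motion, but you have not shown it is an $\mathcal F^{(\rho,T)}$-martingale, and the gap between your It\^o drift and the theorem's drift is exactly the compensator arising when one passes to that smaller filtration. Your mentioned alternative via the mixture representation of Theorem~\ref{thm:SDE.X}(ii) and Girsanov is the paper's second proof and would close the gap, but you only sketch it.
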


As a consequence of Theorems \ref{thm:SDE.X}(ii) and \ref{thm:SDE.rho}, we can  relate the solutions of \eqref{SDE:rho} to time-changed Bessel bridges and the distribution \(\nu_V^{W,\theta,\eta}\). This is stated in Theorem \ref{thm:md.opp.dr} below, which is the multi-dimensional version of Theorem \ref{thm:opp.dr}.

\begin{thm}\label{thm:md.opp.dr}
Let \((\rho,T)\) be solution of \eqref{SDE:rho}.
\begin{itemize}
\item[(i)]
For all \(i\in V\), we have
\[
T_i(u)=\int_0^u e^{2\rho_i(v)}dv \xrightarrow[u\to\infty]{a.s.}T_i^{\infty},
\]
where \(\l(\frac{1}{2T_i^{\infty}}\r)_{i\in V}\) is distributed according to \(\nu_V^{W,\theta,\eta}\). \item[(ii)]
There exists a standard \(|V|\)-dimensional Brownian motion \(B^*\) which is independent of \(T^{\infty}\) such that for \(i\in V\) and \(u\geq 0\),
\[
\rho_i(u) =\log(\theta_i)+B^*_i(u)+\frac{1}{2}u+\log \left( \frac{T_i^{\infty}-T_i(u)}{T_i^{\infty}} \right).
\]
In particular, the processes \((\rho_i,T_i)_{i\in V}\) are independent conditionally on \(T^{\infty}\). 
\end{itemize}
\end{thm}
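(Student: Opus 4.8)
The plan is to transfer everything through the Lamperti time change, using Theorem \ref{thm:SDE.X}(ii) as the source of randomness description. First I would establish (i): by Theorem \ref{thm:SDE.rho}, $T_i(u) = \int_0^u e^{2\rho_i(v)}dv$ is nondecreasing in $u$, hence has an a.s. limit $T_i^\infty \in (0,\infty]$. Since $(\rho,T)$ is constructed (Theorem \ref{thm:SDE.rho}(i)) from a solution $X$ of \eqref{SDE:X} via $\rho_i(u) = \log X_i(T_i(u))$ where $T_i = (U_i|_{[0,\tau_i[})^{-1}$ and $U_i(t) = \int_0^t \ind_{s<\tau_i} X_i(s)^{-2}\,ds$, the change of variables $t = T_i(u)$, $dt = e^{2\rho_i(u)}du$ gives $T_i(u) \to \tau_i$ as $u \to \infty$, i.e. $T_i^\infty = \tau_i$. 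Then Theorem \ref{thm:SDE.X}(ii) tells us $\big(\tfrac{1}{2\tau_i}\big)_{i\in V} = \big(\tfrac{1}{2T_i^\infty}\big)_{i\in V}$ has law $\nu_V^{W,\theta,\eta}$, which is exactly (i). (One must be slightly careful that $\nu_V^{W,\theta,\eta}$ charges $\{H_\beta > 0\}$, so all $\tau_i$ are finite a.s., hence $T_i^\infty < \infty$ a.s.; this is where finiteness of the limit comes from.)

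For (ii), the strategy is: condition on $\tau = (\tau_i)_{i\in V}$ and apply the time change to the Bessel-bridge description. By Theorem \ref{thm:SDE.X}(ii), conditionally on $\tau$ the paths $(X_i(t))_{0\le t\le \tau_i}$ are independent $3$-dimensional Bessel bridges from $\theta_i$ to $0$ on $[0,\tau_i]$. I would now run the one-dimensional Lamperti computation coordinate by coordinate, exactly as in the sketch following Theorem \ref{thm:opp.dr}: for a $3$-dimensional Bessel bridge $X_i$ on $[0,\tau_i]$, setting $\rho_i(u) = \log X_i(T_i(u))$ with the time change $T_i$ above, Itô's formula (using that a Bessel bridge of dimension $3$ solves $dX_i = d\gamma_i + (\tfrac{1}{X_i} + \tfrac{-X_i}{\tau_i - t})dt$ for some Brownian motion $\gamma_i$) yields that $\rho_i$ satisfies
\[
\rho_i(u) = \log\theta_i + B_i^*(u) + \tfrac12 u + \log\!\Big(\tfrac{\tau_i - T_i(u)}{\tau_i}\Big),
\]
where $B_i^*(u) = \int_0^{T_i(u)} X_i(s)^{-1}\,d\gamma_i(s)$ has quadratic variation $\int_0^{T_i(u)} X_i(s)^{-2}\,ds = U_i(T_i(u)) = u$, so $B^*_i$ is a standard Brownian motion by Lévy's characterization. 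Since conditionally on $\tau$ the bridges $X_i$ are independent and each $\gamma_i$ is adapted to (a Brownian motion driving) $X_i$, the martingales $B_i^*$ are independent; moreover each $B_i^*$ is, conditionally on $\tau$, a Brownian motion, hence its conditional law does not depend on $\tau$, giving the claimed independence of $B^* = (B^*_i)_{i\in V}$ from $T^\infty = \tau$. Replacing $\tau_i$ by $T_i^\infty$ gives the stated formula. The final assertion — that $(\rho_i, T_i)_{i\in V}$ are independent conditionally on $T^\infty$ — is immediate since, given $T^\infty$, each pair $(\rho_i, T_i)$ is a deterministic functional of $B_i^*$ alone (solve the closed one-dimensional equation above together with $T_i(v) = \int_0^v e^{2\rho_i(u)}du$), and the $B_i^*$ are conditionally independent.

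There is one genuine subtlety I expect to be the main obstacle: I must check that the $\rho$ produced this way is really \emph{the} solution of \eqref{SDE:rho}, i.e. that the two systems — the "prospective" interacting SDE \eqref{SDE:rho} driven by $\widetilde B$, and the "conditioned" decoupled description via $B^*$ — describe the same process. This is the multi-dimensional analogue of matching \eqref{eq-rho-u} with \eqref{eq-rho-u-cond}. The cleanest route is to invoke pathwise uniqueness: Theorem \ref{thm:SDE.rho}(ii) gives uniqueness for \eqref{SDE:rho}, and by construction $\rho_i(u) = \log X_i(T_i(u))$ with $X$ a solution of \eqref{SDE:X} solves \eqref{SDE:rho}; so the $\rho$ in Theorem \ref{thm:md.opp.dr} is forced to be this one, and we have just computed its conditional law given $\tau$. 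The bookkeeping to confirm that the drift in \eqref{SDE:rho}, namely $-\tfrac12 - e^{\rho_i(u)}\big(\widetilde W^{(u)}(e^{\rho(u)} + T(u)\eta) + \eta\big)_i$ with $\widetilde W^{(u)} = WK_{T(u)}^{-1}$, is exactly what Itô's formula produces from \eqref{SDE:X} after the time change — in particular that the interaction term $(W\psi)(t)$ with $\psi(t) = K_{t\wedge\tau}^{-1}(X(t) + (t\wedge\tau)\eta)$ transforms into the $\widetilde W^{(u)}$-term — is the routine-but-delicate part; it is precisely where the strong Markov property of Theorem \ref{thm:Markov.X} (with $\widetilde W^{(T)} = W(K_{T\wedge\tau})^{-1}$, $\widetilde\eta^{(T)} = \eta + \widetilde W^{(T)}((T\wedge\tau)\eta)$) makes the two pictures compatible, since the time-changed interaction at level $u$ only sees the past $T(u)$ of each coordinate. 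I would present this matching once and carefully, then let uniqueness do the rest.
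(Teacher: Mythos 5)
Your proposal is correct and matches the paper's proof in all essentials: both identify the solution $(\rho,T)$ of \eqref{SDE:rho} with the time-changed solution $X$ of \eqref{SDE:X} via pathwise uniqueness (Theorem \ref{thm:SDE.rho}), invoke Theorem \ref{thm:SDE.X}(ii) for the law of $\tau=T^\infty$ and the conditional Bessel-bridge mixture, and then carry out the one-dimensional Lamperti/Itô computation coordinate by coordinate (which the paper already records as Equation \eqref{tc.Bb} inside the proof of Lemma \ref{lem:lim.u}). The only cosmetic difference is that the paper simply cites \eqref{tc.Bb} rather than re-deriving the Bessel-bridge Itô step, and it leaves the independence of $B^*$ from $T^\infty$ implicit in the mixture description, which you spell out.
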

The multidimensional counterpart of Theorem \ref{thm:matyorcond} is as follows.
\begin{thm}\label{thm:md.matyorcond}
Let \((\rho,T)\) be a solution of \(E_V^{W,\theta,\eta}(\rho)\). For every \(u\geq0\) and for every \(i\in V\), we define \(Z_i(u)=T_i(u)\exp(-\rho_i(u))\). Let us denote by \((\mathcal{Z}_u)_{u\geq 0}\) the natural filtration associated with \((Z(u))_{u\geq 0}\). Then, it holds that
\begin{enumerate}[(i)]
\item For every \(u>0\), \[\mathcal{Z}_u\subsetneqq\sigma(\rho_v,v\leq u).\]
\item The process \((Z(u))_{u\geq 0}\) has independent components and for every \(i\in V\), there exists a Brownian motion \(\widehat{B}_i\) such that \((Z_i(u))_{u\geq 0}\) is solution of the SDE
\[dZ_i(u)=Z_i(u)d\widehat{B}_i(u)+(\theta_i+Z_i(u))du.\]
\item \((Z(u))_{u\geq 0}\) is independent of \(T^{\infty}\).
\item \label{iv-MY} For every \(u>0\), let us define \(\beta(u)=(\beta_i(u))_{i\in V}=(1/(2T_i(u))_{i\in V}\). Then, for every \(u>0\), the conditional law of \(\beta(u)\) given \(\mathcal{Z}_u\) is
\[\mathcal{L}\left(\beta(u)\Bigg|\mathcal{Z}_u,Z(u)=z\right)=\nu_V^{W,\theta,\eta+1/z}\]
where \(\eta+1/z\) is the vector \((\eta_i+1/z_i)_{i\in V}\).

\end{enumerate}
We emphasize the fact that correlations coming from interacting drifts is contained in the conditional law in \ref{iv-MY}.
\end{thm}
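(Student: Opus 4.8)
The plan is to reduce everything to the explicit description of $(\rho,T)$ conditionally on $T^{\infty}$ provided by Theorem \ref{thm:md.opp.dr}(ii), and to recognize $Z$ as a functional which is measurable with respect to a strictly smaller filtration. First I would compute the SDE satisfied by $Z_i(u)=T_i(u)e^{-\rho_i(u)}$ by applying Itô's formula to the pair $(\rho,T)$ solving \eqref{SDE:rho}. Since $dT_i(u)=e^{2\rho_i(u)}du$ has no martingale part, the martingale part of $dZ_i$ comes only from $-Z_i\,d\widetilde B_i$, and the finite-variation part should simplify: the interaction term appearing in the drift of $\rho_i$ is multiplied by $e^{\rho_i}$ in $dZ_i$, which turns it into $-Z_i e^{\rho_i}(\widetilde W^{(u)}(e^{\rho(u)}+T(u)\eta)+\eta)_i\,du$; combining with the contribution of $dT_i$, namely $e^{\rho_i}du$, and with the $+\tfrac12 Z_i$ and quadratic-variation terms, one expects all the interaction terms to cancel and leave exactly $dZ_i(u)=Z_i(u)\,d\widehat B_i(u)+(\theta_i+Z_i(u))\,du$ after setting $\widehat B_i=-\widetilde B_i$. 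This cancellation is the computational heart of part (ii); it is essentially the same miracle that, in dimension one, turns the Matsumoto--Yor functional into an autonomous diffusion, and I expect it to be the step requiring the most care, precisely because the naive time change acts differently on each coordinate and only the specific form of $\widetilde W^{(u)}=WK_{T(u)}^{-1}$ makes it work. The independence of the components of $Z$ then follows because each $Z_i$ satisfies an autonomous one-dimensional SDE driven by $\widehat B_i$, and the $\widehat B_i$ are the coordinates of a standard $|V|$-dimensional Brownian motion.

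For part (iii), independence of $(Z(u))_{u\ge 0}$ from $T^{\infty}$, I would use Theorem \ref{thm:md.opp.dr}(ii): conditionally on $T^{\infty}$, one has $\rho_i(u)=\log\theta_i+B^*_i(u)+\tfrac12 u+\log\big((T_i^{\infty}-T_i(u))/T_i^{\infty}\big)$ with $B^*$ a Brownian motion independent of $T^{\infty}$. Writing $e^{\rho_i(u)}=\theta_i e^{B^*_i(u)+u/2}(T_i^{\infty}-T_i(u))/T_i^{\infty}$ and $T_i(u)=\int_0^u e^{2\rho_i(v)}dv$, one sees that $Y_i(u):=T_i(u)/T_i^{\infty}$ solves an ODE (driven by $B^*_i$) in which $T_i^{\infty}$ enters only through $Y_i$ itself — indeed, differentiating, $Y_i'(u)=e^{2\rho_i(u)}/T_i^{\infty}=\theta_i^2 e^{2B^*_i(u)+u}(1-Y_i(u))^2 T_i^{\infty}/T_i^{\infty}\cdot\frac{1}{T_i^\infty}$; more cleanly, $Z_i(u)=T_i(u)e^{-\rho_i(u)}=\frac{T_i^{\infty}Y_i(u)}{\theta_i e^{B^*_i(u)+u/2}(1-Y_i(u))}$, and one checks that the equation for $Z_i$ in terms of $B^*_i$ involves neither $T_i^{\infty}$ nor $\theta_i^{-1}$ as free parameters once written out (this is consistent with the autonomous SDE from (ii)). Hence $(Z(u))_{u\ge 0}$ is a measurable functional of $B^*$ alone, which is independent of $T^{\infty}$. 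Part (i), that $\mathcal Z_u\subsetneq\sigma(\rho_v,v\le u)$, then follows: the filtration generated by $Z$ is contained in that generated by $B^*$, which is strictly smaller than $\sigma(\rho_v,v\le u)=\sigma(B^*_v,v\le u)\vee\sigma(T^{\infty})$ since $T^{\infty}$ is non-degenerate and independent of $B^*$ — alternatively, argue as in the one-dimensional case that $Z$ does not determine the driving Brownian motion $\widetilde B$ on $[0,u]$.

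For part (iv), the identification of the conditional law of $\beta(u)=(1/(2T_i(u)))_{i\in V}$ given $\mathcal Z_u$ and $Z(u)=z$ as $\nu_V^{W,\theta,\eta+1/z}$, the plan is to combine the strong Markov property of Theorem \ref{thm:Markov.X} with Theorem \ref{thm:SDE.X}(ii). Fix $u$ and apply Theorem \ref{thm:Markov.X} at the multi-stopping time $T(u)$ (which is a bounded, hence finite, multi-stopping time for $X$, once one checks that $u\mapsto T(u)$ produces a genuine multi-stopping time — this requires a small argument using $U_i(t)=\int_0^t\ind_{s<\tau_i}X_i(s)^{-2}ds$ and the continuity/monotonicity of $U_i$): the shifted process $Y_i(t)=X_i(T_i(u)+t)$ is, conditionally on $\mathcal F^X_{T(u)}$, a solution of $E_V^{\widetilde W^{(u)},X(T(u)),\widetilde\eta^{(u)}}(\cdot)$ with $X(T(u))=e^{\rho(u)}$ and $\widetilde\eta^{(u)}=\eta+\widetilde W^{(u)}(T(u)\eta)$. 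By Theorem \ref{thm:SDE.X}(ii) applied to this shifted equation, the vector $(1/(2(\tau_i-T_i(u))))_{i\in V}$ has law $\nu_V^{\widetilde W^{(u)},e^{\rho(u)},\widetilde\eta^{(u)}}$ conditionally on $\mathcal F^X_{T(u)}$. Now $\beta_i(u)=1/(2T_i^{\infty})$ (note $T_i(u)$ in the statement should be read as $T_i^{\infty}$ — I would keep the paper's notation but use part (i) to rewrite), and $T_i^{\infty}-T_i(u)=\tau_i-T_i(u)$ in the original time scale after the change of variables; so the conditional law of $\beta(u)$ given $\mathcal F^X_{T(u)}$ is a deterministic transform of $\nu_V^{\widetilde W^{(u)},e^{\rho(u)},\widetilde\eta^{(u)}}$. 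The final step is an explicit change-of-variables identity on the $\nu$-measures: one must show that the pushforward of $\nu_V^{\widetilde W^{(u)},e^{\rho(u)},\widetilde\eta^{(u)}}$ under the appropriate affine map $\beta\mapsto(\text{original }\beta)$ equals $\nu_V^{W,\theta,\eta+1/z}$ where $z=Z(u)=T(u)e^{-\rho(u)}$; here $\widetilde W^{(u)}=WK_{T(u)}^{-1}$, $T(u)=z\,e^{\rho(u)}$ (diagonal), and $\theta=e^{\rho(u)}\cdot(\text{factor})$ — the substitution should reproduce precisely the parameters $(W,\theta,\eta+1/z)$, with the extra $1/z$ accounting for the boundary contribution of $\widetilde\eta^{(u)}$ and the normalizations of the $\nu$-measure. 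That this identity holds is exactly the content of the covariance of the $\nu$-family under the restriction/Markov operation in \cite{SabZenEDS}, and verifying it (matching the exponential quadratic forms $\langle\theta,H_\beta\theta\rangle$, $\langle\eta,H_\beta^{-1}\eta\rangle$ and the determinant prefactors under the affine substitution) is the main obstacle in part (iv). Finally, since the right-hand side $\nu_V^{W,\theta,\eta+1/z}$ depends on $\mathcal F^X_{T(u)}$ only through $z=Z(u)$, the conditional law given the smaller $\sigma$-algebra $\mathcal Z_u$ (together with $Z(u)=z$) is the same, which is the claim; the last sentence of the theorem is then just the observation that $\nu_V^{W,\theta,\eta+1/z}$ is genuinely a measure on $\R^V$ with correlations governed by $W$.
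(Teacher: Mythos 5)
Your proposal breaks down in two of the four parts, and in both cases the gap is a substantive misunderstanding rather than a detail to be filled in.

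\textbf{Part (ii).} The Itô computation you propose does not produce the autonomous SDE, and no cancellation occurs. Writing $\phi^{(u)}=K_{T(u)}^{-1}\bigl(e^{\rho(u)}+T(u)\eta\bigr)$, the drift term in $d\rho_i$ equals $-\tfrac12-e^{\rho_i}\bigl(W\phi^{(u)}+\eta\bigr)_i$, and a direct application of Itô to $Z_i=T_ie^{-\rho_i}$ gives
\[
dZ_i(u)=-Z_i(u)\,d\widetilde B_i(u)+\Bigl(e^{\rho_i(u)}+T_i(u)\bigl(W\phi^{(u)}+\eta\bigr)_i+Z_i(u)\Bigr)du
=-Z_i(u)\,d\widetilde B_i(u)+\bigl(\phi^{(u)}_i+Z_i(u)\bigr)du,
\]
where the last equality uses $K_{T(u)}\phi^{(u)}=e^{\rho(u)}+T(u)\eta$. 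But $\phi^{(u)}_i$ is \emph{not} the constant $\theta_i$: it is a genuine local martingale with $\phi^{(0)}_i=\theta_i$ (one checks $d\phi^{(u)}_i=e^{\rho_i(u)}\,d\widetilde B_i(u)$ in the one-dimensional case, and similarly in general). Even in dimension one with $W=0$, $\eta=0$ the naive computation gives $dZ=-Z\,dB+(e^{\rho}+Z)\,du$, not $dZ=Z\,d\widehat B+(\theta+Z)\,du$. The Brownian motion $\widehat B_i$ of the statement is emphatically not $-\widetilde B_i$: it is a different Brownian motion, adapted to the strictly smaller filtration $\mathcal Z_u$, and obtaining it is precisely the content of the one-dimensional Matsumoto--Yor theorem (Theorem~\ref{thm:matyorcond}(ii)). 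The paper's proof of (ii) does not touch the multi-dimensional SDE at all; it goes through Theorem~\ref{thm:md.opp.dr}(ii) to write $\rho_i$ conditionally on $T^{\infty}$ as a drifted Brownian motion driven by $B^*$, checks that $Z_i=\theta_iZ_i^*$ where $Z_i^*=T_i^*/e_i^*$ is exactly the one-dimensional Matsumoto--Yor functional of $B^*_i$ with $\mu=1/2$, and then invokes Theorem~\ref{thm:matyorcond}(ii) together with $K_{3/2}/K_{1/2}(x)=1+1/x$.

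\textbf{Part (iv).} There is no misprint: $\beta_i(u)=1/(2T_i(u))$, not $1/(2T_i^{\infty})$, and your proposed route cannot give the answer. Conditioning on $\mathcal F^X_{T(u)}$ via the strong Markov property is circular here, because $T(u)$, and hence $\beta(u)$, is $\mathcal F^X_{T(u)}$-measurable: the conditional law you would obtain is a Dirac mass, not $\nu_V^{W,\theta,\eta+1/z}$. The strong Markov property at $T(u)$ only gives the conditional law of $1/(2(T^{\infty}-T(u)))$, which is what enters Theorem~\ref{thm:equalities_in_law}, not Theorem~\ref{thm:md.matyorcond}(iv). The actual mechanism is different: from the time-changed formula one obtains the pointwise identity $1/T_i(u)=1/T_i^{\infty}+1/(\theta_i^2T_i^*(u))$, in which the two summands are \emph{conditionally independent} given $\mathcal Z_u$ (because $T^{\infty}$ is independent of $B^*$, hence of $(Z^*,T^*)$). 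One then multiplies the Laplace transform of $1/(2T^{\infty})$, which is $\nu_V^{W,\theta,\eta}$ by Theorem~\ref{thm:md.opp.dr}(i), by the product over $i$ of the conditional Laplace transforms of $1/(2\theta_i^2T_i^*(u))$ given $Z_i^*(u)=z_i/\theta_i$; the latter is an Inverse Gaussian, supplied by the one-dimensional Theorem~\ref{thm:matyorcond}(iii). The product of these Laplace transforms is that of $\nu_V^{W,\theta,\eta+1/z}$. The extra $1/z$ in the parameter thus comes from the site-by-site independent $T^*$ factor, not from a change of variables on $\nu$-measures under the Markov shift.

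\textbf{Parts (i) and (iii).} Your argument for (iii), that $Z$ is a functional of $B^*$ alone, is the paper's argument. For (i), however, your proposed identity $\sigma(\rho_v,v\le u)=\sigma(B^*_v,v\le u)\vee\sigma(T^{\infty})$ is false: the left side is strictly contained in the right side (that $T^{\infty}$ is not $\sigma(\rho_v,v\le u)$-measurable is exactly what Theorem~\ref{thm:md.opp.dr} asserts). The paper gets (i) for free from (iv): since the conditional law of $\beta(u)=1/(2T(u))$ given $\mathcal Z_u$ is non-degenerate while $T(u)$ is $\sigma(\rho_v,v\le u)$-measurable, the inclusion is strict.
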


If \(\rho \) be a solution of \(E_V^{W,\theta,\eta}(\rho)\), for every \(u\geq0\) and for every \(i\in V\), we define \(Z_i(u)=T_i(u)\exp(-\rho_i(u))\).
An important consequence of the previous result is that \((\rho,T)\) and \(Z\) are related via Markov intertwinings which generalize Theorem \ref{thm:intertwinning1d}.

More precisely, by Theorem \ref{thm:md.matyorcond}, \((Z(u))_{u\geq 0}\) is a Markov process in its own sigma-field \((\mathcal{Z}_u)_{u\geq 0}\). Let us denote its semigroup by \((Q_u)_{u\geq 0}\). Moreover, by Theorem \ref{thm:SDE.rho}, \((\rho,T)\) is also a Markov process. Let us denote its semigroup by \((P_u)_{u\geq 0}\).
\begin{thm}\label{thm:intertwinnings}Let \(\theta\in(\R_+^*)^V \) and \(\eta\in(\R_+)^V\). Let \((\rho,T)\) be a solution of \(E_V^{W,\theta,\eta}(\rho))\) and let \(Z=Te^{-\rho}\).  Then, \((\rho,T)\) and \(Z\) are intertwinned in the following sense: for every \(u\in\R_+\),
\[Q_{u}\circ K=K\circ P_{u}\]
where for every measurable function \(g\) from \((\R^V)^2\) into \(\R_+\),
\[K(g)(z)= \int g\left(\Big(-\ln({2\beta_i z_i})\Big)_{i\in V},\left(\frac{1}{2\beta_i} \right)_{i\in V} \right)d\nu_V^{W,\theta,\eta+1/z}(\beta). \]
\end{thm}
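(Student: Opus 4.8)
The plan is to deduce the intertwining identity $Q_u\circ K = K\circ P_u$ from the conditional Matsumoto--Yor property in Theorem \ref{thm:md.matyorcond}, in exactly the way Theorem \ref{thm:intertwinning1d} follows from Theorem \ref{thm:matyorcond} in dimension one. The first step is to recognize the kernel $K$ probabilistically: by Theorem \ref{thm:md.matyorcond}\ref{iv-MY}, if $\beta(u)=(1/(2T_i(u)))_{i\in V}$ then the conditional law of $\beta(u)$ given $\{Z(u)=z\}$ is $\nu_V^{W,\theta,\eta+1/z}$, and since $\rho_i(u) = -\log(Z_i(u)) + \log(T_i(u)) = -\log(2\beta_i(u)z_i)$ and $T_i(u) = 1/(2\beta_i(u))$, the kernel $K$ is precisely the conditional law of $(\rho(u),T(u))$ given $\{Z(u)=z\}$. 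In other words, $K$ is the disintegration kernel expressing $(\rho(u),T(u))$ from $Z(u)$, and this holds for \emph{every} $u>0$ (in particular the law of $Z(u)$ does not appear, only the conditional law, which is the content of the statement that the correlations are captured by $\nu_V^{W,\theta,\eta+1/z}$).

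The core of the argument is then a Markov-functional / filtering identity. By Theorem \ref{thm:md.matyorcond}(ii), $(Z(u))_{u\ge 0}$ is Markov in its own filtration $(\mathcal Z_u)$, with semigroup $Q_u$; by Theorem \ref{thm:SDE.rho}, $(\rho,T)$ is Markov with semigroup $P_u$. Fix a bounded measurable $g$ on $(\R^V)^2$. For $s,u\ge 0$ I would compute $\E\big[g(\rho(s+u),T(s+u))\,\big|\,\mathcal Z_s\big]$ in two ways. On one hand, conditioning first on $\mathcal Z_{s+u}$ and using that $K$ is the conditional law of $(\rho(s+u),T(s+u))$ given $Z(s+u)$ together with the Markov property of $Z$, one gets $(Q_u(Kg))(Z(s))$. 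On the other hand, conditioning first on $\sigma(\rho_v,T_v:v\le s)\supseteq\mathcal Z_s$ and using the Markov property of $(\rho,T)$ gives $\E\big[(P_u g)(\rho(s),T(s))\,\big|\,\mathcal Z_s\big] = (K(P_u g))(Z(s))$, again because $K$ is the conditional law of $(\rho(s),T(s))$ given $Z(s)$. Equating the two and using that $Z(s)$ can be taken to have full support (or taking $s=0$ and letting the initial condition, i.e.\ $\theta$ and $\eta$, vary so that $Z(0)$ ranges over a determining set; note $Z_i(0)=T_i(0)e^{-\rho_i(0)}=0$, so one must instead argue by a density/continuity argument in $u$ or invoke that the identity of kernels composed with $Q$ resp.\ $K$ holds $\mathcal L(Z(s))$-a.e.\ for all $s$, and the family of possible laws of $Z(s)$ is rich enough) yields $Q_u\circ K = K\circ P_u$.

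The main obstacle is the bookkeeping around the "full support" issue: because $Z_i(0)=0$ deterministically, one cannot simply read off the identity at a fixed starting point; instead the cleanest route is to prove the identity as an a.s.\ identity of functions evaluated at $Z(s)$ for every $s>0$, then observe that as $s$ ranges over $(0,\infty)$ (or as $(\theta,\eta)$ ranges over $(\R_+^*)^V\times(\R_+)^V$) the laws $\mathcal L(Z(s))$ have supports whose union is dense in $(\R_+^*)^V$, and that both $z\mapsto (Q_u(Kg))(z)$ and $z\mapsto (K(P_u g))(z)$ are continuous in $z$ for $g$ continuous and bounded — the continuity of $K$ in $z$ following from the explicit form of $\nu_V^{W,\theta,\eta+1/z}$ and dominated convergence, and continuity of $Q_u$ from the SDE in Theorem \ref{thm:md.matyorcond}(ii). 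A secondary, more technical point is justifying the interchange of conditional expectations, i.e.\ that $\mathcal Z_{s+u}\supseteq\mathcal Z_s$ and $\sigma(\rho_v,T_v:v\le s)\supseteq\mathcal Z_s$ with the appropriate measurability, and that the conditional law of $(\rho(s+u),T(s+u))$ given $\mathcal Z_{s+u}$ equals its conditional law given $Z(s+u)$ alone — this last fact is again Theorem \ref{thm:md.matyorcond}\ref{iv-MY} applied at time $s+u$, using that the stated conditional law depends on $\mathcal Z_{s+u}$ only through $Z(s+u)$.
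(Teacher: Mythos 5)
Your proposal takes exactly the paper's route: you compute $\E\bigl[g(\rho(s+u),T(s+u))\mid\mathcal{Z}_s\bigr]$ in two ways by interchanging the order of conditioning, using Theorem \ref{thm:md.matyorcond}(iv) to read $K$ as the conditional law of $(\rho,T)$ given $Z$ and the Markov properties of $(\rho,T)$ and of $Z$ to insert $P_u$ and $Q_u$ respectively, yielding $Q_u K g(Z(s)) = K P_u g(Z(s))$ almost surely. The full-support/regularity issue you flag in passing from this a.s.\ identity at $Z(v)$ to an identity of kernels is a fair observation, but the paper's own proof leaves it implicit and stops at the same almost-sure equality, so your plan is in line with the published argument.
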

The kernel \(K\) in Theorem \ref{thm:intertwinnings} comes from (iv) in Theorem \ref{thm:md.matyorcond}.
Moreover, thanks to Theorem \ref{thm:md.matyorcond}, we are able to give some new identities involving the measure \(\nu_V^{W,\theta,\eta}\). These identities look like other identities in law which are known in the \(1\)-dimensional case. More details are given in the next section.
\begin{thm}\label{thm:equalities_in_law}
Let \(\theta\in(\R_+^*)^V \), \(\eta\in\R_+^V\) and \(z\in (\R_+^*)^V\). On the one hand, let \((\beta_i)_{i\in V}\) be a random potential with distribution \(\nu_V^{W,\theta,\eta+1/z}\). Conditionally on \(\beta\), let \((\alpha_i)_{i\in V}\) be a random potential with distribution \(\nu_V^{\widetilde{W},\widetilde{\theta},\widetilde{\eta}}\) where \(\widetilde{W}=WK_{1/(2\beta)}^{-1}\), \(\widetilde{\eta}=\eta+WH_{\beta}^{-1}\eta\) and for every \(i\in V\), \(\widetilde{\theta}_i=1/(2\beta_i z_i)\). On the other hand, let \((\delta_i)_{i\in V}\) be a random potential with distribution \(\nu_V^{W,\theta,\eta}\). Let \((A_i)_{i\in V}\) be a family of independent random variables such that for every \(i\in V\), \(A_i\) is distributed like \(IG(1/(\theta_iz_i),1/z_i^2)\). Moreover we assume that \((A_i)_{i\in V}\) is independent of \(\delta\). Then it holds that,

\[\left(\left(2\beta_i\right)_{i\in V}, \left(\frac{(2\beta_i)^2}{2\alpha_i}\right)_{i\in V}\right)\overset{law}=\left(\left(2\delta_i+A_i\right)_{i\in V},\left(A_i+\frac{A_i^2}{2\delta_i} \right)_{i\in V} \right).\]
\end{thm}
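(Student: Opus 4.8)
The plan is to recognize the left-hand side as the joint law of $(2\beta(u), 1/(2T^\infty))$ -- or rather a two-step conditioned version of it -- arising from the strong Markov property of the interacting Brownian motions, and to recognize the right-hand side as the same object computed via the time-changed picture (Theorem \ref{thm:md.opp.dr} and Theorem \ref{thm:md.matyorcond}). Concretely, fix $u>0$ and let $(\rho,T)$ solve $E_V^{W,\theta,\eta}(\rho)$. Write $Z(u)=T(u)e^{-\rho(u)}$ and $\beta(u)=(1/(2T_i(u)))_{i\in V}$. By Theorem \ref{thm:md.matyorcond}\ref{iv-MY}, conditionally on $\{Z(u)=z\}$ (and on $\mathcal Z_u$), $\beta(u)$ has law $\nu_V^{W,\theta,\eta+1/z}$; this gives the first potential $\beta$ on the left side. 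For the second potential, I would invoke the strong Markov property of Theorem \ref{thm:Markov.X}: conditionally on $\mathcal F^X_{T(u)}$ (equivalently on $T(u)$, hence on $\beta(u)$), the shifted process $Y_i(t)=X_i(T_i(u)+t)$ solves $E_V^{\widetilde W^{(T(u))},X(T(u)),\widetilde\eta^{(T(u))}}(X)$, so by Theorem \ref{thm:SDE.X}(ii) its inverse hitting times $(1/(2\sigma_i))_{i\in V}$ have law $\nu_V^{\widetilde W,\widetilde\theta,\widetilde\eta}$ with $\widetilde W=WK_{1/(2\beta)}^{-1}$, $\widetilde\theta=X(T(u))=e^{\rho(u)}=T(u)/Z(u)=(1/(2\beta_iz_i))_i$, and $\widetilde\eta=\eta+\widetilde W((T(u))\eta)=\eta+WK_{1/(2\beta)}^{-1}(1/(2\beta))\eta=\eta+WH_\beta^{-1}\eta$ (using $K_{1/(2\beta)}^{-1}\cdot\tfrac1{2\beta}=(2\beta)^{-1}K_{1/(2\beta)}^{-1}$... more carefully $\tfrac{1}{2\beta}H_{1/(2\beta)}=K_{1/(2\beta)}$, so $K_{1/(2\beta)}^{-1}\tfrac1{2\beta}=H_\beta^{-1}$). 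Thus $\alpha:=(1/(2\sigma_i))_i$ has exactly the conditional law $\nu_V^{\widetilde W,\widetilde\theta,\widetilde\eta}$ claimed on the left-hand side.

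Next I would identify what these $\sigma_i$ are in the time-changed coordinates. Since $T_i^\infty=\tau_i$ and the full hitting time of $Y_i$ to $0$ is $\tau_i - T_i(u)$, we get $1/(2\sigma_i) = \tfrac12(\tau_i-T_i(u)) = 1/(2\beta_i^\infty)-1/(2\beta_i(u))$ where $\beta^\infty=(1/(2T_i^\infty))_i$. Hence, after a short algebra, the left-hand pair $\bigl((2\beta_i)_i,\ ((2\beta_i)^2/(2\alpha_i))_i\bigr)$ becomes $\bigl((1/T_i(u))_i,\ (1/T_i(u))^2\cdot(T_i^\infty-T_i(u))\bigr)$, i.e. it is a deterministic function of $\bigl(T(u), T^\infty\bigr)$ -- the pair "current inverse time, total inverse time" -- computed under the conditional law given $Z(u)=z$. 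So the entire left-hand side equals the law of that function of $(T(u),T^\infty)$ under $\P(\,\cdot\mid Z(u)=z)$.

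Now for the right-hand side I would use Theorem \ref{thm:md.matyorcond}(iii): $Z(u)$ is independent of $T^\infty$, hence conditioning on $Z(u)=z$ does not change the law of $T^\infty$, which by Theorem \ref{thm:md.opp.dr}(i) is such that $(1/(2T_i^\infty))_i\sim\nu_V^{W,\theta,\eta}$; this produces the potential $\delta$. It remains to compute the conditional law of $T(u)$ given $Z(u)=z$ and $T^\infty$. From Theorem \ref{thm:md.matyorcond}(ii), conditionally on $Z(u)=z$ the components $Z_i$ are independent and, together with $e^{\rho_i}=T_i/Z_i$, one reads off (this is the one-dimensional Matsumoto--Yor computation, component by component, applicable because Theorem \ref{thm:md.opp.dr}(ii) makes the $(\rho_i,T_i)$ conditionally independent given $T^\infty$) that the conditional law of $T_i(u)$ given $Z_i(u)=z_i$ and $T_i^\infty$ has $T_i(u)/(z_i(T_i^\infty - T_i(u)))$ — or the appropriate one-dimensional functional — distributed as an independent $IG(1/(\theta_iz_i),1/z_i^2)$; matching this against $A_i$ and solving for $T_i(u)$ in terms of $A_i$ and $T_i^\infty$ yields precisely $(2\delta_i+A_i, A_i+A_i^2/(2\delta_i))$ for the two coordinates. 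Assembling the three ingredients — $\delta$ from $T^\infty$, $A$ from the conditional law of $T(u)$, independence of $A$ and $\delta$ from parts (ii)--(iii) — gives the right-hand side, completing the equality.

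The main obstacle, and the step I would spend the most care on, is the bookkeeping of the parameter transformation under the strong Markov property: verifying that $\widetilde\eta^{(T(u))}$ equals $\eta+WH_\beta^{-1}\eta$ and that $X(T(u))=e^{\rho(u)}=(1/(2\beta_iz_i))_i$ exactly, i.e. that the conditional law $\nu_V^{\widetilde W,\widetilde\theta,\widetilde\eta}$ appearing on the left-hand side is literally the law delivered by Theorem \ref{thm:Markov.X} composed with Theorem \ref{thm:SDE.X}(ii), together with the translation $1/(2\alpha_i)=1/(2\beta_i^\infty)-1/(2\beta_i(u))$ and the ensuing algebraic identity $(2\beta_i)^2/(2\alpha_i)=A_i+A_i^2/(2\delta_i)$ once one substitutes the one-dimensional relation between $T_i(u)$, $A_i$ and $T_i^\infty$. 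The probabilistic content is entirely contained in Theorems \ref{thm:SDE.X}, \ref{thm:Markov.X}, \ref{thm:md.opp.dr} and \ref{thm:md.matyorcond}; what remains is to make sure every deterministic substitution lines up, in particular that the one-dimensional Matsumoto--Yor identity in law (reciprocal of Inverse Gaussian, cf. the discussion after Theorem \ref{thm:intertwinning1d} and \cite{matyor2,matsumoto2003interpretation}) is invoked in the correct parametrization $(1/(\theta_iz_i),1/z_i^2)$.
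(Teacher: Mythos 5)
Your proposal is correct and follows essentially the same route as the paper: both arguments realize the pair on each side as (conditional versions of) a deterministic function of $(T(u),T^\infty)$, using Theorem~\ref{thm:Markov.X} translated to the $\rho$-scale together with Theorem~\ref{thm:SDE.X}(ii) and Theorem~\ref{thm:md.opp.dr}(i) to produce $\alpha$ on the left, Theorem~\ref{thm:md.matyorcond}(iv) to produce $\beta$, independence of $Z$ and $T^\infty$ with $1/(2T^\infty)\sim\nu_V^{W,\theta,\eta}$ to produce $\delta$, and the one-dimensional $IG(1/(\theta_iz_i),1/z_i^2)$ conditional law (the paper's \eqref{matyor7bis}) to produce the $A_i$. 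The paper organizes the bookkeeping slightly differently, first writing an explicit a.s.\ identity (its \eqref{matyor9}) between the two functional forms and then conditioning both sides on $Z(u)=z$, but the probabilistic and algebraic ingredients, including the parameter identifications $\widetilde\theta_i=e^{\rho_i(u)}=1/(2\beta_iz_i)$ and $\widetilde\eta=\eta+WH_\beta^{-1}\eta$ and the final substitution yielding $(2\delta_i+A_i,\,A_i+A_i^2/(2\delta_i))$, are the same as yours.
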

\subsection{Two open questions}
\label{subsec:twoo}
\subsubsection*{The Matsumoto-Yor property}

The Gamma and Inverse Gaussian distributions, as well as the inverse Gamma and reciprocal Inverse Gaussian distributions, all fall into the family of the so-called generalized Inverse Gaussian distributions.

A random variable is said to have generalized Inverse Gaussian distribution with parameter \((q, a,b)\) where \(q\in \mathbb{R}\) and \(a,b>0\), and denoted \(\operatorname{GIG}(q, a,b)\) if it has the following density:
\begin{equation}
\label{equation-gig-density}
\left( \frac{a}{b} \right)^{q/2} \frac{1}{2K_q(\sqrt{ab})} t^{q-1} e^{-\frac{1}{2}(at+b/t)}\mathds{1}_{t>0}.
\end{equation}
In particular, we have the following special cases (where zero parameter is understood as limit, see e.g. ~\cite{norman1994johnson}):
\[
\operatorname{IG}\l(\frac{\theta}{\eta}, \theta^2\r)=\operatorname{GIG}\l(-\frac{1}{2}, \frac{\eta^2}{2}, \frac{\theta^2}{2}\r),\ \operatorname{Gamma}\l(\frac{1}{2} ,\theta^2\r)=GIG\l(\frac{1}{2}, 0, \frac{\theta^2}{2}\r)
\]
and
\[
X\sim \operatorname{GIG}\l(-\frac{1}{2},\frac{\eta^2}{2},\frac{\theta^2}{2}\r) \Leftrightarrow 1/X\sim \operatorname{GIG}\l(\frac{1}{2}, \frac{\theta^2}{2}, \frac{\eta^2}{2}\r).
\]
Define the last visit of 0 of our drifted Brownian motion to be \(\widetilde{\tau}=\sup\{t\ge 0:\ B_t+\theta-\eta t=0\}\).
By a time inversion argument, i.e. setting
\[\widetilde{B}_t=\begin{cases}-tB_{1/t} & t>0 \\ 0 & t=0\end{cases},\]
the Gaussian process \(\widetilde{B}\) is also a Brownian motion and we deduce that \({\widetilde{\tau}}^{-1}\) is the first visit time to 0 of \(\widetilde{B}_t+\eta-\theta t\), hence \(\widetilde{\tau}\) is \(\operatorname{GIG}(\frac{1}{2}, \frac{\eta^2}{2},\frac{\theta^2}{2})\) distributed, moreover, \(\widetilde{\tau}-\tau\) is \(\operatorname{Gamma}(\frac{1}{2}, \theta^2)\) distributed and by Strong Markov property of Brownian motion, it is independent of \(\tau\). There exists also an interpretation of the generalized Inverse Gausian distribution with  any index \(q\) as stopping time of some diffusion process \cite{BNBH,V91} but we focus on the case \(q=\pm1/2\) in this paper.

More generally, we have the following identity in distribution, which is known as the Matsumoto-Yor property~\cite{Wesolowski2007,matsumoto2003interpretation}:
\begin{propA}
	\label{my-prop}
	Let \((\tau,\widetilde{\tau})\) be a random vector, then there is equivalence between the following statements:
	\begin{itemize}
		\item[(i)] \(\left(\displaystyle \frac{1}{\tau},\widetilde{\tau}-\tau  \right)\sim \operatorname{GIG}(\frac{1}{2},\frac{\theta^2}{2},\frac{\eta^2}{2})\otimes \operatorname{Gamma}(\frac{1}{2}, \theta^2)\)
		\item[(ii)] \(\left(\displaystyle \frac{1}{\tau}-\frac{1}{\widetilde{\tau}}, \widetilde{\tau} \right)\sim \operatorname{Gamma}(\frac{1}{2}, \eta^2)\otimes \operatorname{GIG}(\frac{1}{2}, \frac{\eta^2}{2},\frac{\theta^2}{2})\).
	\end{itemize}
\end{propA}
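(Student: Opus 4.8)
The plan is to reduce the stated equivalence to a single identity between two explicit probability measures on a fixed domain, and then to prove that identity by exhibiting one random vector that realises both measures at once --- namely the pair formed by the first and last zeros of a drifted Brownian motion, for which all the needed marginal laws and independences have already been recorded above.

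Let \(D=\{(s,t):0<s<t\}\); under either \((i)\) or \((ii)\) one has \(0<\tau<\widetilde{\tau}\) a.s., since \(\widetilde{\tau}-\tau\) and \(\tfrac1\tau-\tfrac1{\widetilde{\tau}}\) are then a.s.\ positive. The map \(F_1\colon(a,b)\mapsto\bigl(\tfrac1a,\,\tfrac1a+b\bigr)\) is a bijection from \((0,\infty)^2\) onto \(D\), with inverse \((s,t)\mapsto(\tfrac1s,\,t-s)\); hence \((i)\) is \emph{equivalent} to the assertion that the law of \((\tau,\widetilde{\tau})\) equals \(P_1:=(F_1)_*\bigl(\operatorname{GIG}(\tfrac12,\tfrac{\theta^2}2,\tfrac{\eta^2}2)\otimes\operatorname{Gamma}(\tfrac12,\theta^2)\bigr)\). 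Likewise \(F_2\colon(c,d)\mapsto\bigl(\tfrac{d}{cd+1},\,d\bigr)\) is a bijection from \((0,\infty)^2\) onto \(D\), with inverse \((s,t)\mapsto(\tfrac1s-\tfrac1t,\,t)\), so \((ii)\) is equivalent to the law of \((\tau,\widetilde{\tau})\) being \(P_2:=(F_2)_*\bigl(\operatorname{Gamma}(\tfrac12,\eta^2)\otimes\operatorname{GIG}(\tfrac12,\tfrac{\eta^2}2,\tfrac{\theta^2}2)\bigr)\). Therefore \((i)\Leftrightarrow(ii)\) will hold for every random vector \((\tau,\widetilde{\tau})\) once we know that \(P_1=P_2\), and it remains only to establish this equality of measures on \(D\).

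To prove \(P_1=P_2\) it suffices to produce a single random vector having law \(P_1\) and law \(P_2\) simultaneously. Fix \(\eta>0\) and take \(\tau\) and \(\widetilde{\tau}\) to be the first and last zeros of \(t\mapsto B_t+\theta-\eta t\), as in the discussion preceding the proposition, so that \((\tau,\widetilde{\tau})\in D\) a.s. On one hand, by Proposition \ref{prop-1d-bmhit} and the reciprocal rule for \(\operatorname{GIG}(\pm\tfrac12,\cdot,\cdot)\), \(1/\tau\sim\operatorname{GIG}(\tfrac12,\tfrac{\theta^2}2,\tfrac{\eta^2}2)\), while \(\widetilde{\tau}-\tau\sim\operatorname{Gamma}(\tfrac12,\theta^2)\) and is independent of \(\tau\) by the strong Markov property; hence \((1/\tau,\widetilde{\tau}-\tau)\) has the product law defining \(P_1\), i.e.\ \((\tau,\widetilde{\tau})\sim P_1\). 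On the other hand, applying the time inversion \(\widetilde B_t=-tB_{1/t}\), the numbers \(1/\widetilde{\tau}\) and \(1/\tau\) are the first and last zeros of \(t\mapsto\widetilde B_t+\eta-\theta t\), a Brownian motion with the roles of \(\theta\) and \(\eta\) interchanged; the same two facts applied to that process give \(\widetilde{\tau}=(1/\widetilde{\tau})^{-1}\sim\operatorname{GIG}(\tfrac12,\tfrac{\eta^2}2,\tfrac{\theta^2}2)\) and \(1/\tau-1/\widetilde{\tau}\sim\operatorname{Gamma}(\tfrac12,\eta^2)\) independent of \(1/\widetilde{\tau}\), hence independent of \(\widetilde{\tau}\). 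Thus \((1/\tau-1/\widetilde{\tau},\widetilde{\tau})\) has the product law defining \(P_2\), i.e.\ \((\tau,\widetilde{\tau})\sim P_2\). Since a random vector has only one distribution, \(P_1=P_2\), which would complete the proof.

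I expect the only delicate points to be bookkeeping ones: verifying that \(F_1\) and \(F_2\) are bijections onto the \emph{same} region \(D\) (so that passing between the two coordinate systems loses no mass), carrying out the \((\theta,\eta)\leftrightarrow(\eta,\theta)\) swap in the time-inversion step correctly, and treating the degenerate case \(\eta=0\), in which \(\operatorname{Gamma}(\tfrac12,\eta^2)\) is not a genuine probability law and the last zero \(\widetilde{\tau}\) is a.s.\ infinite (this case, if it is to be included, should be recovered as the limit \(\eta\downarrow0\)). A more computational alternative, avoiding the Brownian realisation, is to check \(P_1=P_2\) directly by pushing \(\operatorname{GIG}(\tfrac12,\tfrac{\theta^2}2,\tfrac{\eta^2}2)\otimes\operatorname{Gamma}(\tfrac12,\theta^2)\) forward through \(\phi:=F_2^{-1}\circ F_1\colon(a,b)\mapsto\bigl(\tfrac{a^2b}{1+ab},\,\tfrac1a+b\bigr)\); as \(K_{\pm1/2}\) is elementary, the work there amounts only to the Jacobian and matching the exponents.
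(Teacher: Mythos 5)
The paper does not prove Proposition~\ref{my-prop}: it is quoted from \cite{Wesolowski2007,matsumoto2003interpretation}, with only an informal preamble describing the Brownian realisation (first zero \(\tau\), last zero \(\widetilde{\tau}\), time inversion, strong Markov property). Your proof is therefore a genuine supplement rather than a re-derivation, and its architecture is sound. Reducing the biconditional to the single measure identity \(P_1=P_2\) on \(D=\{0<s<t\}\) is exactly the right move: since \(F_1\) and \(F_2\) are measurable bijections from \((0,\infty)^2\) onto \(D\), each of (i) and (ii) pins down the law of \((\tau,\widetilde{\tau})\) completely, so the equivalence is precisely the statement that the two pushforwards agree. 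Establishing \(P_1=P_2\) by exhibiting a single random vector carrying both laws is clean, and the vector you pick --- the first and last zeros of \(B_t+\theta-\eta t\) --- is the one the paper itself suggests. The ingredients you invoke (\(1/\tau\sim\operatorname{GIG}\) from Proposition~\ref{prop-1d-bmhit} and the reciprocal rule; independence of \(\widetilde{\tau}-\tau\) from \(\tau\) via the strong Markov property at \(\tau\); the \(\theta\leftrightarrow\eta\) swap under time inversion giving the analogous statements for \((1/\widetilde{\tau},\,1/\tau-1/\widetilde{\tau})\)) all check out, and your flagging of the degenerate case \(\eta=0\) (last zero infinite, \(\operatorname{Gamma}(\tfrac12,0)\) not a law) is the correct caveat: the proposition should be read with \(\theta,\eta>0\).

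One warning you should carry forward if you write this out in full. The paper's own bookkeeping around this proposition is not self-consistent, and your proof inherits it verbatim. The last zero of a Brownian motion with drift \(-\eta\) started at the origin has density proportional to \(t^{-1/2}e^{-\eta^2 t/2}\), so the law of \(\widetilde{\tau}-\tau\) depends on \(\eta\), not \(\theta\); by the same token \(1/\tau-1/\widetilde{\tau}\) depends on \(\theta\), not \(\eta\). The two \(\operatorname{Gamma}\) parameters in the statement (and in the sentence preceding it) appear to have been swapped, and there is a separate factor-of-two mismatch between the displayed \(\operatorname{IG}\) density \eqref{T.dist} and the claimed identification \(\operatorname{IG}(\tfrac{\theta}{\eta},\theta^2)=\operatorname{GIG}(-\tfrac12,\tfrac{\eta^2}{2},\tfrac{\theta^2}{2})\) with the density \eqref{equation-gig-density}. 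None of this affects the logic of your argument --- the common-realisation strategy yields whatever parameters the two Markov/time-inversion computations actually produce --- but if you carry out the computational check you mention in the last paragraph (pushing the product law through \(\phi=F_2^{-1}\circ F_1\)), you will need to fix a single self-consistent convention for \(\operatorname{GIG}\) and \(\operatorname{Gamma}\) first, or the exponents will refuse to match.
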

It is not clear how we can get a multi-dimensional generalization of Proposition \ref{my-prop}. However, the identity of Theorem \ref{thm:equalities_in_law} can be rewritten as
\[\left(\frac{1}{A_i+2\delta_i},\frac{1}{A_i}-\frac{1}{A_i+2\delta_i} \right)_{i\in V}\overset{law}=\left(\frac{1}{2\beta_i},\frac{2\alpha_i}{(2\beta_i)^2}\right)_{i\in V}.\]
This identity can be viewed as a weaker form of (i) in Proposition \ref{my-prop} where we lost the independence property.
Recall that for every \(i\in V\), \(1/(2\beta_i)\) is the first hitting time of zero of \(X_i\) (that is, a drifted Brownian motion with interaction). Moreover, \(1/A_i\) is distributed like \(GIG\left(1/2,\frac{1}{2z_i^2},\frac{\theta_i^2}{2}\right)\), that is, like the last hitting time of zero by a Brownian motion with drift \(1/z_i\) without interaction. Perhaps the identity of Theorem \ref{thm:equalities_in_law} can be interpreted in term of a coupling between interacting and non-interacting Brownian motions. 

Properties in Proposition~\ref{my-prop} has been called the Matsumoto--Yor (MY) property by Stirzaker \cite{Stirzaker2005} p. 43. Letac and Wesolowski \cite{LetacWe2000} provided a characterization theorem related to MY property, namely, if \(\tau,\widetilde{\tau}\) are random variables s.t. \(\frac{1}{\tau},\widetilde{\tau}-\tau\) are independent and \(\frac{1}{\tau}-\frac{1}{\widetilde{\tau}},\widetilde{\tau}\) are independent, then they necessarily follow the law prescribed in Proposition~\ref{my-prop}. It is tempting to say that a multi-dimensional counterpart of such characterization law also holds.
\subsubsection*{An opposite-drift theorem for other values of the drift}

The multi-dimensional opposite-drift theorem proved in this paper is limited to the case of the drift \(-\frac{1}{2}\), since it results from Theorem \ref{thm:SDE.X}, which concerns Bessel processes with index \(-\frac{1}{2}\) and \(\frac{1}{2}\) (\textit{i.e.} Brownian motion and \(3\)-dimensional Bessel bridges). We could try to obtain a similar result for other values of the drift \(\mu\). This necessitates the use of a random potential analogous to \(\beta\), whose marginals would relate to the hitting times of Bessel processes with other indices, that is, generalized Inverse Gaussian distributions. A natural candidate for the distribution of the potential associated with the drift \(-\mu\) with \(\mu>0\) is the measure \(\nu_{V,\mu}^{W,\eta}\) with density:
\[\nu_{V,\mu}^{W,\theta,\eta}(d\beta)=C(\mu,W,\eta)\ind_{H_\beta>0}\l(\frac{2}{\pi}\r)^{|V|/2}\exp\l(-\frac{1}{2}\langle\theta,H_\beta\theta\rangle-\frac{1}{2}\langle\eta,(H_\beta)^{-1}\eta\rangle\r)\frac{\prod_{i\in V}\theta_i}{|H_\beta|^{\mu-1}}d\beta\] 
where \(C(\mu,W,\eta)\) is a normalizing constant.
Nonetheless, the explicit density of \(\beta\) plays an important role in our proof. When \(\mu\ne \frac{1}{2}\), its normalizing constant \(C(\mu,W,\eta)\) is no longer a constant, but depends on the underlying graph and our proof does not apply directly to such cases.

The case of index \(\mu=\frac{3}{2}\) might be solvable thanks to recent developments by Bauerschmidt, Crawford, Helmuth and Swan in \cite{BCHS19}, and by Crawford in \cite{Cra19}. These articles concern other sigma models, in particular \(\mathbb{H}^{2|4}\), which is related to random spanning forests, The normalizing  constant \(C(3/2,W,\eta)\) of \(\beta\) associated to this model is the partition function of random forests. It might be a candidate if one looks for a generalization of the \(\beta\) potential corresponding to index \(\frac{3}{2}\). Moreover, the SDE given by \((i)\) in Theorem \ref{thm:SDE.rho} should be much more complicated.

\section{Multi-dimensional time change : Proof of Theorem \ref{thm:SDE.rho} and Theorem \ref{thm:md.opp.dr}}
\subsection{Justification of the Lamperti time change}
Recall that
\[
U_i(t)= \int_0^t \frac{\ind_{s<\tau_i}}{X_i(s)^2}ds
\]
where \((X_i)_{i\in V}\) is a solution of \eqref{SDE:X}.
\begin{lem}\label{lem:lim.u}
For any \(i\in V\),
\[
\lim_{t\to \tau_i} U_i(t)=+\infty,
\]
consequently \(U_i:[0,\tau_i[\,\to[0,+\infty[\) is a.s. a bijection.
\end{lem}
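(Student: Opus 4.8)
The plan is to reduce the claim, via the mixture representation of Theorem~\ref{thm:SDE.X}(ii), to a classical statement about the $3$-dimensional Bessel process started from $0$. Fix $i\in V$ and condition on the full vector $\tau=(\tau_j)_{j\in V}$; by Theorem~\ref{thm:SDE.X}(ii) the path $(X_i(t))_{0\le t\le\tau_i}$ is then a $3$-dimensional Bessel bridge from $\theta_i$ to $0$ on $[0,\tau_i]$. Since this path is continuous and strictly positive on $[0,\tau_i[$ with $X_i(0)=\theta_i>0$, the integrand $\ind_{s<\tau_i}/X_i(s)^2$ is bounded on every interval $[0,\tau_i-\delta]$, so $\lim_{t\to\tau_i}U_i(t)=\int_0^{\tau_i}X_i(s)^{-2}\,ds$ is finite if and only if the integral converges at the endpoint $\tau_i$. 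Everything therefore reduces to the behaviour of the Bessel bridge as it reaches $0$.

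To handle that endpoint I would time-reverse. A $3$-dimensional Bessel bridge is reversible (BES$(3)$ is symmetric with respect to its speed measure, proportional to $x^{2}\,dx$), so $Y(t):=X_i(\tau_i-t)$, $t\in[0,\tau_i]$, is a $3$-dimensional Bessel bridge from $0$ to $\theta_i$ on $[0,\tau_i]$, and after the change of variable $s=\tau_i-t$ it suffices to prove that $\int_0^{\epsilon}Y(t)^{-2}\,dt=+\infty$ a.s.\ for some fixed $\epsilon\in\,]0,\tau_i[$. By the Markov property of BES$(3)$, the law of $(Y(t))_{0\le t\le\epsilon}$ is absolutely continuous with respect to the law of a BES$(3)$ started from $0$ and run on $[0,\epsilon]$, with density given by the ratio of Bessel transition densities $q_{\tau_i-\epsilon}(Y(\epsilon),\theta_i)/q_{\tau_i}(0,\theta_i)$, which is a.s.\ finite since $Y(\epsilon)>0$. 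So it is enough to show that if $R$ is a BES$(3)$ started from $0$, then $\int_0^{\epsilon}R(s)^{-2}\,ds=+\infty$ almost surely.

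For this last point I would use Brownian scaling. Writing $\int_0^{\epsilon}R(s)^{-2}\,ds=\sum_{k\ge 0}I_k$ with $I_k=\int_{2^{-k-1}\epsilon}^{2^{-k}\epsilon}R(s)^{-2}\,ds$, the substitution $s=2^{-k}\epsilon\,t$ together with the scaling identity $(\lambda^{-1}R(\lambda^2 t))_{t\ge 0}\overset{law}{=}(R(t))_{t\ge 0}$ shows that each $I_k$ has the law of the fixed random variable $J:=\int_{1/2}^{1}R(s)^{-2}\,ds$, which is a.s.\ finite and a.s.\ strictly positive because $R$ is continuous, positive and bounded on $[1/2,1]$. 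A sequence of random variables all distributed like $J$ cannot converge to $0$, so $\P(\sum_k I_k<\infty)<1$; and since the event $\{\int_0^{\epsilon}R(s)^{-2}\,ds=+\infty\}$ is the same (up to a null set) for every $\epsilon>0$, it belongs to the germ $\sigma$-field $\bigcap_{\epsilon>0}\sigma(R(s):s\le\epsilon)$, so Blumenthal's $0$--$1$ law raises its probability to $1$. The conditional statement thus holds for a.e.\ value of $\tau$, hence unconditionally; and since $U_i$ is continuous, strictly increasing, vanishes at $0$ and tends to $+\infty$ as $t\to\tau_i$, it is a homeomorphism of $[0,\tau_i[$ onto $[0,+\infty[$, which is the second assertion.

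The main obstacle is this final BES$(3)$-from-$0$ statement: the scaling identity by itself only gives a positive lower bound for $\P(\int_0^{\epsilon}R(s)^{-2}\,ds=+\infty)$, and the upgrade to probability $1$ really does require a zero--one law (Blumenthal's, or equivalently Jeulin's lemma). One should also record the standard absolute-continuity fact used at the endpoint and check that $\tau\mapsto\P\bigl(\int_0^{\tau_i}X_i(s)^{-2}\,ds=+\infty\mid\tau\bigr)$ is measurable, so that integrating over $\tau$ is legitimate.
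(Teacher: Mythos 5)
Your proposal is correct, and it reduces to the $3$-dimensional Bessel bridge via Theorem~\ref{thm:SDE.X}(ii) exactly as the paper does; but after that the two arguments diverge. The paper applies It\^o to $\log X_i$ under the Bessel-bridge SDE, obtaining the identity $\log X_i(t)=\log\theta_i+\widehat{B}(U_i(t))+\tfrac12 U_i(t)+\log\!\left(\tfrac{\tau_i-t}{\tau_i}\right)$ (their equation~\eqref{tc.Bb}), then uses the classical representation $X_i(t)=(\tau_i-t)Y\!\left(\tfrac{t}{\tau_i(\tau_i-t)}\right)$ of a bridge in terms of a $\mathrm{BES}(3)$ process $Y$ to see that $X_i(t)/(\tau_i-t)\to\infty$; since $u\mapsto\widehat{B}(u)+\tfrac12 u$ cannot explode at finite $u$, this forces $U_i(t)\to\infty$. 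You instead time-reverse the bridge to a bridge from $0$ to $\theta_i$, compare it on an initial window $[0,\epsilon]$ with $\mathrm{BES}(3)$ started from $0$ via an $h$-transform absolute-continuity argument, and reduce to the fact that $\int_0^\epsilon R(s)^{-2}\,ds=\infty$ a.s.\ for $\mathrm{BES}(3)$ from $0$, proved by the dyadic scaling plus a reverse-Fatou bound and Blumenthal's $0$--$1$ law. Both routes are sound. The paper's version is more economical in its prerequisites (no bridge reversibility, no Blumenthal) and has a collateral payoff: the displayed identity~\eqref{tc.Bb} is reused later in the proof of Theorem~\ref{thm:md.opp.dr} and in the Girsanov argument, so the It\^o computation is not wasted work. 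Your version isolates the purely local statement about $\mathrm{BES}(3)$ near $0$, which makes the source of the divergence conceptually transparent, at the cost of invoking two classical facts (bridge time-reversal symmetry and the $0$--$1$ law) that the paper manages to avoid. One small remark: in the reverse-Fatou step you do not need $\P(\sum_k I_k<\infty)<1$ as a separate observation; the bound $\P(\limsup_k\{I_k>c\})\geq\limsup_k\P(I_k>c)=\P(J>c)>0$ already gives positive probability of divergence directly, and the $I_k$ need not be independent for this.
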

\begin{proof}
Let \(X=(X_i)_{i\in V}\) be a solution of \eqref{SDE:X}. According to Theorem \ref{thm:SDE.X}, conditionally on \((\tau_i)_{i\in V}\), the trajectories \((X_i(t))_{0\leq t\leq \tau_i}\) are independent three-dimensional Bessel bridges. As a consequence, to prove Lemma \ref{lem:lim.u}, it suffices to show the same result for a three-dimensional Bessel bridge.

Fix \(\theta>0\) and \(\tau>0\), let \(X\) be a three-dimensional Bessel bridge from \(\theta\) to \(0\)  in the time interval \([0,\tau]\). 
Since \(X\) is a three-dimensional Bessel bridge, there exists a standard Brownian motion \(B\) such that
\[
dX(t)=dB(t)+\frac{1}{X(t)}dt-\frac{X(t)}{\tau-t}dt,
\]
therefore by Ito's lemma, for \(t<\tau\),
\begin{align*}
d\log(X(t)) &= \frac{dB(t)}{X(t)}+\frac{dt}{X(t)^2}-\frac{dt}{\tau-t} -\frac{1}{2}\frac{dt}{X(t)^2} \\
&= dM(t) +\frac{1}{2}dU(t) +d\log\l(\tau-t\r),
\end{align*}
where \(M(t)=\int_0^t \frac{dB(t)}{X(t)}\) is a martingale, and \(\langle M \rangle_t=U(t)\). Therefore, there exists a standard Brownian motion \(\widehat{B}\) such that \(M(t)=\widehat{B}(U(t))\). Thus, for \(t\geq 0\), we have
\begin{equation}\label{tc.Bb}
\log(X(t))=\log(\theta)+\widehat{B}(U(t))+\frac{1}{2}U(t)+\log\l(\frac{\tau-t}{\tau}\r),
\end{equation}
\textit{i.e.}
\[
\frac{X(t)}{\tau-t}=\frac{\theta}{\tau}e^{\widehat{B}(U(t))+\frac{1}{2}U(t)}.
\]
On the other hand, since \(X\) is a three-dimensional Bessel bridge, there exists (see \cite{RY13} p.467) a three-dimensional Bessel process \(Y\) such that for \(t\geq 0\),
\[
X(t)=(\tau-t)Y\l(\frac{t}{\tau(\tau-t)}\r).
\]
Therefore, when \(t\to \tau\), we have a.s. \(\frac{X(t)}{\tau-t}\to+\infty\). Since \(u\mapsto\widehat{B}(u)+\frac{1}{2}u\) cannot explode in finite time, we have necessarily 
\[
U(t)\xrightarrow[t\to \tau]{a.s.}+\infty.
\]
\end{proof}

\subsection{Proof of Theorem \ref{thm:SDE.rho}}
\begin{proof}[Proof of Theorem \ref{thm:SDE.rho} (ii)]
Assume that Theorem \ref{thm:SDE.rho} (i) is proven, \textit{i.e.} that \eqref{SDE:rho} has almost surely a unique pathwise solution defined on all of \(\R_+\). Let \(\widetilde{B}\) be a \(|V|\)-dimensional Brownian motion. Thanks to Theorem \ref{thm:SDE.rho} (i), we know that \eqref{SDE:rho} admits a solution that is well defined on \(\R_+\). Let us now show that this solution is necessarily unique.

Let \((\rho^*,T^*)\) be another solution of \eqref{SDE:rho} with the Brownian motion \(\widetilde{B}\). Let also \(\mathcal{K}\) be a compact subset of \(\R^V\times \{t\in\R_+^V, K_t>0\}\) containing \((\log(\theta_i),0)_{i\in V}\). Then the function
\[
\begin{cases}
\mathcal{K}  \to \R^V\times\R^V \\
(\rho,t)  \mapsto \bigg( -\frac{1}{2} - e^{\rho_i} \Big( W K_t^{-1} (e^\rho+t\eta) +\eta \Big)_i \, , \, e^{2\rho_i} \bigg)_{i\in V}
\end{cases}
\]
is bounded and Lipschitz. Therefore, up to the stopping time \(U_\mathcal{K}=\inf\{u\geq 0, (\rho(u),T(u))\notin\mathcal{K}\}\), we have \((\rho(u),T(u))=(\rho^*(u),T^*(u))\) from Theorem 2.1, p.375 of \cite{RY13}. Since this is true for all compact subset \(\mathcal{K}\) of \(\R^V\times \{t\in\R_+^V, K_t>0\}\), we have a.s. \((\rho,T)=(\rho^*,T^*)\). This concludes the proof of Theorem \ref{thm:SDE.rho} (ii).
\end{proof}

Let's prove Theorem \ref{thm:SDE.rho} (i). Let \(B\) be a standard \(|V|\)-dimensional Brownian motion, and let \((X_i)_{i\in V}\) be a solution of \eqref{SDE:X}. For \(i\in V\), recall that \(T_i\) is the inverse function of
\[
U_i: \begin{cases}
\l[0,T^{\infty}_i\r[ \to  \l[0,+\infty\r[\\
t  \mapsto \int_0^t \frac{ds}{X_i(s)^2}
\end{cases}
\]
and \(\rho_i(u)=\log\big(X_i(T_i(u))\big)\) for \(u\geq 0\). In order to show that \((\rho,T)\) is solution of \eqref{SDE:rho}, we want to apply the same time change as in Lamperti's relation. However, in the equation \eqref{SDE:X}, the interactive drifts provided by \(\psi(t)\) to each coordinates \(X_i(t)\) are calculated at the same time \(t\geq 0\), while in Lamperti's time scale, i.e. for \(X_i(t)=e^{\rho_i(U_i(t))}\), the times \(U_i(t)\) are different at each coordinates \(i\in V\).

We present here two different ways to overcome this problem. The first proof relies on identifying the infinitesimal generator of the process \((\rho_i,T_i)_{i\in V}\), using the strong Markov property presented in Theorem \ref{thm:Markov.X}. The second one uses Theorem \ref{thm:SDE.X} (ii), i.e. \(X\) is a mixture of independent Bessel bridges, to which we can apply the time change separately, and then identify the law of the annealed process using Girsanov's theorem.

\subsection{First proof of Theorem \ref{thm:SDE.rho} (i) : using the strong Markov property of Theorem \ref{thm:Markov.X}}
\begin{proof}[Proof of Theorem \ref{thm:SDE.rho} (i)]
Firstly, let \(u\geq 0\) be fixed, and \(f:\R^V\times\R^V\to \R\) be a compactly supported \(\mathcal{C}^2\) function. To identify the infinitesimal generator of \((\rho,T)\), let us compute
\[
\lim_{v\to u^+}\frac{\E[f(\rho(v),T(v))|\mathcal{F}^{(\rho,T)}_u]-f(\rho(u),T(u))}{v-u}.
\]
Note that \((T_i(u))_{i\in V}\) is a multi-stopping time in the sense of Definition \ref{defi:multistop} and that \(\mathcal{F}^{(\rho,T)}_u=\mathcal{F}^X_{T(u)}\). Define
\[
\widetilde{W}^{(u)}= W \big( K_{T(u)} \big)^{-1}, \; \widetilde{K}^{(u)}_t = \Id-t\widetilde{W}^{(u)}, \text{ and } \widetilde{\eta}^{(u)}= \eta + \widetilde{W}^{(u)}(T(u)\eta).
\]
Thanks to Theorem \ref{thm:Markov.X}, conditionally on \(\mathcal{F}^X_{T(u)}\), the shifted process 
\[
Y=Y^{(u)}:t\mapsto \big( X_i(T_i(u)+t) \big)_{i\in V}
\]
is the solution of
\[
\begin{cases}
dY_i(t) = \ind_{t\leq\widehat{T}^0_i}d\widehat{B}_i(t) -\ind_{t\leq\widehat{T}^0_i} \l( \widetilde{W}^{(u)} (\widetilde{K}^{(u)}_t)^{-1} \l(Y(t)+(t\wedge \widehat{T}^0)\widetilde{\eta}^{(u)}\r) +\widetilde{\eta}^{(u)} \r)_i dt, & i\in V, t\ge 0\\
Y_i(0) = X_i(T_i(u)), & i\in V
\end{cases}
\]
where \(\widehat{B}\) is a \(|V|\)-dimensional standard Brownian motion, independent of \(\mathcal{F}^X_{T(u)}\), and \(\widehat{T}^0_i\) is the first hitting time of \(0\) by \(Y_i\). 

Fix \(v>u\), define the interrupted process
\[
Z=Z^{(u,v)}:t\mapsto \Big( X_i \big( (T_i(u)+t)\wedge T_i(v) \big)  \Big)_{i\in V}.
\]
For all \(i\in V\) and \(t\geq 0\), we have \(Z_i(t)=Y_i\l(t\wedge \widehat{T}_i(v) \r)\), where \(\widehat{T}_i(v)=T_i(v)-T_i(u)\). Therefore, \(Z\) is the solution of 
\[
\begin{cases}
dZ_i(t) = \ind_{t\leq \widehat{T}_i(v)}d\widehat{B}_i(t) -\ind_{t\leq \widehat{T}_i(v)}\l( \widetilde{W}^{(u)} (\widetilde{K}^{(u)}_t)^{-1} \l(Y(t)+(t\wedge \widehat{T}^0)\widetilde{\eta}^{(u)}\r) +\widetilde{\eta}^{(u)} \r)_i dt, & i\in V,\ t\ge 0\\
Z_i(0) = X_i(T_i(u)), & i\in V.
\end{cases}
\]
Moreover, since \(\widehat{T}_i(v)<\widehat{T}_i^0<\infty\) a.s. for all \(i\in V\), there exists a.s. \(\widehat{T}^\infty\) large enough so that \(Z_i(t)=Y_i(\widehat{T}_i(v))=X_i(T_i(v))\) for all \(i\in V\) and \(t\geq\widehat{T}^\infty\).

By Ito's lemma, for all \(t\geq 0\) we have
\begin{align*}
d\log(Z_i(t)) =&\, \ind_{t\leq \widehat{T}_i(v)} \frac{d\widehat{B}_i(t)}{Z_i(t)} - \ind_{t\leq \widehat{T}_i(v)}\frac{dt}{2 Z_i(t)^2}\\
&  -\ind_{t\leq \widehat{T}_i(v)}\l( \widetilde{W}^{(u)} (\widetilde{K}^{(u)}_t)^{-1} \l(Y(t)+(t\wedge \widehat{T}^0)\widetilde{\eta}^{(u)}\r) +\widetilde{\eta}^{(u)} \r)_i \frac{dt}{Z_i(t)},
\end{align*}
where we can replace \(t\wedge \widehat{T}^0\) with \(t\), as \(\widehat{T}_i(v)<\widehat{T}^0\). For \(i\in V\), let \(\widehat{M}_i\) be the martingale \(\widehat{M}_i(t)=\int_0^t \frac{d\widehat{B}_i(s)}{Z_i(s)}\) for \(t\geq 0\).

If we denote 
\[
\Phi(t)= \big(\log(Z_i(t)), (T_i(u)+t)\wedge T_i(v)\big)_{i\in V}\in\R^V\times\R^V
\]
for \(t\geq 0\), then, applying Ito's lemma to \(t\mapsto f(\Phi(t))\), we get
\begin{align*}
f(\Phi(t)) &- f(\Phi(0)) = \sum_{i\in V} \int_0^{t\wedge  \widehat{T}_i(v)} \frac{\partial f}{\partial \rho_i}(\Phi(s)) d\widehat{M}_i(s) + \sum_{i\in V} \int_0^{t\wedge  \widehat{T}_i(v)} \frac{1}{2}\frac{\partial^2 f}{\partial\rho_i^2}(\Phi(s))\frac{ds }{Z_i(s)^2} \\
&+ \sum_{i\in V} \int_0^{t\wedge \widehat{T}_i(v)} \frac{\partial f}{\partial \rho_i}(\Phi(s)) \l(-\frac{1}{2}- Z_i(s)\l( \widetilde{W}^{(u)} (\widetilde{K}^{(u)}_s)^{-1} \big(Y(s)+s\widetilde{\eta}^{(u)}\big) +\widetilde{\eta}^{(u)} \r)_i\r) \frac{ds}{Z_i(s)^2} \\
&+ \sum_{i\in V} \int_0^{t\wedge  \widehat{T}_i(v)}\frac{\partial f}{\partial t_i}(\Phi(s))ds.
\end{align*}
Taking \(t\geq\widehat{T}^\infty\), we get \(t\wedge \widehat{T}_i(v)=\widehat{T}_i(v)\) for all \(i\in V\), and 
\[
f(\Phi(t))-f(\Phi(0))=f(\rho(v),T(v))-f(\rho(u),T(u)),
\]
since \(\rho_i(w)=\log\l(X_i(T_i(w))\r)\) for \(w\in\R_+\) and \(i\in V\). For all \(i\in V\), we can now use the following time change in the corresponding integrals above : \(s=T_i(w)-T_i(u)=\widehat{T}_i(w)\), \textit{i.e.} \(w=U_i(T_i(u)+s)\). Note that for \(0\leq s \leq \widehat{T}_i(v)\),
\begin{align*}
\frac{d}{ds}U_i(T_i(u)+s) =\frac{1}{X_i(T_i(u)+s)^2}=\frac{1}{Z_i(s)^2},
\end{align*}
and for \(u\leq w \leq v\),
\[\frac{d}{dw}\widehat{T}_i(w) =X_i(T_i(w))^2=e^{2\rho_i(w)}.\]
Thus,
\begin{align*}
f\big(\rho(v),&T(v)\big)- f\big(\rho(u),T(u)\big) \\
=&\, \sum_{i\in V} \bigg( \int_u^v \frac{\partial f}{\partial \rho_i}\big(\rho(w),T(w)\big) d\widehat{M}_i(\widehat{T}_i(w)) + \int_u^v \frac{1}{2}\frac{\partial^2 f}{\partial\rho_i^2}\big(\rho(w),T(w)\big)dw  \\
&+ \int_u^v \frac{\partial f}{\partial \rho_i}\big(\rho(w),T(w)\big) \l(-\frac{1}{2}- e^{\rho_i(w)}\l( \widetilde{W}^{(u)} (\widetilde{K}^{(u)}_{\widehat{T}_i(w)})^{-1} \big(X(T_i(w))+\widehat{T}_i(w)\widetilde{\eta}^{(u)}\big) +\widetilde{\eta}^{(u)} \r)_i\r) dw \\
&+ \int_u^v \frac{\partial f}{\partial t_i}\big(\rho(w),T(w)\big)e^{2\rho_i(w)}dw \bigg).
\end{align*}
Note that, the vector \(X(T_i(w))=\big( X_j(T_i(w)) \big)_{j\in V}\) is different from \(e^{\rho(w)}=\big( X_j(T_j(w)) \big)_{j\in V}\). This is why we need to take \(v\to u\) and identify the generator.

Since \(\widehat{B}\) is independent from  \(\mathcal{F}^{X}_{T(u)}\), we have
\[
\E \l[ \int_u^v \frac{\partial f}{\partial \rho_i}\big(\rho(w),T(w)\big) d\widehat{M}_i(\widehat{T}_i(w)) \middle| \mathcal{F}^{(\rho,T)}_u\r]=\E\l[ \int_0^{\widehat{T}_i(v)} \frac{\partial f}{\partial \rho_i}(\Phi(t)) \frac{d\widehat{B}_i(s)}{Z_i(s)}\middle| \mathcal{F}^{X}_{T(u)}\r]=0
\]
for all \(i\in V\), and therefore
\begin{flalign*}
  &\E \Big[ f\big(\rho(v),T(v)\big) -f\big(\rho(u),T(u)\big) \Big| \mathcal{F}^{(\rho,T)}_u \Big] &\ & \\
  =& \E\Bigg[\sum_{i\in V} \Bigg(  \int_u^v \frac{1}{2}\frac{\partial^2 f}{\partial\rho_i^2}\big(\rho(w),T(w)\big)dw& \\
  &\; + \int_u^v \frac{\partial f}{\partial \rho_i}\big(\rho(w),T(w)\big) \l(-\frac{1}{2}- e^{\rho_i(w)}\l( \widetilde{W}^{(u)} (\widetilde{K}^{(u)}_{\widehat{T}_i(w)})^{-1} \big(X(T_i(w))+\widehat{T}_i(w)\widetilde{\eta}^{(u)}\big) +\widetilde{\eta}^{(u)} \r)_i\r) dw &\\
  &\; + \int_u^v \frac{\partial f}{\partial t_i}\big(\rho(w),T(w)\big)e^{2\rho_i(w)}dw \Bigg)\Bigg|\rho(u),T(u)\Bigg].
\end{flalign*}
By continuity and dominated convergence, we conclude that
\begin{align*}
  &\lim_{v\to u^+} \frac{1}{v-u} {\E\l[ f\big(\rho(v),T(v)\big) \middle| \mathcal{F}^{(\rho,T)}_u\r]  -f\big(\rho(u),T(u)\big)}\\
  &= \sum_{i\in V}\Bigg( \frac{1}{2}\frac{\partial^2 f}{\partial\rho_i^2}\big(\rho(u),T(u)\big)\\
&\; \; + \frac{\partial f}{\partial \rho_i}\big(\rho(u),T(u)\big) \bigg( -\frac{1}{2}- e^{\rho_i(u)} \big( \widetilde{W}^{(u)} e^{\rho(u)}+\widetilde{\eta}^{(u)} \big)_i \bigg) + \frac{\partial f}{\partial t_i}\big(\rho(u),T(u)\big) e^{2\rho_i(u)}\Bigg),
\end{align*}
which is \(\mathcal{L}f(u)\), where \(\mathcal{L}\) is the infinitesimal generator associated with the system of SDEs \eqref{SDE:rho}.
\end{proof}
\subsection{Second proof of Theorem \ref{thm:SDE.rho} (i) : using the mixing measure and Girsanov's theorem}
This proof follows the same structure as that of Theorem \ref{thm:SDE.X}. We start from the distribution of the process as a mixture of simpler quenched processes, and we compute the integral in order to identify the annealed distribution, using Girsanov's theorem.
\begin{proof}[Alternative proof of Theorem \ref{thm:SDE.rho} (i)]
Let \(X=(X_i(t))_{i\in V, t\geq 0}\) be the canonical process in \(\mathcal{C}(\R_+,\R^V)\), and \(\P\) be the distribution on \(\mathcal{C}(\R_+,\R^V)\) under which \(X\) is solution of \eqref{SDE:X}. According to Theorem \ref{thm:SDE.X} (ii), the vector \((\beta_i)_{i\in V}=\l(\frac{1}{2\tau_i}\r)_{i\in V}=\l(\frac{1}{2T_i^{\infty}}\r)_{i\in V}\) has distribution \(\nu_V^{W,\theta,\eta}\). Moreover, conditionally on \((T_i^{\infty})_{i\in V}\), the marginal processes \(X_i\) for \(i\in V\) are independent \(3\)-dimensional Bessel bridges from \(\theta_i\) to \(0\) on \([0,T_i^{\infty}]\). In other words, we can write
\[
\P[\cdot]=\int \l(\bigotimes_{i\in V} \P_i^{\beta_i} [\cdot] \r) \nu_V^{W,\theta,\eta}(d\beta),
\]
where for \(i\in V\), \(\P_i^{\beta_i}\) is the distribution on \(\mathcal{C}(\R_+,\R)\) under which the canonical process \(X_i\) is a \(3\)-dimensional Bessel bridge from \(\theta_i\) to \(0\) on \([0,T_i^{\infty}]\).

Conditionally on \((\beta_i)_{i\in V}\), we can apply the time change independently to each marginal \(X_i\). According to the computations done in the proof of Lemma \ref{lem:lim.u}, In particular, Equation \eqref{tc.Bb}, we know that under \(\P_i^{\beta_i}\), there exists a standard Brownian motion \(\widehat{B}_i\) such that
\[
\rho_i(u) = \log(\theta_i) + \widehat{B}_i(u) + \frac{1}{2}u  + \log\l(\frac{T_i^{\infty}-T_i(u)}{T_i^{\infty}}\r), u\ge 0
\]
where \(T_i^{\infty}=\frac{1}{2\beta}\) and \(T_i(u)=\int_0^u e^{2\rho_i(v)}dv\), \textit{i.e.}
\[
\rho_i(u) = \log(\theta_i) + \widehat{B}_i(u) + \frac{1}{2}u  + \log\l(1-2\beta_i\int_0^u e^{2\rho_i(v)} dv \r).
\]
For each \(i\in V\), define a martingale \(L_i\) by
\[
L_i(u)= \int_0^u \l( -\frac{1}{2} + \frac{2\beta_i e^{2\rho_i(v)}}{1-2\beta_i\int_0^v e^{2\rho_i(s)}ds} \r) d\widehat{B}_i(v),\ u\ge 0.
\]
Clearly \(\rho_i(u)= \widehat{B}_i(u) - \langle\widehat{B}_i,L_i\rangle_u\). We can then introduce a probability distribution \(\widehat\P_i\) such that for all \(u\geq 0\),
\[
\E\l[\frac{d\widehat\P_i}{d\P_i^{\beta_i}}\middle|\calF^i_u\r]=\calE(L_i)(u),
\]
where \(\calF^i_u=\sigma\l(\widehat{B}_i(v),0\leq v\leq u\r)\), and \(\calE(L_i)(u)=e^{L_i(u)-\frac{1}{2}\langle L_i,L_i\rangle_u}\) is the exponential martingale associated with \(L_i\). By Girsanov's theorem, \(\rho_i\) is a standard Brownian motion under \(\widehat\P_i\). Note that \(\widehat\P_i\) does not depend on \(\beta_i\).

From now on, let us write \(\phi_i(u)=1-2\beta_i\int_0^u e^{2\rho_i(v)}dv=\frac{T_i^{\infty}-T_i(u)}{T_i^{\infty}}\) for \(u\geq 0\) and \(i\in V\). The following lemma gives an expression of \(\calE(L_i)\).

\begin{lem}\label{lemEiu}
For \(i\in V\) and \(u\geq 0\), define
\[
E_i(u)=\exp\l(-\theta_i^2\beta_i +\frac{\beta_i e^{2\rho_i(u)}}{\phi_i(u)} -\frac{1}{2}\rho_i(u)+\frac{1}{8}u\r) \phi_i(u)^{3/2}\sqrt{\theta_i}.
\]
We have \(\calE(L_i)=E_i\).
\end{lem}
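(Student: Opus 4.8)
The plan is to verify the identity by Itô calculus: I will show that \(\log E_i\) and \(\log\calE(L_i)\) are continuous semimartingales with the same stochastic differential and the same value at \(u=0\), whence they are indistinguishable and so are their exponentials. Everything is computed under \(\P_i^{\beta_i}\), where \(\widehat{B}_i\) is a standard Brownian motion and, by Equation \eqref{tc.Bb}, \(\rho_i(u)=\log(\theta_i)+\widehat{B}_i(u)+\frac12 u+\log\phi_i(u)\) with \(\phi_i'(u)=-2\beta_i e^{2\rho_i(u)}<0\) and \(\phi_i(u)=(T_i^{\infty}-T_i(u))/T_i^{\infty}\in(0,1]\) on \([0,\infty)\), so all logarithms and quotients below are well defined. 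Abbreviating \(g_i(u):=\beta_i e^{2\rho_i(u)}/\phi_i(u)\), we get \(d\log\phi_i(u)=-2g_i(u)\,du\), hence \(d\rho_i(u)=d\widehat{B}_i(u)+(\frac12-2g_i(u))\,du\) and \(d\langle\rho_i\rangle_u=du\); moreover the integrand defining \(L_i\) is exactly \(a_i:=-\frac12+2g_i\), so \(dL_i=a_i\,d\widehat{B}_i\) and \(d\langle L_i\rangle_u=a_i^2\,du\).

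Next I would apply Itô's formula to the non-constant terms of \(\log E_i(u)=-\theta_i^2\beta_i+g_i(u)-\frac12\rho_i(u)+\frac18 u+\frac32\log\phi_i(u)+\frac12\log\theta_i\). The only term requiring genuine work is \(g_i\): writing \(g_i=\beta_i e^{2\rho_i}\phi_i^{-1}\), using \(d(e^{2\rho_i})=e^{2\rho_i}(2\,d\rho_i+2\,du)=e^{2\rho_i}(2\,d\widehat{B}_i+(3-4g_i)\,du)\) together with \(d(\phi_i^{-1})=-\phi_i^{-2}\phi_i'\,du\) (a finite-variation term, so no bracket contribution and no cross-variation), and substituting \(\phi_i'=-2\beta_i e^{2\rho_i}\) to re-express everything through \(g_i\), one gets \(dg_i=2g_i\,d\widehat{B}_i+(3g_i-2g_i^2)\,du\). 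The remaining contributions are immediate: \(d(-\frac12\rho_i)=-\frac12\,d\widehat{B}_i+(-\frac14+g_i)\,du\), \(d(\frac18 u)=\frac18\,du\) and \(d(\frac32\log\phi_i)=-3g_i\,du\). Summing gives \(d\log E_i=(2g_i-\frac12)\,d\widehat{B}_i+(g_i-2g_i^2-\frac18)\,du\).

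Finally I would compare with \(d\log\calE(L_i)=dL_i-\frac12\,d\langle L_i\rangle_u=a_i\,d\widehat{B}_i-\frac12 a_i^2\,du\): since \(a_i=2g_i-\frac12\) the martingale parts agree, and \(\frac12 a_i^2=2g_i^2-g_i+\frac18\) makes the drift parts agree as well. At \(u=0\) one has \(\rho_i(0)=\log\theta_i\) and \(\phi_i(0)=1\), so \(E_i(0)=\exp(-\theta_i^2\beta_i+\beta_i\theta_i^2-\frac12\log\theta_i)\sqrt{\theta_i}=1=\calE(L_i)(0)\); hence \(\log E_i\) and \(\log\calE(L_i)\) coincide and \(E_i=\calE(L_i)\). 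There is no conceptual obstacle; the only points that demand care are the Itô correction in \(d(e^{2\rho_i})\) and the systematic elimination of \(\phi_i'\) in favour of \(g_i\) so that the drift collapses exactly to \(g_i-2g_i^2-\frac18\), which is precisely what pins down the constants \(\frac32\), \(-\frac12\) and \(\frac18\) appearing in \(E_i\).
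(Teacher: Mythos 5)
Your proof is correct and uses essentially the same strategy as the paper: verify by It\^o calculus that $E_i$ and $\calE(L_i)$ solve the same SDE with initial value $1$. The only cosmetic difference is that you compare $d\log E_i$ with $d\log\calE(L_i)=dL_i-\tfrac12 d\langle L_i\rangle_u$, whereas the paper computes the stochastic logarithm $dE_i/E_i$ directly and matches it to $dL_i$; your abbreviation $g_i=\beta_i e^{2\rho_i}/\phi_i$ makes the bookkeeping somewhat cleaner but the computation is the same one.
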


\begin{proof}[Proof of Lemma \ref{lemEiu}]
Since \(E_i(0)=1\) almost surely, it suffices to show that \(\frac{dE_i(u)}{E_i(u)}=dL_i(u)\) for all \(u\geq 0\). Note that \(\rho_i(u) = \widehat{B}_i(u) + \frac{1}{2}u + \log (\phi_i(u))\), therefore
\[
E_i(u)=\exp\l(-\theta_i^2\beta_i +\beta_i\phi_i(u) e^{2\widehat{B}_i(u) + u} -\frac{1}{2}\widehat{B}_i(u)-\frac{1}{8}u\r) \phi_i(u)\sqrt{\theta_i}.
\]
By Ito's lemma, for \(u\geq 0\) we have
\begin{align*}
dE_i(u) &= \l(2\beta_i\phi_i(u)e^{2\widehat{B}_i(u)+u}-\frac{1}{2}\r)E_i(u) d\widehat{B}_i(u) \\
& \qquad + \frac{1}{2}\l(\l(2\beta_i\phi_i(u)e^{2\widehat{B}_i(u)+u}-\frac{1}{2}\r)^2+4\beta_i\phi_i(u)e^{2\widehat{B}_i(u)+u}\r)E_i(u) du \\
& \qquad + \l(\beta_i\l(\phi_i(u)+\phi_i'(u)\r)e^{2\widehat{B}_i(u)+u}-\frac{1}{8}+\frac{\phi_i'(u)}{\phi_i(u)}\r)E_i(u) du.
\end{align*}
Since \(\phi_i'(u)=-2\beta_i e^{2\rho_i(u)}=-2\beta_i\phi_i(u)^2 e^{2\widehat{B}_i(u)+u}\), we get
\begin{align*}
\frac{dE_i(u)}{E_i(u)} &= \l(-\frac{1}{2}+\frac{2\beta_i e^{2\rho_i(u)}}{\phi_i(u)}\r) d\widehat{B}_i(u) +\l( 2\beta_i^2\phi_i(u)^2 e^{4\widehat{B}_i(u)+2u} +\frac{1}{8}+\beta_i\phi_i(u)e^{2\widehat{B}_i(u)+u} \r. \\
&\l.\qquad + \beta_i\phi_i(u)e^{2\widehat{B}_i(u)+u} -2\beta_i^2\phi_i(u)^2 e^{4\widehat{B}_i(u)+2u} -\frac{1}{8} -2\beta_i\phi_i(u)e^{2\widehat{B}_i(u)+u}\r) du \\
&= dL_i(u).
\end{align*}

\end{proof}

Fix \(u\geq 0\), for any event \(A_u\in\calF_u^\rho=\sigma\l(\rho(v),0\leq v\leq u\r)\), we have
\begin{align*}
\P[A_u] & = \int \l(\bigotimes_{i\in V} \P_i^{\beta_i}[A_u]\r)\nu_V^{W,\theta,\eta} = \int \int_{A_u} \prod_{i\in V} \l(E_i(u)^{-1}d\widehat{\P}_i\r) \nu_V^{W,\theta,\eta}(d\beta) \\
& = \int_{A_u} D(u) d\widehat{\P}
\end{align*}
where \(\widehat{\P}=\bigotimes_{i\in V}\widehat{\P}_i\) and for \(u\geq 0\),
\[
D(u)=\int\l(\prod_{i\in V} E_i(u)^{-1}\r)\nu_V^{W,\theta,\eta}(d\beta).
\]
Let's compute \(D(u)\) now, and express it as an exponential martingale, so as we can apply Girsanov's theorem once again, and identify the distribution of \(\rho\) under \(\P\).

For \(u\geq 0\), we have
\begin{align*}
D(u)&=\int \exp\l( \sum_{i\in V}\l(\theta_i^2\beta_i-\frac{\beta_i e^{2\rho_i(u)}}{\phi_i(u)}+\frac{1}{2}\rho_i(u)-\frac{1}{8}u\r) \r)\frac{1}{\prod_{i\in V}\phi_i(u)^{3/2}\sqrt{\theta_i}} \\
		&\qquad \ind_{H_\beta>0}\l(\frac{2}{\pi}\r)^{|V|/2}\exp\l(-\frac{1}{2}\langle\theta,H_\beta\theta\rangle-\frac{1}{2}\langle\eta,(H_\beta)^{-1}\eta\rangle+\langle\eta,\theta\rangle\r)\frac{\prod_{i\in V}\theta_i d\beta_i}{\sqrt{|H_\beta|}}.
\end{align*}
In order to compute this integral, we will introduce a change of variables, and obtain an integral against the distribution \(\nu_V^{\widetilde{W}^{(u)}, \widetilde{\theta}^{(u)}, \widetilde\eta^{(u)}}\), where \(\widetilde{W}^{(u)}\), \(\widetilde{\theta}^{(u)}\) and \(\widetilde\eta^{(u)}\) are new parameters depending on the trajectory of \(\rho\) up to time \(u\) defined below.

Let us introduce the following notations : for \(u\geq 0\),
\[
\begin{cases}
\beta_i^{(u)} = \frac{1}{2T_i(u)} \text{ for } i\in V,\\
H^{(u)} = 2\beta^{(u)}-W, \\
K^{(u)} = T(u) H^{(u)} = \Id - T(u)W.
\end{cases}
\]
Now, we define the new following parameters
\[
\begin{cases}
\widetilde{W}^{(u)} = W (K^{(u)})^{-1} =W+W(H^{(u)})^{-1}W, \\
\widetilde\eta^{(u)} = \widetilde{W}^{(u)}T(u)\eta + \eta,\\
\widetilde\theta_i^{(u)} = e^{\rho_i(u)}\text{ for } i\in V
\end{cases}
\]
as well as the following associated quantities
\[
\begin{cases}
\widetilde{T}_i(u) = \frac{1}{2\beta_i}-T_i(u) =\frac{\phi_i(u)}{2\beta_i}\text{ for } i\in V, \\
\widetilde\beta_i^{(u)} = \frac{1}{2\widetilde{T}_i(u)} =\frac{\beta_i}{\phi_i(u)} \text{ for } i\in V,\\
\widetilde{H}^{(u)} = 2\widetilde\beta^{(u)}-\widetilde{W}^{(u)} ,\\
\widetilde{K}^{(u)} = \widetilde{T}(u) \widetilde{H}^{(u)} = \Id - \widetilde{T}(u)\widetilde{W}^{(u)}.
\end{cases} 
\]
Using these new notations, we can write
\begin{equation}\label{tech1}
\sum_{i\in V}\frac{\beta_i e^{2\rho_i(u)}}{\phi_i(u)}=\sum_{i\in V}(\widetilde\theta_i^{(u)})^2 \widetilde\beta_i^{(u)}=\frac{1}{2}\l\langle\widetilde\theta^{(u)}, \l(\widetilde{H}^{(u)}+\widetilde{W}^{(u)}\r) \widetilde\theta^{(u)}\r\rangle
\end{equation}
for \(u\geq 0\). The following technical lemma will allow us to express \(D(u)\) as an integral against \(\nu_V^{\widetilde{W}^{(u)}, \widetilde{\theta}^{(u)}, \widetilde\eta^{(u)}}\).

\begin{lemA}[Lemma 2 in \cite{SabZenEDS}]
\label{lem:tech}
\begin{enumerate}[(i)] For \(u\geq 0\), with the quantities defined just above, we have
\item \(K_{1/2\beta}=\widetilde{K}^{(u)}K^{(u)}\)
\item \(\widetilde\eta^{(u)} = T(u)^{-1}(H^{(u)})^{-1}\eta \)
\item \(\langle\widetilde\eta^{(u)},(\widetilde{H}^{(u)})^{-1}\widetilde\eta^{(u)}\rangle = \langle\eta,H_\beta^{-1}\eta\rangle - \langle\eta,(H^{(u)})^{-1}\eta\rangle.\)
\end{enumerate}
\end{lemA}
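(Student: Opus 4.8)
The plan is to treat all three identities as purely linear-algebraic consequences of the definitions. The ingredients I would use are: the factorizations \(K^{(u)}=T(u)H^{(u)}=\Id-T(u)W\), \(\widetilde K^{(u)}=\widetilde T(u)\widetilde H^{(u)}=\Id-\widetilde T(u)\widetilde W^{(u)}\) and \(K_{1/2\beta}=(1/2\beta)H_\beta=\Id-(1/2\beta)W\); the defining relation \(\widetilde W^{(u)}=W(K^{(u)})^{-1}\), equivalently \(\widetilde W^{(u)}K^{(u)}=W\); the identity \(H^{(u)}+W=T(u)^{-1}\) (since \(2\beta^{(u)}=T(u)^{-1}\)); and the additivity \(1/(2\beta_i)=T_i(u)+\widetilde T_i(u)\), which as diagonal matrices reads \((1/2\beta)=T(u)+\widetilde T(u)\). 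One should also note that \(H^{(u)}\), hence \((H^{(u)})^{-1}\), is symmetric, whence so is \(\widetilde W^{(u)}=W+W(H^{(u)})^{-1}W\) and therefore \(\widetilde H^{(u)}\). The organizational point to keep in mind is that the \(T\)-matrices and \(\beta\)-matrices are all diagonal and commute with one another, but none of them commutes with \(W\); so each item should be written as a single chain of equalities, paying attention to left versus right multiplication.

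For (i), I would simply expand, using the three displayed facts in turn: \(\widetilde K^{(u)}K^{(u)}=(\Id-\widetilde T(u)\widetilde W^{(u)})K^{(u)}=K^{(u)}-\widetilde T(u)(\widetilde W^{(u)}K^{(u)})=K^{(u)}-\widetilde T(u)W=\Id-(T(u)+\widetilde T(u))W=\Id-(1/2\beta)W=K_{1/2\beta}\). For (ii), starting from \(\widetilde\eta^{(u)}=\widetilde W^{(u)}T(u)\eta+\eta\), I would first simplify \(\widetilde W^{(u)}T(u)=W(K^{(u)})^{-1}T(u)=W(H^{(u)})^{-1}T(u)^{-1}T(u)=W(H^{(u)})^{-1}\), so that \(\widetilde\eta^{(u)}=(W(H^{(u)})^{-1}+\Id)\eta=(W+H^{(u)})(H^{(u)})^{-1}\eta=T(u)^{-1}(H^{(u)})^{-1}\eta\).

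Part (iii) is the one requiring a little more care. Using the symmetry noted above together with (ii), the left-hand side equals \(\big\langle\eta,\,(H^{(u)})^{-1}T(u)^{-1}(\widetilde H^{(u)})^{-1}T(u)^{-1}(H^{(u)})^{-1}\eta\big\rangle\), so it suffices to prove the matrix identity \((H^{(u)})^{-1}T(u)^{-1}(\widetilde H^{(u)})^{-1}T(u)^{-1}(H^{(u)})^{-1}=H_\beta^{-1}-(H^{(u)})^{-1}\). Inverting (i) and unfolding the factorizations, \(H_\beta^{-1}=(K^{(u)})^{-1}(\widetilde K^{(u)})^{-1}(1/2\beta)=(H^{(u)})^{-1}T(u)^{-1}(\widetilde H^{(u)})^{-1}\widetilde T(u)^{-1}(1/2\beta)\). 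Using \((1/2\beta)=T(u)+\widetilde T(u)\) to split \(\widetilde T(u)^{-1}(1/2\beta)=\widetilde T(u)^{-1}T(u)+\Id\), subtracting \((H^{(u)})^{-1}\), and factoring \((H^{(u)})^{-1}T(u)^{-1}(\widetilde H^{(u)})^{-1}\) out on the left, the desired identity reduces to \(\widetilde T(u)^{-1}T(u)+\Id-T(u)^{-1}(H^{(u)})^{-1}=\widetilde H^{(u)}T(u)\). Expanding \(\widetilde H^{(u)}T(u)=\widetilde T(u)^{-1}T(u)-\widetilde W^{(u)}T(u)\) cancels the leading term on each side, leaving \(\widetilde W^{(u)}T(u)=T(u)^{-1}(H^{(u)})^{-1}-\Id\); and this is exactly \(W(H^{(u)})^{-1}=(T(u)^{-1}-H^{(u)})(H^{(u)})^{-1}\), which holds by the formula \(\widetilde W^{(u)}T(u)=W(H^{(u)})^{-1}\) from part (ii) together with \(H^{(u)}+W=T(u)^{-1}\).

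The main ``obstacle'' is thus not conceptual but bookkeeping: because the diagonal \(T\)- and \(\beta\)-matrices do not commute with \(W\), one must resist cancelling factors out of order, and the cleanest safeguard is to present each of the three items as one uninterrupted computation. As a sanity check before writing it up I would verify everything in the scalar case \(|V|=1\), where the statement collapses to the one-dimensional bookkeeping behind Theorem \ref{thm:opp.dr}.
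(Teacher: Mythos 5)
Your argument is correct: all three identities follow by exactly the linear algebra you lay out, and I checked each step — in particular the key reduction in (iii), where after writing \(H_\beta^{-1}=(H^{(u)})^{-1}T(u)^{-1}(\widetilde H^{(u)})^{-1}\widetilde T(u)^{-1}(1/2\beta)\) and splitting \(\widetilde T(u)^{-1}(1/2\beta)=\widetilde T(u)^{-1}T(u)+\Id\), the required identity collapses to \(\widetilde W^{(u)}T(u)=T(u)^{-1}(H^{(u)})^{-1}-\Id\), which indeed follows from \(\widetilde W^{(u)}T(u)=W(H^{(u)})^{-1}\) and \(T(u)^{-1}-H^{(u)}=W\). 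Your preliminary remark that \(\widetilde W^{(u)}\) and hence \(\widetilde H^{(u)}\) are symmetric is exactly what justifies moving \(T(u)^{-1}(H^{(u)})^{-1}\) across the inner product, so nothing is glossed over. Note, however, that this paper does not actually reprove the lemma: it is imported as ``Lemma 2 in \cite{SabZenEDS}'' and used as a black box in the computation of \(D(u)\), so there is no in-text proof to compare against. Your write-up thus serves as a self-contained verification; it matches the kind of direct factorization argument one would expect from the definitions \(K^{(u)}=T(u)H^{(u)}\), \(\widetilde K^{(u)}=\widetilde T(u)\widetilde H^{(u)}\), \(\widetilde W^{(u)}K^{(u)}=W\), and \((1/2\beta)=T(u)+\widetilde T(u)\).
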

Using Lemma \ref{lem:tech} (i), we get that for \(u\geq 0\),
\[
H_\beta=2\beta K_{1/2\beta}=2\beta\widetilde{K}^{(u)}K^{(u)}=2\beta\widetilde{T}(u)\widetilde{H}^{(u)}K^{(u)},
\]
where \(2\beta_i\widetilde{T}_i(u)=1-\frac{\beta_i}{\beta_i^{(u)}}=\phi_i(u)\) for \(i\in V\). Therefore, we have
\begin{equation}\label{tech2}
\prod_{i\in V}\phi_i(u)^{3/2}\sqrt{|H_\beta|}=\prod_{i\in V}\phi_i(u)^2\sqrt{|\widetilde{H}^{(u)}|}\sqrt{|K^{(u)}|},
\end{equation}
where 
\[
\frac{d\widetilde\beta_i^{(u)}}{d\beta_i}=\frac{1}{\Big(1-\frac{\beta_i}{\beta_i^{(u)}}\Big)^2}=\frac{1}{\phi_i(u)^2}.
\]
Moreover, for all \(u\geq 0\) we have
\begin{equation}\label{tech3}
\ind_{H_\beta>0}=\ind_{H(u)>0}\ind_{\widetilde{H}(u)>0}.
\end{equation}
Combining equations \eqref{tech1}, \eqref{tech2} and \eqref{tech3}, as well as Lemma \ref{lem:tech} (iii), we finally obtain :
\begin{align*}
D(u) &= \l[ \int \ind_{\widetilde{H}^{(u)}>0}\l(\frac{2}{\pi}\r)^{|V|/2}\exp\l( -\frac{1}{2}\langle\widetilde{\theta}^{(u)},\widetilde{H}^{(u)}\widetilde{\theta}^{(u)}\rangle -\frac{1}{2}\langle\widetilde\eta^{(u)},(\widetilde{H}^{(u)})^{-1}\widetilde\eta^{(u)}\rangle +\langle\widetilde\eta^{(u)},\widetilde\theta^{(u)}\rangle\r) \r. \\
		&\qquad \l. \frac{\prod_{i\in V}\widetilde\theta_i^{(u)}}{\sqrt{|\widetilde{H}^{(u)}|}}\prod_{i\in V}\frac{d\widetilde\beta_i^{(u)}}{d\beta_i}d\beta_i\r] \ind_{H^{(u)}>0} \exp\l(\frac{1}{2}\langle\theta,W\theta\rangle+\langle\eta,\theta\rangle\r)\prod_{i\in V}\sqrt{\theta_i} \\
		&\qquad \exp\l(-\frac{1}{2}\langle\widetilde\theta^{(u)},\widetilde{W}^{(u)}\widetilde\theta^{(u)}\rangle -\frac{1}{2}\langle\eta,(H^{(u)})^{-1}\eta\rangle -\langle\widetilde\eta^{(u)},\widetilde\theta^{(u)}\rangle\r)\frac{\prod_{i\in V}\exp\l(\frac{1}{2}\rho_i(u)-\frac{1}{8}u\r)}{\sqrt{|K^{(u)}|}\prod_{i\in V}\widetilde\theta_i^{(u)}} \\
	&= \ind_{H^{(u)}>0} \exp\l(-\frac{1}{2}\langle\widetilde\theta^{(u)},\widetilde{W}^{(u)}\widetilde\theta^{(u)}\rangle -\frac{1}{2}\langle\eta,(H^{(u)})^{-1}\eta\rangle -\langle\widetilde\eta^{(u)},\widetilde\theta^{(u)}\rangle \r) \\
		&\qquad \frac{\prod_{i\in V}\exp\l(\frac{1}{2}\rho_i(u)-\frac{1}{8}u\r)}{\sqrt{|K^{(u)}|}}\exp\l(\frac{1}{2}\langle\theta,W\theta\rangle+\langle\eta,\theta\rangle\r)\prod_{i\in V}\sqrt{\theta_i},
\end{align*}
since the integral between brackets becomes
\[
\int \nu_V^{\widetilde{W}^{(u)},\widetilde\theta^{(u)},\widetilde\eta^{(u)}}(d\widetilde{\beta}^{(u)}) =1.
\]

We are ready to show that \(D\) is the exponential martingale associated with a certain \(\calF_u^\rho\)-martingale. By Ito's lemma, for \(u\geq 0\) we have
\begin{align*}
dD(u) &= \sum_{i\in V} \l( -(\widetilde{W}^{(u)} e^{\rho(u)})_i e^{\rho_i(u)} - \widetilde{\eta}_i^{(u)} e^{\rho_i(u)} -\frac{1}{2}\r) D(u) d\rho_i(u) \\
	&\qquad + \frac{1}{2} \sum_{i\in V} \Biggl( \l( -(\widetilde{W}^{(u)} e^{\rho(u)})_i e^{\rho_i(u)} - \widetilde{\eta}_i^{(u)} e^{\rho_i(u)} -\frac{1}{2}\r)^2  \\
	&\qquad \qquad +  \l( -(\widetilde{W}^{(u)} e^{\rho(u)})_i e^{\rho_i(u)} -\widetilde{W}^{(u)}_{i,i} e^{2\rho_i(u)} - \widetilde{\eta}_i^{(u)} e^{\rho_i(u)} \r) \Biggr) D(u) du \\
	& \qquad + \l(-\frac{1}{2}\langle e^{\rho(u)},\partial_u (\widetilde{W}^{(u)}) e^{\rho(u)}\rangle -\frac{1}{2}\langle \eta,\partial_u (H^{(u)})^{-1}\eta\rangle \r.  \\
	&\qquad\qquad \l. - \langle\partial_u\widetilde\eta^{(u)},e^{\rho(u)}\rangle  - \frac{|V|}{8} -\frac{1}{2}\frac{\partial_u|K^{(u)}|}{|K^{(u)}|} \r) D(u) du.
\end{align*}
Since \(H^{(u)}=2\beta^{(u)}-W=1/T(u)-W\), we have
\[
\partial_u (H^{(u)})^{-1}=(H^{(u)})^{-1}T(u)^{-1}\partial_u (T(u)) T(u)^{-1}(H^{(u)})^{-1},
\]
using Lemma \ref{lem:tech} (ii), we get
\begin{align*}
\langle\eta,\partial_u (H^{(u)})^{-1}\eta\rangle &= \langle T(u)^{-1}(H^{(u)})^{-1}\eta,e^{2\rho(u)}T(u)^{-1}(H^{(u)})^{-1}\eta\rangle \\
	&=\langle\widetilde\eta^{(u)},e^{2\rho(u)}\widetilde\eta^{(u)}\rangle = \sum_{i\in V}(\widetilde\eta_i^{(u)})^2 e^{2\rho_i(u)}.
\end{align*}
Moreover, \(\widetilde{W}^{(u)}=W+W (H^{(u)})^{-1} W\), therefore 
\begin{align*}
\langle e^{\rho(u)},\partial_u (\widetilde{W}^{(u)})e^{\rho(u)}\rangle &= \langle e^{\rho(u)},W (H^{(u)})^{-1}T(u)^{-1} e^{2\rho(u)} T(u)^{-1}(H^{(u)})^{-1} W e^{\rho(u)}\rangle \\
	&= \langle e^{\rho(u)}, \widetilde{W}^{(u)}e^{2\rho(u)}\widetilde{W}^{(u)} e^{\rho(u)}\rangle = \sum_{i\in V} (\widetilde{W}^{(u)}e^{\rho(u)})_i^2 e^{2\rho_i(u)},
\end{align*}
and \(\widetilde\eta^{(u)}=\widetilde{W}^{(u)}T(u)\eta +\eta\), thus
\begin{align*}
\langle\partial_u\widetilde\eta^{(u)},e^{\rho(u)}\rangle &= \langle \partial_u(\widetilde{W}^{(u)})T(u)\eta +\widetilde{W}^{(u)} \partial_u(T(u))\eta,e^{\rho(u)}\rangle \\
&= \langle \widetilde{W}^{(u)}e^{2\rho(u)}\widetilde{W}^{(u)}T(u)\eta +\widetilde{W}^{(u)} e^{2\rho(u)}\eta,e^{\rho(u)}\rangle\\
&= \langle \widetilde{W}^{(u)}e^{2\rho(u)}\widetilde\eta^{(u)},e^{\rho(u)}\rangle = \sum_{i\in V} \l(\widetilde{W}^{(u)}e^{\rho(u)}\r)_i\widetilde\eta_i^{(u)}e^{2\rho_i(u)}.
\end{align*}
Finally, we have
\begin{align*}
\partial_u |K^{(u)}| &= \Tr\big(|K^{(u)}|(K^{(u)})^{-1}\partial_u K^{(u)}\big)= -|K^{(u)}|\Tr\big(W (K^{(u)})^{-1} e^{2\rho(u)}\big)\\
 &= -|K^{(u)}| \sum_{i\in V}\widetilde{W}_{i,i}^{(u)}e^{2\rho_i(u)}.
\end{align*}

 Plug in the above computations in \(dD(u)\), we get that
\begin{align*}
\frac{dD(u)}{D(u)} &= \sum_{i\in V} \l( - \l(\widetilde{W}^{(u)}\l(e^{\rho(u)} +T(u)\eta\r) + \eta \r)_i e^{\rho_i(u)} -\frac{1}{2}\r) d\rho_i(u) \\
 & \;\; + \frac{1}{2}\sum_{i\in V} \l( (\widetilde{W}^{(u)}e^{\rho(u)})_i^2 e^{2\rho_i(u)} + (\widetilde\eta_i^{(u)})^2 e^{2\rho_i(u)} + \frac{1}{4}  + 2\l(\widetilde{W}^{(u)}e^{\rho(u)}\r)_i\widetilde\eta_i^{(u)}e^{2\rho_i(u)}\r. \\
 & \qquad + (\widetilde{W}^{(u)}e^{\rho(u)})_i e^{\rho_i(u)} + \widetilde{\eta}_i^{(u)}e^{\rho_i(u)}  -(\widetilde{W}^{(u)}e^{\rho(u)})_i e^{\rho_i(u)} -\widetilde{W}_{i,i}^{(u)}e^{2\rho_i(u)} - \widetilde{\eta}_i^{(u)}e^{\rho_i(u)} \\
 &\qquad \l. -(\widetilde{W}^{(u)}e^{\rho(u)})_i^2 e^{2\rho_i(u)} - (\widetilde\eta_i^{(u)})^2 e^{2\rho_i(u)} - 2\l(\widetilde{W}^{(u)}e^{\rho(u)}\r)_i\widetilde\eta_i^{(u)}e^{2\rho_i(u)} -\frac{1}{4} + \widetilde{W}_{i,i}^{(u)}e^{2\rho_i(u)} \r) du \\
 &= \sum_{i\in V} \l( - \l(\widetilde{W}^{(u)}\l(e^{\rho(u)} +T(u)\eta\r) + \eta \r)_i e^{\rho_i(u)} -\frac{1}{2}\r) d\rho_i(u)  = d \widetilde{L}(u),
\end{align*}
where for \(i\in V\) and \(u\geq 0\),
\[
\widetilde{L}_i(u) =\int_0^u \l(-\frac{1}{2} - \l(\widetilde{W}(u)\l(e^{\rho(u)} +T(u)\eta\r) + \eta \r)_i e^{\rho_i(u)}\r)d\rho_i(u).
\]
Therefore, \(D\) is the exponential martingale associated with \(\widetilde{L}\).

Recall that for \(u\geq 0\) and any event \(A_u\in\calF_u^\rho=\sigma\l(\rho(v),0\leq v\leq u\r)\), we have
\[
\P[A_u] = \int_{A_u} D(u) d\widehat{\P},
\]
\textit{i.e.} \(\P\) is such that
\[
\E\l[\frac{d\P}{d\widehat\P} \middle| \calF_u^\rho\r]=\calE(\widetilde{L})(u)
\]
for all \(u\geq 0\). Moreover, \(\widehat{\P}=\bigotimes_{i\in V}\widehat{\P}_i\), therefore \(\rho\) is a \(|V|\)-dimensional standard Brownian motion under \(\widehat\P\). By Girsanov's theorem, the process \(\widetilde{B}(u)=\rho(u)-\langle\rho,\widetilde{L}\rangle_u\) is a standard Brownian motion under \(\P\). In other words, under \(\P\), the process \(\rho\) verifies the following SDE : for all \(i\in V\) and \(u\geq 0\), 
\[
d\rho_i(u)=d\widetilde{B}_i(u)-\frac{1}{2}du - \l(\widetilde{W}(u)(e^{\rho(u)}+T(u)\eta)+\eta\r)_i e^{\rho_i(u)}du.
\]
\end{proof}
\subsection{Time change on the conditional process}
\begin{proof}[Proof of Theorem \ref{thm:md.opp.dr}]
Let \((\widetilde{B}_i)_{i\in V}\) be a \(|V|\)-dimensional standard Brownian motion. According to Theorem \ref{thm:SDE.rho}, there exists a \(|V|\)-dimensional standard Brownian motion \((B_i)_{i\in V}\) such that, if \((X_i)_{i\in V}\) is the solution of \eqref{SDE:X} with the Brownian motion \(B\), and \(T_i\) is the inverse function of \(U_i:t\mapsto\int_0^t\frac{ds}{X_i(s)^2}\) for all \(i\in V\), then \((\rho, T)\) is the solution of \eqref{SDE:rho} with the Brownian motion \(\widetilde{B}\), where \(\rho_i(u)=\log \big( X_i(T_i(u)) \big)\) for \(u\geq 0\). 

Therefore, according to Lemma \ref{lem:lim.u}, we have a.s. for all \(i\in V\) :
\[
\lim_{u\to +\infty}T_i(u)=\tau_i,
\]
where \(\tau_i\) is the hitting time of \(0\) by \(X_i\). In this coupling between \((\rho,T)\) and \(X\), it is natural to prefer the notation \(T^{\infty}=\tau\). Moreover, we can apply Theorem \ref{thm:SDE.X} (ii) to \(X\) : the vector \(\l(\frac{1}{2T^{\infty}_i}\r)_{i\in V}\) is distributed according to \(\nu_V^{W,\theta,\eta}\), and conditionally on \((T^{\infty}_i)_{i\in V}\), the trajectories \((X_i(t))_{0\leq t\leq T^{\infty}_i}\) are independent three-dimensional Bessel bridges from \(\theta_i\) to \(0\) respectively.

Since \(\rho_i(u)=\log\big(X_i(T_i(u))\big)\) for \(u\geq 0\), conditionally on \((T^{\infty}_i)_{i\in V}\), the processes \((\rho_i,T_i)_{i\in V}\) are independent, and their distribution is given by applying the time change from Lamperti's relation to a three-dimensional Bessel bridge. This time-change was already realized in the proof of Lemma \ref{lem:lim.u}, see Equation \eqref{tc.Bb}, and the result is as follows : conditionally on \((T^{\infty}_i)_{i\in V}\), for all \(i\in V\), there exists a standard Brownian motion \(B^*_i\) such that for \(u\geq 0\).
\[
\l\{\begin{aligned}
\rho_i(u)&=\log(\theta_i)+B^*_i(u)+\frac{1}{2}u+\log\l(\frac{T^{\infty}_i-T_i(u)}{T^{\infty}_i}\r)\\
T_i(u)&=\int_0^u e^{2\rho_i(w)}dw.
\end{aligned}\r.
\]
\end{proof}

\section{Conditioning in Lamperti time scale : Proof of Theorem \ref{thm:md.matyorcond}, \ref{thm:intertwinnings} and \ref{thm:equalities_in_law}}
\subsection{Proof of Theorem \ref{thm:md.matyorcond}}
In this section, we prove Theorem \ref{thm:md.matyorcond} by using the multidimensional opposite drift Theorem obtained in Theorem \ref{thm:md.opp.dr}. Besides, our proof follows the same computations as in the one dimensional case which is treated by Matsumoto and Yor in \cite{matyor2}.

\begin{proof}[Proof of Theorem \ref{thm:md.matyorcond}]
By \((ii)\) in Theorem \ref{thm:md.opp.dr}, there exists a \(|V|\)-dimensional Brownian motion \(B^*\) which is independent of \(T^{\infty}\) such that for every \(u\geq 0\),
\[\rho(u)=\log(\theta)+B^*(u)+\frac{1}{2}u+\log\left(\frac{T^{\infty}-T(u)}{T^{\infty}}\right).\]
For every \(i\in V\) and for every \(u\geq0\), let us define
\[e^*_i(u):=e^{B^*_i(u)+u/2}.\]
Therefore, for every \(i\in V\), for every \(u\geq 0\),
\begin{equation}
 \frac{T_i^{\infty}}{T_i^{\infty}-T_i(u)}\frac{e^{\rho_i(u)}}{\theta_i}=e^*_i(u).\label{matyor1}
 \end{equation}
Furthermore, for every \(i\in V\) and for every \(u\geq 0\), let us define \[T^*_i(u)=\int_0^ue_i^*(v)^2dv.\]
Integrating the square of identity \eqref{matyor1}, we have, for every \(i\in V\) and for every \(u\geq 0\),
\begin{align*}
\theta_i^2T_i^*(u)&=(T_i^{\infty})^2\int_0^u\frac{e^{2\rho_i(v)}}{(T_i^{\infty}-T_i(v))^2}dv\\
&=(T_i^{\infty})^2\left[\frac{1}{T_i^{\infty}-T_i(v)} \right]_0^u\\
&=(T_i^{\infty})^2\left(\frac{1}{T_i^{\infty}-T_i(u)} -\frac{1}{T_i^{\infty}}\right).
\end{align*}
Therefore, almost surely, for every \(i\in V\) and for every \(u\geq0\),
\begin{equation}\label{matyor2}
\theta_i^2T_i^*(u)=\frac{T_i^{\infty}T_i(u)}{T_i^{\infty}-T_i(u)}.
\end{equation}
For every \(i\in V\) and for every \(u\geq 0\), identity \eqref{matyor2} yields
\begin{equation}
\frac{1}{T_i^{\infty}}+\frac{1}{\theta_i^2T^*_i(u)}=\frac{1}{T_i(u)}.\label{matyor3}
\end{equation}
Let us differentiate \eqref{matyor3}. This gives that for every \(i\in V\) and for every \(u\geq 0\),

\begin{equation}
\frac{1}{\theta_i^2}\frac{e_i^*(u)^2}{T_i^*(u)^2}=\frac{e^{2\rho_i(u)}}{T_i(u)^2}.\label{matyor4}
\end{equation}
For every \(i\in V\) and for every \(u\geq 0\), we denote \(Z_i^*(u)=\frac{T_i^*(u)}{e_i^*(u)}\). Thus, by \eqref{matyor4}, for every \(i\in V\), almost surely
\begin{equation}
(\theta_i Z_i^*(u))_{u\geq 0}=(Z_i(u))_{u\geq 0}.\label{matyor5}
\end{equation}
The components of \(Z^*\) are independent. Moreover, by Theorem \ref{thm:matyorcond} with \(\mu=1/2\), for every \(i\in V\), \((Z_i^*(u))_{u\geq 0}\) is solution of 
\[dZ_i^*(u)=Z_i^*(u)d\widehat{B}_i(u)+\frac{K_{3/2}}{K_{1/2}}\left(\frac{1}{Z_i^*(u)}\right)du\]
for some brownian motion \(\widehat{B}\) which is different from \(B^*\).
However, it is not difficult to see that for every \(x\in\R\), \(\frac{K_{3/2}(x)}{K_{1/2}(x)}=1+\frac{1}{x}\). Therefore, for every \(i\in V\), \((Z_i^*(u))_{u\geq 0}\) is solution of the SDE:
\[dZ_i^*(u)=Z_i^*(u)d\widehat{B}_i(u)+(1+Z_i^*(u))du.\]

Together with  \eqref{matyor5}, this yields (ii) of Theorem \ref{thm:md.matyorcond}.
By \eqref{matyor3} and \eqref{matyor5}, we know that almost surely,
\begin{equation}
\left(Z_i(u),\frac{1}{T_i(v)}\right)_{u\geq 0,v\geq0,i\in V}=\left( \theta_iZ^*_i(u),\frac{1}{T_i^{\infty}}+\frac{1}{ \theta_i^2T_i^*(v)}\right)_{u\geq 0,v\geq 0,i\in V}.\label{matyor6}
\end{equation}
Remark that \(T^{\infty}\) is independent of \((Z^*(u))_{u\geq 0}\) because \((Z^*(u))_{u\geq 0}\) depends only on \(B^*\). Therefore, making \(v\) go to infinity in \eqref{matyor6}, we get that \((Z(u))_{u\geq 0}\) is independent of \(T^{\infty}\) which is (iii) in Theorem \ref{thm:md.matyorcond}.

Now, let \((\lambda_i)_{i\in V}\in\R_+^V\). Let \(z\in(\R_+^*)^V\). Recall that for every \(i\in V\) and for every \(u\geq 0 \), \(\beta_i(u)=\frac{1}{2T_i(u)}\). Let us look at the Laplace transform of \(\beta(u)\), conditionally on \(Z(u)=z\). By \eqref{matyor6}, we get
\begin{align*}
\E\left[\exp\left(-\sum\limits_{i\in V}\lambda_i\beta_i(u) \right)\Bigg|\mathcal{Z}_u,Z_u=z \right]=&\\
&\hspace{-4 cm}\E\left[\exp\left(-\sum\limits_{i\in V}\frac{\lambda_i}{2T_i^{\infty}} \right)\times \exp\left(-\sum\limits_{i\in V} \frac{\lambda_i}{2\theta_i^2T^*_i(u)}\right) \Bigg|\mathcal{Z}_u,Z(u)=z \right].
\end{align*}
By (i) in Theorem \ref{thm:md.opp.dr}, we know that the random vector \(1/(2T^{\infty})\) is distributed according to \(\nu_V^{W,\theta,\eta}\). Moreover, we know that \(T^{\infty}\) is independent of \(B^*\), that is, of \((Z^*,T^*)\). By \eqref{matyor5}, this implies that  \(T^{\infty}\) is independent of \((Z,T^*)\). Therefore, by Proposition \ref{prop:nuwtheta},
\begin{align*}
\E\left[\exp\left(-\sum\limits_{i\in V}\lambda_i\beta_i(u) \right)\Bigg|\mathcal{Z}_u,Z_u=z \right]=&\\
 &\hspace{-6.8 cm}e^{-\frac{1}{2}\langle\sqrt{\theta^2+\lambda},W\sqrt{\theta^2+\lambda}\rangle+\frac{1}{2}\langle\theta,W\theta\rangle+\langle\eta,\theta-\sqrt{\theta^2+\lambda}\rangle}\hspace{-0.1 cm}\times \hspace{-0.1 cm} \prod\limits_{i\in V}\frac{\theta_i}{\sqrt{\theta_i^2+\lambda_i}}\hspace{-0.1 cm}\times \hspace{-0.1 cm}\E\left[\exp\left(-\sum\limits_{i\in V} \frac{\lambda_i}{2\theta_i^2T^*_i(u)}\right) \Bigg|\mathcal{Z}_u,Z(u)=z \right].
\end{align*}
Besides, by \eqref{matyor5}, \(\theta Z^*=Z\). Therefore,
\begin{align}
\E\left[\exp\left(-\sum\limits_{i\in V}\lambda_i\beta_i(u) \right)\Bigg|\mathcal{Z}_u,Z_u=z \right]=&e^{-\frac{1}{2}\langle\sqrt{\theta^2+\lambda},W\sqrt{\theta^2+\lambda}\rangle+\frac{1}{2}\langle\theta,W\theta\rangle+\langle\eta,\theta-\sqrt{\theta^2+\lambda}\rangle}\times \prod\limits_{i\in V}\frac{\theta_i}{\sqrt{\theta_i^2+\lambda_i}}\nonumber\\ 
&\times \E\left[\exp\left(-\sum\limits_{i\in V}\frac{\lambda_i}{2\theta_i^2T^*_i(u)}\right) \Bigg|\mathcal{Z}_u,Z_i(^*u)=z_i /\theta_i\right].\label{matyor6bis}
\end{align}
Furthermore, we know that \((Z^*_i,T^*_i)_{i\in V}\) is site by site independent because for every \(i\in V\), \((Z_i^*,T_i^*)\) is a functional of \(B_i^*\). Consequently, we get that
\begin{align}
\E\left[\exp\left(-\sum\limits_{i\in V}\lambda_i\beta_i(u) \right)\Bigg|\mathcal{Z}_u,Z_u=z \right]&=e^{-\frac{1}{2}\langle\sqrt{\theta^2+\lambda},W\sqrt{\theta^2+\lambda}\rangle+\frac{1}{2}\langle\theta,W\theta\rangle+\langle\eta,\theta-\sqrt{\theta^2+\lambda}\rangle}\times \prod\limits_{i\in V}\frac{\theta_i}{\sqrt{\theta_i^2+\lambda_i}}\nonumber\\ 
&\times \prod\limits_{i\in V} \E\left[\exp\left(-\frac{\lambda_i}{2\theta_i^2T^*_i(u)}\right) \Bigg|\mathcal{Z}_u,Z_i(^*u)=z_i/\theta_i \right].\label{matyor7}
\end{align}
Now, for every \(i\in V\), Let's compute
\begin{align*}
\E\left[\exp\left(\frac{\lambda_i}{2\theta_i^2T^*_i(u)}\right) \Bigg|\mathcal{Z}_u,Z(u)=z \right] &=\E\left[\exp\left(- \frac{\lambda_i}{2\theta_i^2T^*_i(u)}\right)\Bigg|\mathcal{Z}_u,Z_i^*(u)=z_i/\theta_i \right].
\end{align*}
Recall that \(IG(\mu,r)\) designates an Inverse Gaussian distribution with parameter \((\mu,r)\).
By (iii) in Theorem \ref{thm:matyorcond}, the distribution of \(e^*_i(u)\) conditionally on \(Z_i^*(u)=z_i/\theta_i\) is
\[\mathcal{L}\left(e_i^*(u)|\mathcal{Z}_u,Z_i^*(u)=z_i/\theta_i \right)=\frac{1}{IG\left(1,\frac{\theta_i}{z_i}\right)}.\]
Moreover, for every \(t,\mu,r\in(0,+\infty)\), 
\[t\times IG(\mu,r)\overset{law}=IG(t\mu,tr).\]
Therefore, it holds that
\begin{align}
\mathcal{L}\left(\frac{1}{2\theta_i^2T_i^*(u)}\Bigg|\mathcal{Z}_u,Z_i^*(u)=z_i/\theta_i \right)=IG\left(\frac{1}{2\theta_iz_i},\frac{1}{2z_i^2}\right).\label{matyor7bis}
\end{align}
It is well known that the Laplace transform of an Inverse Gaussian random variable \(X\) with parameters \((\mu,r)\) is given by \[\E\left(e^{-tX} \right)=\exp\left(\frac{r}{\mu}\left(1-\sqrt{1+\frac{2\mu^2t}{r}} \right) \right).\] Consequently,
\begin{align*}
\E\left[\exp\left(-\frac{\lambda_i}{2\theta_i^2T^*_i(u)}\right) \Bigg|\mathcal{Z}_u,Z(u)=z \right] &=\exp\left(\frac{1}{z_i}\left(\theta_i-\sqrt{\theta_i^2+\lambda_i}\right)\right).
\end{align*}
Combining this with \eqref{matyor7} yields 
\begin{align}
\E\left[\exp\left(-\sum\limits_{i\in V}\lambda_i\beta_i(u) \right)\Bigg|\mathcal{Z}_u,Z_u=z \right]=&e^{-\frac{1}{2}\langle\sqrt{\theta^2+\lambda},W\sqrt{\theta^2+\lambda}\rangle+\frac{1}{2}\langle\theta,W\theta\rangle+\langle\eta+\frac{1}{z},\theta-\sqrt{\theta^2+\lambda}\rangle}\times \prod\limits_{i\in V}\frac{\theta_i}{\sqrt{\theta_i^2+\lambda_i}}.\nonumber
\end{align}
This is exactly the Laplace Transform of \(\nu_V^{W,\theta,\eta+1/z}\). This proves (iv) in Theorem \ref{thm:md.matyorcond}. Remark that (iv) implies directly (i). 
\end{proof}
\subsection{Proof of Theorem \ref{thm:intertwinnings}}
Let us prove the link between the solution \((\rho,T)\) of  \(E_V^{W,\theta,\eta}(\rho)\) and \(Z\) via intertwinnings.
\begin{proof}[Proof of Theorem \ref{thm:intertwinnings}]
Let \(v,u\in\R_+\) such that \(v<u\). Let \(f\) be a measurable function from \((\R^V)^2\) into \(\R_+\). 
On the one hand, it holds that,
\begin{align*}
\E\left[f(\rho(u),T(u))|\mathcal{Z}_v \right]&=\E\left[\E\left[f(\rho(u),T(u))|\sigma(\rho(w),T(w),w\leq v)\right]|\mathcal{Z}_v\right]\\
&=\E\left[P_{u-v}f(\rho(v),T(v))|\mathcal{Z}_v\right].
\end{align*}
Consequently, by (iv) in Theorem \ref{thm:md.matyorcond},
\begin{align*}
\E\left[f(\rho(u),T(u))|\mathcal{Z}_v \right]&= \E\left[P_{u-v}f\left( \Big(-\ln({2\beta_i(v)Z_i(v)}) \Big)_{i\in V},\left( \frac{1}{2\beta_i(v)}\right)_{i\in V} \right)\Bigg|\mathcal{Z}_v\right]\\
&=KP_{u-v}f(Z(v)).
\end{align*}
On the other hand, by (iv) in Theorem \ref{thm:md.matyorcond} again remark that,
\begin{align*}
\E\left[f(\rho(u),T(u))|\mathcal{Z}_v \right]&=\E\left[\E\left[f(\rho(u),T(u))|\mathcal{Z}_u\right]|\mathcal{Z}_v\right]\\
&=\E\left[Kf(Z(u))|\mathcal{Z}_v\right]\\
&=Q_{u-v}Kf(Z_v).
\end{align*}
Therefore, almost surely, \[Q_{u-v}Kf(Z(v))=KP_{u-v}f(Z(v)).\]

\end{proof}

\subsection{Proof of Theorem \ref{thm:equalities_in_law}}
In the proof of Theorem \ref{thm:equalities_in_law}, we use the same notation as in the beginning of the proof of Theorem \ref{thm:md.matyorcond}. For example, \(B^*\), \(e^*\), \(T^*\) and \(Z^*\) are defined in the same way as before.
\begin{proof}[Proof of Theorem \ref{thm:equalities_in_law}]
Recall that, by \eqref{matyor6}, 
\begin{equation}
\left(Z_i(u),\frac{1}{T_i(v)}\right)_{u\geq 0,v\geq0,i\in V}=\left( \theta_iZ^*_i(u),\frac{1}{T_i^{\infty}}+\frac{1}{ \theta_i^2T_i^*(v)}\right)_{u\geq 0,v\geq 0,i\in V}.\nonumber
\end{equation}
Let \(u\geq 0\). By \eqref{matyor6} and the fact that for every \(i\in V\), \(T_i(u)=Z_i(u)e^{\rho_i(u)}\) and \(T_i^*(u)=Z_i^*(u)e_i^*(u)\), we get
\begin{equation}
\frac{1}{e_i^*(u)}=\frac{\theta_i}{e^{\rho_i(u)}}-\frac{\theta_iZ_i(u)}{T_i^{\infty}}.
\label{matyor6bis}
\end{equation} 
Following \cite{matsumoto2003interpretation}, remark that by \eqref{matyor6bis} it holds that
\begin{align*}
\theta_i^2\frac{Z_i^*(u)}{e_i^*(u)}+\frac{T_i^{\infty}}{e_i^*(u)^2}&=\theta_i^2\frac{Z_i(u)}{\theta_i}\left( \frac{\theta_i}{e^{\rho_i(u)}}-\frac{\theta_iZ_i(u)}{T_i^{\infty}}\right)+T_i^{\infty}\left(\frac{\theta_i}{e^{\rho_i(u)}}-\frac{\theta_iZ_i(u)}{T_i^{\infty}} \right)^2\\
&=\theta_i^2\left(\frac{T_i^{\infty}}{e^{2\rho_i(u)}} -\frac{Z_i(u)}{e^{\rho_i(u)}}\right).
\end{align*}
However, as \(T_i(u)=Z_i(u)e^{\rho_i(u)}\), we get
\begin{align}
\theta_i^2\frac{Z_i^*(u)}{e_i^*(u)}+\frac{T_i^{\infty}}{e_i^*(u)^2}&=\theta_i^2 \frac{T_i^{\infty}-T_i(u)}{e^{2\rho_i(u)}}\nonumber\\
&=\theta_i^2\int_0^{+\infty}e^{2\rho_i(u+v)-2\rho_i(u)}dv.\label{matyor8}
\end{align}
Therefore, combining \eqref{matyor6} and \eqref{matyor8} yields
\begin{equation}
\left(\frac{1}{T_i(u)},\int_0^{+\infty}e^{2\rho_i(u+v)-2\rho_i(u)}dv\right)_{i\in V}=\left( \frac{1}{T_i^{\infty}}+\frac{1}{ \theta_i^2T_i^*(u)},\frac{Z_i^*(u)}{e_i^*(u)}+\frac{T_i^{\infty}}{\theta_i^2e_i^*(u)^2}\right)_{i\in V}.\label{matyor9}
\end{equation}
Now, the idea is to condition both sides of \eqref{matyor9} on \(Z(u)=z\) and to use (iv) in Theorem \ref{thm:md.matyorcond}. Let us begin with the left-hand side. This term can be rewritten as
\begin{equation}
\left(\frac{1}{T_i(u)},\frac{Z_i(u)^2}{T_i(u)^2}\int_0^{+\infty}e^{2\rho_i(u+v)}dv\right)_{i\in V}.\label{matyor10}
\end{equation}
First, let us condition on \(\sigma(\rho(v),v\leq u)\). By Theorem \ref{thm:Markov.X} in the exponential scale, conditionally on \(\sigma(\rho(v),v\leq u)\), \((\rho(u+v))_{v\geq 0}\) is distributed as a solution of \(E_V^{\widetilde{W}(u),\widetilde{\theta}(u),\widetilde{\eta}(u)}\) where \(\widetilde{W}(u)=W K_{T(u)}^{-1}\), \(\widetilde{\eta}(u)=\eta+W K_{T(u)}^{-1}T(u)\eta\) and for every \(i\in V\), \(\widetilde{\theta}_i(u)=e^{\rho_i(u)}=1/(2\beta_i(u)Z_i(u))\). Thus, by (i) in Theorem \ref{thm:md.opp.dr}, conditionally on \(\sigma(\rho(v),v\leq u)\), 
\[ \left( \left(2\int_0^{+\infty}e^{2\rho_i(u+v)}dv\right)^{-1}\right)_{i\in V} \]
is distributed as \(\alpha\sim\nu_V^{\widetilde{W}(u),\widetilde{\theta}(u),\widetilde{\eta}(u)}\).
Recall that, by \((iv)\) in Theorem \ref{thm:md.matyorcond}, conditionally on \(\mathcal{Z}_u\), \(1/(2T(u))\) is distributed as \(\beta\sim\nu_V^{W,\theta,\eta}\). Therefore, if we condition \eqref{matyor10}, first on \(\sigma(\rho(v),v\leq u)\) and then on \(Z(u)=z\), we obtain 
\[\left(\left(2\beta_i\right)_{i\in V}, \left(\frac{z_i^2(2\beta_i)^2}{2\alpha_i}\right)_{i\in V}\right)\]
where \(\beta\sim \nu_V^{W,\theta,\eta+1/z}\) and conditionally on \(\beta\), \(\alpha\sim\nu_V^{\widetilde{W},\widetilde{\theta},\widetilde{\eta}}\).

Now, let us look at the right-hand side in \eqref{matyor9}. This right-hand side can be rewritten as
\[\left( \frac{1}{T_i^{\infty}}+\frac{1}{ \theta_i^2T_i^*(u)},\frac{Z_i^*(u)^2}{T_i^*(u)}+\frac{T_i^{\infty}Z_i^*(u)^2}{\theta_i^2T_i^*(u)^2}\right)_{i\in V}.\]
We know that \(T^{\infty}\) is independent of \(B^*\), thus of  \((Z^*,T^*)\). By \eqref{matyor5}, this implies that \(T^{\infty}\) is independent of \((Z,T^*)\). Moreover, by (i) in Theorem \ref{thm:md.opp.dr}, \(\delta=1/(2T^{\infty})\) is distributed as \(\nu_V^{W,\theta,\eta}\). Thus, conditionally on \((Z_i(u)=z_i)_{i\in V}\), that is, \((Z_i^*(u)=z_i/\theta_i)_{i\in V}\), we have
\begin{align}
\left( \frac{1}{T_i^{\infty}}+\frac{1}{ \theta_i^2T_i^*(u)},\frac{Z_i^*(u)^2}{T_i^*(u)}+\frac{T_i^{\infty}Z_i^*(u)^2}{\theta_i^2T_i^*(u)^2}\right)_{i\in V}&\overset{law}=\left(2\delta_i+\frac{1}{ \theta_i^2T_i^*(u)},\frac{z_i^2}{\theta_i^2T_i^*(u)} +\frac{z_i^2}{2\delta_i(\theta_i^2T_i^*(u))^2}\right)_{i\in V}
\end{align}
where \((T_i^*(u))_{i\in V}\) is independent of \((\delta_i)_{i\in V}\). 
Moreover, by \eqref{matyor7bis}, conditionally on \((Z_i^*(u)=z_i/\theta_i)_{i\in V}\), \((T^*_i(u))_{i\in V}\) are independent site by site and for every \(i\in V\), \(1/(\theta_i^2T_i^*(u))\) is distributed as \[IG(1/(\theta_iz_i),1/z_i^2).\]
\end{proof}

\end{document}